\documentclass[11pt,twoside,a4paper]{article}
\title{} \author{} \date{}
\usepackage{latexsym,amssymb,times}
\input amssym.def
\newtheorem{te}{Theorem}[section]
\newtheorem{prop}[te]{Proposition}
\newtheorem{cor}[te]{Corollary}
\newtheorem{fac}[te]{Fact}
\newtheorem{cla}[te]{Claim}
\newtheorem{lem}[te]{Lemma}

\newtheorem{rem}[te]{Remark}
\newtheorem{ex}[te]{Example}
\newtheorem{que}[te]{Question}
\def\dok{\noindent{\bf Proof. }}
\def\kdok{\hfill $\Box$ \par \vspace*{2mm} }
\def\a{\alpha}

\def\f{\varphi}
\def\p{\psi}
\def\o{\omega}
\def\k{\kappa}
\def\l{\lambda}

\def\r{\rho}
\def\s{\sigma}

\def\t{\tau}

\def\ve{\varepsilon}

\def\S{{\mathbb S}}
\def\T{{\mathbb T}}

\def\B{{\mathbb B}}
\def\N{{\mathbb N}}
\def\X{{\mathbb X}}
\def\Y{{\mathbb Y}}
\def\Z{{\mathbb Z}}

\def\A{{\mathbb A}}

\def\BL{{\mathbb L}}

\def\BK{{\mathbb K}}
\def\BI{{\mathbb I}}

\def\BJ{{\mathbb J}}

\def\CB{{\mathcal B}}

\def\CC{{\mathcal C}}
\def\CD{{\mathcal D}}
\def\CT{{\mathcal T}}

\def\M{{\mathcal M}}

\def\c{{\mathfrak{c}}}
\def\la{\langle}
\def\ra{\rangle}

\def\Sym{\mathop{\rm Sym}\nolimits}
\def\Sent{\mathop{\rm Sent}\nolimits}
\def\Form{\mathop{\rm Form}\nolimits}
\def\Mod{\mathop{\rm Mod}\nolimits}
\def\Ord{\mathop{\rm Ord}\nolimits}

\def\Th{\mathop{\rm Th}\nolimits}
\def\Lit{\mathop{\rm Lit}\nolimits}

\def\bcd{\dot{\bigcup}}
\def\du{\mathrel{\dot{\cup}}}
\def\rr{\mathrel{\rho}}
\def\Con{\mathop{\rm Con}\nolimits}
\begin{document}
\thispagestyle{plain}
\begin{center}
           {\large \bf \uppercase{Vaught's conjecture for sums of products of rooted trees}}
\end{center}
\begin{center}
{\bf Milo\v s S.\ Kurili\'c}\footnote{Department of Mathematics and Informatics, Faculty of Sciences, University of Novi Sad,
                                      Trg Dositeja Obradovi\'ca 4, 21000 Novi Sad, Serbia.
                                      e-mail: milos@dmi.uns.ac.rs}
\end{center}
\begin{abstract}
\noindent
For a class ${\mathcal C}$ of partial orders $\langle  {\mathcal C}\rangle _\Pi$ denotes its closure under finite direct products and isomorphism
and $\langle \langle  {\mathcal C}\rangle _\Pi\rangle _\Sigma$ denotes the closure of $\langle  {\mathcal C}\rangle _\Pi$ under lexicographic sums indexed by finite posets ${\mathbb I}$.
$\langle \langle  {\mathcal C}\rangle _\Pi\rangle _\Sigma$ is the class of partial orders isomorphic to sums ${\mathbb X}=\sum _{{\mathbb I}}\prod _{j<m_i}{\mathbb X}_{i,j}$,
where the factors ${\mathbb X}_{i,j}$ belong to ${\mathcal C}$.
We confirm Vaught's conjecture for each partial order ${\mathbb X}$
from the closure $\langle \langle  {\mathcal C}^{\rm rt}\rangle _\Pi\rangle _\Sigma$ of the class ${\mathcal C}^{\rm rt}$ of rooted trees.
In addition, defining ${\mathcal T} :=\mathop{\rm Th}\nolimits ({\mathbb X})$ and ${\mathcal T}_{i,j} :=\mathop{\rm Th}\nolimits ({\mathbb X}_{i,j})$, for all indices,
we show that: (a) ${\mathcal T}$ is $\omega $-categorical iff all the theories ${\mathcal T}_{i,j}$ are $\omega $-categorical;
(b) ${\mathcal T}$ satisfies VC$^\sharp$ (that is, $I({\mathcal T})\in \{1,{\mathfrak{c}}\}$), if VC$^\sharp$ holds for all theories ${\mathcal T}_{i,j}$.
For the partial orders ${\mathbb X}$ from the closure $\langle \langle  {\mathcal C}^{\rm rt}_{\rm fa}\rangle _\Pi\rangle _\Sigma$
of the class ${\mathcal C}^{\rm rt}_{\rm fa}$ of finitely axiomatizable rooted trees
we have more: ${\mathcal T}$ is finitely axiomatizable and
${\mathbb Y}\in \mathop{\rm Mod}\nolimits ({\mathcal T})$
iff ${\mathbb Y} \cong \sum _{{\mathbb I}}\prod _{j<m_i}{\mathbb Y}_{i,j}$, where ${\mathbb Y}_{i,j}\in \mathop{\rm Mod}\nolimits ({\mathcal T}_{i,j})$, for all indices.
If, in particular, the factors ${\mathbb X}_{i,j}$ are sums of finitely axiomatizable linear orders $\sum _{{\mathbb K} _{i,j}}{\mathbb L} _{i,j,k}$,
then all these results are expressible in terms of the theories ${\mathcal T} _{i,j,k}:=\mathop{\rm Th}\nolimits ({\mathbb L} _{i,j,k})$ of linear order.

As a by-product we prove that for each $n$ the class ${\mathcal C} _n$ of partial orders isomorphic to a direct product of $n$ rooted trees of size $>1$
is first-order definable.
Moreover, the same holds for each subclass ${\mathcal C}$ of ${\mathcal C} _n$ isolated by a set of first-order properties of the factors ${\mathbb X}_i$ of a product $\prod _{i<n}{\mathbb X} _i$
(e.g.\ we can request that the factors are linear orders with some properties, trees of width $\leq n$, etc.).
Another by-product is that each $\omega $-categorical partial order from the class $\langle  \langle {\mathcal C}^{\rm rt}_{\rm fb}\rangle _\Pi\rangle _\Sigma$,
where ${\mathcal C}^{\rm rt}_{\rm fb}$ is the class of finite-branching rooted trees, is finitely axiomatizable.
This result is related to the results of Rosenstein (for the class of linear orders) and Schmerl (for the class of partial orders of finite width).

{\sl 2020 Mathematics Subject Classification}:
03C15, 
03C35, 
03C45, 
03C65, 
06A06, 
06A05. 

{\sl Key words}:
Vaught's conjecture,
Direct product,
Lexicographic sum,
Tree,
First-order definable class,
Finitely axiomatizable theory.
\end{abstract}
\section{Introduction}\label{S1}
Vaught's conjecture (VC), stated by Robert Vaught \cite{Vau},
is the statement that the number $I(\CT ,\o)$ of non-isomorphic countable models of a complete countable first-order theory $\CT$
is either at most countable or continuum.
The full conjecture follows from its restriction to the class of partial orders (see \cite{Hodg}, p.\ 231)
and it was confirmed for the theories of
linear orders (Matatyahu Rubin \cite{Rub}), model-theoretic trees (John Steel \cite{Stee}), reticles (James Schmerl \cite{Sch3}),
Boolean algebras (Paul Iverson \cite{Ive}).

Using the concept exploited in \cite{KMon}--\cite{Kprod1} we deal with the following question:
If VC is confirmed for the structures from a class $\CC$,
is it true for the structures belonging to its closure $\la\CC \ra$,
obtained by some model-theoretic constructions?
Clearly, the same question can be asked for the ``sharp" version of the conjecture, VC$^\sharp$, saying that $I(\CT,\o)\in \{1,\c\}$.
For example,  using Rubin's results from  \cite{Rub},
VC$^\sharp$ was confirmed  in \cite{KFMD}
for the relational structures definable by quantifier free formulas in labeled linear orders,
and in \cite{Ksharp} for the partial orders from the closure $\la\CC ^{\rm lo}\ra_{\du\Pi}$ of the class of linear orders under finite direct products and disjoint unions.
On the basis of Steel's results from  \cite{Stee} VC was confirmed in \cite{Kprod1}
for the partial orders from the closure $\la\CC ^{\rm rt}\ra_{\du\Pi}$, where $\CC ^{\rm rt}$ is the class of rooted trees.

Generally speaking, if $\CC$ is a class of partial orders,
$\la \CC\ra_\Pi$ its closure under finite direct products and isomorphism
and $\la\la \CC\ra_\Pi\ra_\Sigma$ the closure of $\la \CC\ra_\Pi$ under finite lexicographic sums,
then each partial order from $\la\la \CC\ra_\Pi\ra_\Sigma$ is isomorphic to one of the form $\X=\sum _{\BI}\prod _{j<m_i}\X_{i,j}$,
where $\BI$ is a finite partial order and $\X_{i,j}\in \CC$, for all $j<m_i$ and $i\in I$ (see Fact \ref{T944}).
Defining $\CT :=\Th (\X)$ and $\CT_{i,j} :=\Th (\X_{i,j})$, for $j<m_i$ and $i\in I$,
we regard the interplay between these theories and
consider the following properties:

(P1) $(\forall i\in I \;\forall j<m_i \;\;\CT _{i,j} \mbox{ satisfies VC})\Rightarrow \CT  \mbox{ satisfies VC}$,

(P2) $(\forall i\in I \;\forall j<m_i \;\;\CT _{i,j} \mbox{ satisfies VC$^\sharp$})\Rightarrow \CT  \mbox{ satisfies VC$^\sharp$}$,

(P3) $(\forall i\in I \;\forall j<m_i \;\;\CT _{i,j} \mbox{ is $\o$-categorical})\Leftrightarrow \CT  \mbox{ is $\o$-categorical}$,

(P4) $(\forall i\in I \;\forall j<m_i \;\;\CT _{i,j} \mbox{ is finitely axiomatizable})\Rightarrow \CT  \mbox{ is finitely axiomatizable}$,

(P5) $\Mod (\CT)=\{\Y : \Y \cong \sum _{\BI}\prod _{j<m_i}\Y_{i,j}, \mbox{ where } \Y_{i,j}\in \Mod (\CT_{i,j}),\mbox{ for all } j<m_i \mbox{ and }i\in I \}$.

\noindent
In this paper we consider the closure $\la\la \CC^{\rm rt}\ra_\Pi\ra_\Sigma$ of the class $\CC^{\rm rt}$ of rooted trees.

Our first result is that Vaught's conjecture is true
for each partial order $\X=\sum _{\BI}\prod _{j<m_i}\X_{i,j}$ belonging to the closure $\la\la \CC^{\rm rt}\ra_\Pi\ra_\Sigma$
(so we have (P1)) and, in addition, that (P2) and (P3) are true as well.
The proof is based on the following two known results:
1. VC holds for finite products of rooted trees \cite{Kprod1};
thus, we have VC for the summands $\X _i:=\prod _{j<m_i}\X_{i,j}$, $i\in I$,
and, since $\min \X _i$ exists for each $i\in I$, the sum $\X=\sum _{\BI}\X_i$ is an FLD$_0$-poset (see Section \ref{S2} for definitions);
2. VC holds for almost Vaught's FLD$_0$-theories \cite{KFLD}.
So, it remains to be proved that the FLD$_0$-theory $\Th (\X)$ is almost Vaught's,
and for this aim in Section \ref{S3} we prove that for each $n\in \N$ the class
$$
\CC _n :=\{ \X \in \Mod _{L_b}:\X \mbox{ is isomorphic to a direct product of $n$ rooted trees of size $>1$}\}
$$
is first-order definable.
The theorem confirming (P1)--(P3) for each partial order from the closure $\la\la \CC^{\rm rt}\ra_\Pi\ra_\Sigma$ is proved in Section \ref{S4}.

The following two sections are devoted to our next goal: to confirm (P1)--(P5) for the partial orders from the closure $\la\la \CC^{\rm rt}_{\rm fa}\ra_\Pi\ra_\Sigma$
of the class $\CC^{\rm rt}_{\rm fa}$ of finitely axiomatizable rooted trees.

So, in Section \ref{S5} we extend the main result of Section \ref{S3}
showing that, roughly, each subclass $\CC$ of $\CC _n$
isolated by a set of first-order properties of the factors $\X_i$  of a product $\prod _{i<n}\X _i$ is first-order definable.
For example, the class of distributive lattices isomorphic to a direct product of $n$ linear orders with a smallest element is first-order definable;
the same holds if we take linear orders with additional first-order properties (e.g. discrete, dense, bounded, etc.).

Section \ref{S6} deals with FLD theories having a model $\sum _{\BI}\X_i$ with finitely axiomatizable summands $\X _i$.
The properties analogous to (P1)--(P5) are confirmed.

The theorem confirming (P1)--(P5) for partial orders from $\la\la \CC^{\rm rt}_{\rm fa}\ra_\Pi\ra_\Sigma$ is proved in Section \ref{S7}
and as a by-product we obtain a result related to a general question: Which $\o$-categorical theories are finitely axiomatizable?
In our context, a more specific task is to detect classes $\CC$ of partial orders such that
\begin{equation}\label{EQ985}
\forall \X \in \CC \;(\X \mbox{ is $\o$-categorical} \Rightarrow \X  \mbox{ is finitely axiomatizable}).
\end{equation}
We recall that (\ref{EQ985}) was confirmed for the class of linear orders by Rosenstein \cite{Rosen1}
and, more generally, for the class of partial orders of finite width by Schmerl \cite{Sch1}.
In addition, Schmerl  \cite{Sch0} proved that for an $\o$-categorical tree $\X$ we have: $\X$ is finitely axiomatizable iff $\X$ is finite-branching.
So, denoting by $\CC^{\rm rt}_{\rm fb}$ the class of finite-branching rooted trees,
we confirm (\ref{EQ985}) for each partial order $\X=\sum _{\BI}\prod _{j<m_i}\X_{i,j}$ belonging to the closure $\la \la\CC^{\rm rt}_{\rm fb}\ra_\Pi\ra_\Sigma$
(here $\X_{i,j}\in \CC^{\rm rt}_{\rm fb}$, for all indices).

Section \ref{S8} provides a class of examples.
Namely, while we do not have a clear picture of the class $\CC ^{\rm rt}_{\rm fa}$  of finitely axiomatizable rooted trees,
the class of finitely axiomatizable linear orders is much better understood
(e.g.\ it contains the ordinals $<\o ^\o$, their inverses,
and, by the work of  Rubin \cite{Rub}, the smallest class of linear orders which contains 1 and which is closed under addition, $\Z$-sums and finite shuffles).
So if  $\CC$ is the class of lexicographic sums $\sum _\BK \BL _k$, where $\BK$ is a finite rooted tree
and $\BL _k$, $k\in K$, are finitely axiomatizable linear orders with a smallest element,
then  by the results of Section \ref{S6} we have $\CC \subset \CC ^{\rm rt}_{\rm fa}$
and for $\X = \sum _{\BI }\prod _{j<m_i}\sum _{\BK_{i,j}}\BL _{i,j,\,k} \in \la\la\CC\ra_\Pi\ra_\Sigma$
we have (P1)--(P5) expressed in terms of the theories $\CT_{i,j,\,k}:=\Th (\BL _{i,j,\,k}) $.
For example, $\Y$ is a model of $\Th (\X)$ iff
$\Y \cong \sum _{\BI }\prod _{j<m_i}\sum _{\BK_{i,j}}\BL _{i,j,\,k}'$, where $\BL _{i,j,\,k}'\in \Mod (\CT _{i,j,\,k})$, for all indices $i,j,k$.
\section{Preliminaries}\label{S2}
For a language $L$ by $\Mod _L$, $\Form _L$ and $\Sent _L$ we denote the class of $L$-structures, the set of first-order $L$-formulas and the set of $L$-sentences.
In particular, $L_b$ denotes the language with one binary relational symbol; working with partial orders we will take $L_b=\la \leq\ra$.
Isomorphism and elementary equivalence are denoted by $\cong$ and $\equiv$.
$I(\CT, \o):= |\Mod (\CT, \o)/\!\cong |$ is the number of non-isomorphic countable models of a complete theory $\CT$;
if $\CT$ has a finite model, $\X$, then, for convenience, we define $\Mod (\CT, \o)/\!\cong \;:= \{ [\X]\}$ and $I(\CT, \o):=1$.
For simplicity, instead of $I(\CT, \o)$ and $I(\Th (\X), \o)$ we will write only $I(\CT)$ and $I(\X )$.

Recall that a class $\CC\subset \Mod _L$  is called {\it first-order definable} (or a {\it basic elementary class}, \cite{CK})
iff there is a sentence $\s \in \Sent _L$ such that $\X\in \CC$ iff $\X \models \s$, for each $\X\in \Mod _L$.
A theory $\CT \subset \Sent _L$ is called {\it finitely axiomatizable} iff there is a sentence $\s \in \Sent _L$ such that $\X\models \CT$ iff $\X \models \s$, for each $\X\in \Mod _L$.
A structure $\X\in \Mod _L$ will be called finitely axiomatizable iff $\Th (\X)$ is finitely axiomatizable.

A partial order $\X$ is a (model-theoretic) {\it tree} iff $(\cdot ,x]:=\{ y\in X: y\leq x\}$ is a linear order, for each $x\in X$.
If, in addition, $\X$ has a smallest element, $\min \X$, then $\X$ ia a {\it rooted tree}.
\paragraph{Substructures with parametrically definable domains}
If $\f (w_0,\dots,w_{m-1},v)=\f (\bar w,v)$ is an $L_b$-formula,
then the corresponding {\it relativization} of an $L_b$-formula $\p (v_0\dots v_{n-1})=\p (\tilde v)$, where $n\in\o$,
is the $L_b$-formula $\p^{\f }(\bar w,\tilde v)$ defined by recursion in the following way:
$\p^{\f }(\bar w,\tilde v)                                        :=  \p (\tilde v)$, if $\p (\tilde v)$ is atomic,
$(\neg \p (\tilde v))^{\f }(\bar w,\tilde v)                      :=  \neg \p^{\f } (\bar w,\tilde v)$,
$(\p_0 (\tilde v)\land \p_1 (\tilde v))^{\f }(\bar w,\tilde v)    :=  \p_0^{\f } (\bar w,\tilde v)\land \p_1^{\f } (\bar w,\tilde v)$,
and similarly for disjunction,
$(\forall u \;\p (\tilde v,u))^{\f }(\bar w,\tilde v)             :=  \forall u\; (\f (\bar w,u)\Rightarrow \p ^{\f }(\bar w,\tilde v,u))$,
and $(\exists u \;\p (\tilde v,u))^{\f }(\bar w,\tilde v)             :=  \exists u\; (\f (\bar w,u)\land \p ^{\f }(\bar w,\tilde v,u))$.
If $\X \in \Mod _{L_b}$ and $\bar a \in X^m$, then $D_{\f (\bar a,v),\X}:=\{ x\in X: \X\models \f [\bar a,x]\}$ is the corresponding {\it parametrically definable substructure of $\X$}.
The following claim is folklore (for a proof see e.g.\ Fact 2.3 of \cite{KFLD}).
\begin{fac}\label{T704}
If $\f (\bar w,v)\in \Form _{L_b}$, $\X \in \Mod _{L_b}$, $\bar a \in X^m$ and $D_{\f (\bar a,v),\X}\neq \emptyset$,
then for each $L_b$-sentence $\p$ we have $D_{\f (\bar a,v),\X}\models \p  $ iff $\;\X \models \p^{\f }[\bar a]$.
\end{fac}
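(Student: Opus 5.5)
We prove the slightly stronger statement: for every formula $\p(\tilde v)\in \Form _{L_b}$ with $\tilde v=\la v_0,\dots ,v_{n-1}\ra$, and every tuple $\tilde b\in (D_{\f (\bar a,v),\X})^n$,
$$
D_{\f (\bar a,v),\X}\models \p [\tilde b] \;\;\Leftrightarrow\;\; \X\models \p^{\f }[\bar a,\tilde b].
$$
The fact is the case $n=0$. Write $D:=D_{\f (\bar a,v),\X}$, viewed as a substructure of $\X$. The hypothesis $D\neq\emptyset$ is needed so that $D$ is a legitimate $L_b$-structure. The proof is by induction on the construction of $\p$, following the recursive definition of the relativization.

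\emph{Atomic case.} Here $\p(\tilde v)$ is $v_k\leq v_l$ or $v_k=v_l$, and $\p^{\f }=\p$. Since $D$ carries the restriction of the relation of $\X$, we get $D\models \p[\tilde b]$ iff $\X\models \p[\tilde b]$, which is exactly $\X\models\p^{\f}[\bar a,\tilde b]$.

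\emph{Connectives.} The cases of $\neg$, $\land$ and $\lor$ follow at once from the induction hypothesis, because relativization commutes with these connectives.

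\emph{Quantifiers.} This is the only step with real content. Take $\exists u\,\p(\tilde v,u)$. We have $D\models \exists u\,\p[\tilde b]$ iff there is $c\in D$ with $D\models\p[\tilde b,c]$. By the induction hypothesis, applied to the tuple $\tilde b c$ from $D$, this holds iff there is $c\in X$ with $\X\models\f[\bar a,c]$ and $\X\models \p^{\f}[\bar a,\tilde b,c]$, using that $c\in D$ means precisely $\X\models\f[\bar a,c]$. That last condition is $\X\models(\exists u\,\p)^{\f}[\bar a,\tilde b]$.

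The universal case is the same argument with $\Rightarrow$ in place of $\land$. Alternatively, one can note that $\forall$ reduces to $\neg\exists\neg$, since the two relativizations are logically equivalent.

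\emph{Variable bookkeeping.} The one point needing care is variables. We must ensure the parameter variables $\bar w$ are distinct from the bound variable $u$, renaming bound variables if necessary, so that no capture occurs. We also need the induction hypothesis for $\p(\tilde v,u)$ to be available; it is, because the hypothesis is stated for arbitrary free-variable tuples.

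Beyond this, the argument is routine, and it completes the proof. \kdok
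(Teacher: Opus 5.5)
Your proof is correct. The paper does not prove this fact itself; it calls it folklore and refers to Fact 2.3 of its reference on FLD theories, and the standard argument is exactly your induction on the construction of $\p$, strengthened to formulas $\p(\tilde v)$ evaluated at tuples from $D_{\f(\bar a,v),\X}$, with the quantifier step resting on $c\in D_{\f(\bar a,v),\X}$ iff $\X\models\f[\bar a,c]$ (your renaming remark also covers the need for $u$ to be substitutable for $v$ in $\f$).
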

\begin{fac}\label{T922}
If $\f (v)\in \Form _{L_b}$, $\X,\Y \in \Mod _{L_b}$, $D_{\f (v),\X}\neq \emptyset$ and $\X \equiv \Y$, then $D_{\f (v),\X}\equiv D_{\f (v),\Y}$.
\end{fac}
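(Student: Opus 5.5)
The plan is to reduce the claim to Fact \ref{T704}, using the formula $\f (v)$ with no parameters, i.e.\ taking $m=0$. Since $D_{\f (v),\X}\neq\emptyset$, Fact \ref{T704} applies to $\X$ directly. For every $L_b$-sentence $\p$ we get
$$D_{\f (v),\X}\models \p \iff \X\models \p^{\f }.$$
The relativization $\p^{\f }$ is again an $L_b$-sentence, because $\f$ has only the free variable $v$ and the recursion introduces no new free variables.

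The first step is to check that $D_{\f (v),\Y}\neq\emptyset$, so that Fact \ref{T704} can also be applied to $\Y$. The sentence $\exists v\, \f (v)$ holds in $\X$. Since $\X\equiv\Y$, it holds in $\Y$ too, so $D_{\f (v),\Y}$ is nonempty and is a genuine $L_b$-structure.

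The second step is the chain of equivalences
$$D_{\f (v),\X}\models \p \iff \X\models \p^{\f } \iff \Y\models \p^{\f } \iff D_{\f (v),\Y}\models \p .$$
The first and last equivalences are Fact \ref{T704}, applied in $\X$ and in $\Y$. The middle one holds because $\X\equiv\Y$ and $\p^{\f }$ is a sentence. Since $\p$ was an arbitrary sentence, this gives $\Th (D_{\f (v),\X})=\Th (D_{\f (v),\Y})$, that is, $D_{\f (v),\X}\equiv D_{\f (v),\Y}$.

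No step here is a real obstacle. The only point needing care is nonemptiness on the $\Y$ side, which is handled by transferring $\exists v\,\f(v)$ through elementary equivalence.
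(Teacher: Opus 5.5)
Your proof is correct and follows the paper's argument exactly: it uses the same chain of equivalences through Fact~\ref{T704} and the relativized sentence $\psi^{\varphi}$. You also make explicit that $D_{\varphi(v),\mathbb{Y}}\neq\emptyset$, by transferring $\exists v\,\varphi(v)$ via $\mathbb{X}\equiv\mathbb{Y}$; the paper uses this tacitly when it applies Fact~\ref{T704} to $\mathbb{Y}$.
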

\dok
By Fact \ref{T704} for $\p\in \Sent_{L_b}$ we have:
$D_{\f (v),\X}\models \p$
iff $\X\models \p ^\f$
iff $\Y\models \p ^\f$
iff $D_{\f (v),\Y}\models \p$.
\hfill $\Box$
\paragraph{Lexicographic sums}
Let $\BI=\la  I, \r _I\ra$ and $\X _i =\la X_i , \r _i \ra$, $i\in I$, be $L_b$-structures with pairwise disjoint domains.
The {\it lexicographic sum of the structures $\X _i$, $i\in I$, over the structure $\BI$}, in notation $\sum _{\BI}\X _i$, is the  $L_b$-structure
$\X :=\la X, \r \ra$, where $X := \bigcup _{i\in I}X_i$ and
for $x,x'\in X$ we have: $x\,\r \,x'$ iff
\begin{equation}\label{EQ200}
\exists i\in I \;\; (x,x'\in X_i \land x \,\r _i\, x') \;\lor\;
\exists \la i,j \ra \in \r _I \setminus \Delta _I \;\; (x\in X_i \land x'\in X_j ),
\end{equation}
where $\Delta _I:=\{ \la i,i\ra:i\in I\}$. For a proof of the following statement see e.g.\ Fact 2.7 of \cite{KFLD}.
\begin{fac}\label{T200}
If $\sum _{\BI}\X _i$ and $\sum _{\BI}\Y _i$ are lexicographic sums of $L_b$-structures,
then

(a) If $\X _i \cong \Y _i$, for $i\in I$, then $\sum _{\BI}\X _i \cong \sum _{\BI}\Y _i$;

(b) If $\X _i \equiv  \Y _i$, for all $i\in I$, then $\sum _{\BI}\X _i \equiv  \sum _{\BI}\Y _i $.
\end{fac}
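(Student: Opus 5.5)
For (a) I will build the isomorphism directly. Fix isomorphisms $f_i:\X_i\to \Y_i$ for $i\in I$; the domains are pairwise disjoint on each side, so $f:=\bigcup_{i\in I}f_i$ is a well-defined bijection from $X=\bigcup_i X_i$ onto $Y=\bigcup_i Y_i$, and it maps each $X_i$ onto $Y_i$. By (\ref{EQ200}), $x\,\r\, x'$ holds in one of two ways. If $x,x'\in X_i$ and $x\,\r_i\, x'$, then $f_i(x)\,\r_i'\, f_i(x')$. If $x\in X_i$, $x'\in X_j$ with $\la i,j\ra\in \r_I\setminus\Delta_I$, then $f(x)\in Y_i$ and $f(x')\in Y_j$. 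Since every element lies in a unique summand, the reverse implication follows by the same reasoning applied to $f^{-1}$. So $f$ is an isomorphism.

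For (b) the natural tool is the Ehrenfeucht--Fra\"iss\'e characterization: $\X\equiv\Y$ iff for every $n$ Duplicator wins the $n$-round game $G_n(\X,\Y)$. Assume $\X_i\equiv\Y_i$ for all $i$, fix $n$, and fix winning strategies $S_i$ for Duplicator in $G_n(\X_i,\Y_i)$. Duplicator plays on the sum as follows. When Spoiler picks an element of $X_i$ (or of $Y_i$), Duplicator answers inside $Y_i$ (respectively $X_i$), using $S_i$ applied to the subgame consisting only of the moves made so far inside the $i$-th summands. Each subgame has at most $n$ rounds, so $S_i$ remains applicable throughout.

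After $n$ rounds, the pairs $(a_k,b_k)$ satisfy $a_k\in X_i\Leftrightarrow b_k\in Y_i$. For $a_k,a_l$ in the same summand $i$, the relation between them agrees with that between $b_k,b_l$ because $S_i$ is winning; equality is also preserved. For $a_k,a_l$ in different summands $i\neq j$, (\ref{EQ200}) says the relation holds iff $\la i,j\ra\in\r_I$, and the same holds for $b_k,b_l$. Hence the resulting map is a partial isomorphism, Duplicator wins $G_n(\X,\Y)$ for every $n$, and $\X\equiv\Y$.

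The only delicate point is the bookkeeping of the interleaved subgames. The strategies must be required to work even when rounds of different subgames alternate and some subgames receive fewer than $n$ moves. This is harmless, since a winning strategy for $n$ rounds is also a winning strategy for fewer rounds. The argument needs no finiteness of $I$. If finite $L_b$-structures are allowed, nothing changes. Alternatively, one could appeal to the Feferman--Vaught theorem for generalized products, but the game argument is self-contained.
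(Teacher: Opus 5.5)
Your proof is correct. For (a), the union of the $f_i$ is an isomorphism because, by (\ref{EQ200}), whether two points are related in the sum depends only on which summands they lie in and, when they lie in the same summand $i$, only on $\r_i$. For (b), the composed Ehrenfeucht--Fra\"{\i}ss\'e strategy works for the same reason. Your handling of interleaved subgames and of subgames that receive fewer than $n$ moves is exactly right: the restriction of a partial isomorphism is again a partial isomorphism, so any prefix of a play following a winning strategy is harmless.

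The paper does not prove this fact itself; it only refers to Fact 2.7 of \cite{KFLD}. So there is no in-paper argument to compare yours with, and yours serves as a self-contained substitute. Your game argument actually proves slightly more than is stated. It shows that $\X_i\equiv_n\Y_i$ for all $i$ already gives $\sum_{\BI}\X_i\equiv_n\sum_{\BI}\Y_i$. It does so for an arbitrary, possibly infinite, index structure $\BI$ carrying an arbitrary binary relation $\r_I$, not only finite partial orders. The Feferman--Vaught route you mention would give still more: an explicit reduction of every sentence about the sum to sentences about $\BI$ and the summands. That is heavier than anything needed here. The paper only uses the fact in its preservation form, for example in Lemma \ref{T938}, Proposition \ref{T920} and Theorem \ref{T932}, where a rooted tree is viewed as the sum of its root and $\X^+$, or where an FLD$_0$-poset is rebuilt from elementarily equivalent summands.
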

$\CC^{\rm fin}$ will denote the class of finite partial orders and
if $\CC$ is an $\cong$-closed class of partial orders,
by $\la \CC \ra _{\Sigma}$ we denote the minimal closure of $\CC$ under isomorphism and finite lexicographic sums.\footnote{If $\BI \in \CC ^{\rm fin}$
and $\la \Y _i :i\in I\ra \in \CC ^I$,
then in $\CC$ there are structures $\X_i$, $i\in I$, with pairwise disjoint domains such that $\X_i \cong\Y _i $, for $i\in I$,
and the sum $\sum _\BI \X _i\in \la \CC \ra _{\Sigma}$ is well defined.
Taking another representatives $\X_i' \cong\Y _i $
by Fact \ref{T200}(a) we have $\sum _\BI \X _i'\cong \sum _\BI \X _i$;
so $\la \CC \ra _{\Sigma}$ can be regarded as a closure under finite lexicographic sums of order types from $\CC$.}
\begin{fac}\label{T945}
If $\CC$ is an $\cong$-closed class of partial orders, then
\begin{equation}\label{EQ981}\textstyle
\la \CC \ra _{\Sigma}  = \textstyle \{\sum _\BI \X _i:\BI \in \CC ^{\rm fin}\land  \la \X _i :i\in I\ra \in \CC ^I
                           \land \forall i,j\in I \; (i\neq j\Rightarrow X_i \cap X_j=\emptyset)\}.
\end{equation}
\end{fac}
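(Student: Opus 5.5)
We prove the two inclusions in (\ref{EQ981}) separately. Write $\CD$ for the class on the right-hand side of (\ref{EQ981}). The key point is that $\CD$ is itself closed under isomorphism and under finite lexicographic sums, and that lexicographic sums are associative in a suitable sense.

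\emph{The inclusion $\CD \subseteq \la \CC \ra _{\Sigma}$.} Let $\sum _\BI \X _i\in\CD$. Each $\X_i$ is in $\CC\subseteq\la\CC\ra_\Sigma$, and $\la\CC\ra_\Sigma$ is closed under finite lexicographic sums. Hence $\sum_\BI\X_i\in\la\CC\ra_\Sigma$. Since $\BI\in\CC^{\rm fin}$ is a finite poset, this is exactly a sum allowed in the closure.

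\emph{The inclusion $\la \CC \ra _{\Sigma}\subseteq\CD$.} By minimality of $\la\CC\ra_\Sigma$ it suffices to check three things: $\CC\subseteq\CD$, $\CD$ is $\cong$-closed, and $\CD$ is closed under finite lexicographic sums.

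\begin{itemize}
\item For $\CC\subseteq\CD$: given $\X\in\CC$, take $\BI$ to be the one-element poset. Then $\X\cong\sum_\BI\X$, which lies in $\CD$ once we have $\cong$-closedness.
\item For $\cong$-closedness: suppose $\Y\cong\sum_\BI\X_i$ via an isomorphism $f$. Put $\Y_i:=f[X_i]$ with the induced order. Then $\Y_i\cong\X_i$, so $\Y_i\in\CC$ because $\CC$ is $\cong$-closed. The domains $Y_i$ are pairwise disjoint, and $\Y=\sum_\BI\Y_i$ literally, since (\ref{EQ200}) is transported by $f$.
\end{itemize}

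The main step, and the only real work, is closure under sums. Let $\BJ\in\CC^{\rm fin}$ and let $\Y_j=\sum_{\BI_j}\X_{j,i}\in\CD$ for $j\in J$, with pairwise disjoint domains. Define the finite poset $\BK$ on $K:=\{\la j,i\ra : j\in J,\ i\in I_j\}$ by $\la j,i\ra\le\la j',i'\ra$ iff either $j=j'$ and $i\le_{\BI_j} i'$, or $j<_\BJ j'$. This is exactly $\sum_\BJ\BI_j$, made disjoint by tagging with $j$. So $\BK$ is a finite partial order. To get $\BK\in\CC^{\rm fin}$ we must interpret $\CC^{\rm fin}$ as the class of finite partial orders, as the paper defines it, not as the finite members of $\CC$; with this reading the claim holds.

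The summands $\X_{j,i}$, $\la j,i\ra\in K$, are pairwise disjoint. Within a fixed $j$ this holds by assumption, and across different $j$ it holds because the $Y_j$ are disjoint. It remains to verify $\sum_\BJ\Y_j=\sum_\BK\X_{j,i}$ by unfolding (\ref{EQ200}) twice. Take $x\in X_{j,i}$ and $x'\in X_{j',i'}$; there are three cases.

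\begin{itemize}
\item If $j=j'$ and $i=i'$, both sides reduce to $x\,\r_{j,i}\,x'$.
\item If $j=j'$ and $i\neq i'$, both sides reduce to $i<_{\BI_j}i'$.
\item If $j\ne j'$, both sides reduce to $j<_\BJ j'$.
\end{itemize}

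This bookkeeping is routine. I expect the only delicate point to be making sure that $\BK$ is a genuine partial order, and that the isomorphism-closure footnote allows re-choosing disjoint representatives. The latter is handled by Fact \ref{T200}(a).
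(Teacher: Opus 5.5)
Your proof is correct and follows the paper's route. You show that the right-hand side contains $\CC$ (via a one-element index poset), is $\cong$-closed (by transporting the partition $\{X_i\}$ along $f$), and is closed under finite lexicographic sums, then use minimality of $\la\CC\ra_\Sigma$ for one inclusion and the closure properties of $\la\CC\ra_\Sigma$ for the other. The only difference is that you verify the ``sum of sums is a sum'' step directly, by building the tagged poset $\BK=\sum_\BJ\BI_j$ and checking the three cases of (\ref{EQ200}), whereas the paper simply cites this step as Fact 5.1 of its reference on FLD theories.
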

\dok
Let $\CC _0$ denote the r.h.s.\ of (\ref{EQ981}). Taking sums over $\BI\cong 1$ we see that $\CC\subset \CC _0$.
The class $\CC _0$ is closed under finite lexicographic sums,
because by Fact 5.1 of \cite{KFLD}, roughly,
a sum of sums is a sum (i.e.\ $\sum _{\BI}\sum _{\BJ _i}\X _j=\sum _{\sum _{\BI}\BJ _i }\X _j$).
In order to show that $\CC _0$ is $\cong$-closed
we suppose that $\sum _\BI \X _i \in \CC _0$ and that $f:\sum _\BI \X _i\rightarrow \Y$ is an isomorphism.
Then defining $Y_i:=f[X_i]$, for $i\in I$, we obtain a partition $\{ Y_i :i\in I\}$ of $Y$;
also $\Y_i\cong \X _i$, for all $i\in I$, and, since $\CC$ is $\cong$-closed, $\Y_i\in \CC$, for $i\in I$.
By (\ref{EQ200}) we have $\Y =\sum _{\BI}\Y_i$ and, hence, $\Y\in \CC _0$.
If $\CC \subset \CD$ and the class $\CD $ is closed under isomorphism and finite lexicographic sums,
then, clearly,  $\CC _0 \subset \CD$. Thus, $\la \CC \ra _{\Sigma}=\CC _0$ indeed.
\hfill $\Box$
\paragraph{FLD theories}
By \cite{KFLD} a complete theory $\CT$ of partial order is an FLD$_0$-{\it theory}
iff some (equivalently, any) of its models $\X$
admits a finite lexicographic decomposition $\X =\sum _{\BI}\X _i$,
where $\BI$ is a finite partial order and $\X _i$-s are pairwise disjoint partial orders with a smallest element.
Then we write $\sum _{\BI}\X_i\in \CD (\CT)$;
namely, $\CD (\CT)$ denotes the class of all such decompositions of models of $\CT$.
Dually we define FLD$_1$-theories (here $\X _i$-s have a largest element).
By the dual of Theorem 3.1 from \cite{KFLD} we have
\begin{fac}\label{T700}
If $\CT$ is an FLD$_0$-theory and $\X =\sum _{\BI}\X_i\in \CD (\CT)$,
where $\BI=\la n,\leq _{\BI}\ra$, $r_i=\min \X _i$, for $i<n$, and $\bar r:=\la r_0,\dots ,r_{n-1}\ra$, then
there are $L_b$-formulas $\f_i (w_0,\dots ,w_{n-1},v)=\f_i (\bar w,v)$, $i<n$, such that\\[-7mm]
\begin{itemize}\itemsep=-1mm
\item[(a)] $X_i=D_{\f _i (\bar r , v), \X}$, for $i<n$, the formula $\ve (\bar w, u,v)=\bigvee _{i<n} (\f _i(\bar w , u)\land \f _i(\bar w , v))$
               defines in $\X$ an equivalence relation on the set $X$ and $X/D_{\ve (\bar r, u,v),\X}=\{ X_i :i<n\}$;
\item[(b)] If  $\,\Y  \equiv \X$ and $\t _i \in \Th (\X _i)$, for $i<n$,
               then there is $\bar r ':=\la r_0',\dots ,r_{n-1}'\ra \in Y^n$ such that defining $Y_i:=D_{\f _i (\bar r ' , v), \Y}$, for $i<n$, we have\\[-8mm]
               \begin{itemize}\itemsep=-1mm
               \item[(i)] $\{ Y_i:i<n\}$ is a partition of the set $Y$  and $Y/D_{\ve (\bar r',u,v),\Y} =\{ Y_i:i<n\}$,
               \item[(ii)] $\Y =\sum _{\BI}\Y_i$ and $r_i'=\min \Y _i$, for $i<n$,
               \item[(iii)] $\Y _i \models \t _i$, for $i<n$.
               \end{itemize}
\end{itemize}
\end{fac}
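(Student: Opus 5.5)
The plan is to write down the formulas $\f_i$ explicitly, using only the parameters $\bar w$ and the order, and then to push everything needed for (b) into one first-order sentence that holds in $\X$ and therefore transfers to $\Y$.

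\emph{Choice of the formulas.} For $x\in X_i$ I claim that the set $S(x):=\{j<n: r_j\leq x\}$ equals $\{j: j\leq_\BI i\}$. Indeed, $r_i\leq x$ holds because $r_i=\min\X_i$. For $j\neq i$, (\ref{EQ200}) gives $r_j\leq x$ iff $j<_\BI i$. Since $\BI$ is a partial order, the down-set $\{j:j\leq_\BI i\}$ determines $i$, so distinct $i$ give distinct sets $S(x)$. So I define
$$\textstyle
\f_i(\bar w,v):=\bigwedge_{j\leq_\BI i} w_j\leq v\;\land\;\bigwedge_{j\not\leq_\BI i}\neg\, w_j\leq v .
$$
Then $D_{\f_i(\bar r,v),\X}=X_i$. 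Since the $X_i$ partition $X$, the formula $\ve(\bar r,u,v)$ says exactly that ``$u,v$ lie in the same $X_i$''. This gives (a).

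\emph{The transfer sentence for (b).} Given $\t_i\in\Th(\X_i)$, let $\s$ be the sentence $\exists \bar w\,\theta(\bar w)$, where $\theta(\bar w)$ is the conjunction of the following clauses.
\begin{itemize}
\item[(1)] Every $v$ satisfies exactly one $\f_i(\bar w,v)$.
\item[(2)] For each $i$, $\f_i(\bar w,w_i)$ holds and $\forall v\,(\f_i(\bar w,v)\Rightarrow w_i\leq v)$.
\item[(3)] For all $i\neq j$: $\forall u,v\,\bigl(\f_i(\bar w,u)\land\f_j(\bar w,v)\Rightarrow u\leq v\bigr)$ if $i<_\BI j$, and $\forall u,v\,\bigl(\f_i(\bar w,u)\land\f_j(\bar w,v)\Rightarrow \neg\, u\leq v\bigr)$ otherwise.
\item[(4)] For each $i$, $\t_i^{\f_i}(\bar w)$.
\end{itemize}
By (a), Fact~\ref{T704} and the fact that $\X_i\models\t_i$, we get $\X\models\theta[\bar r]$. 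Hence $\X\models\s$, and since $\Y\equiv\X$, also $\Y\models\s$. Pick a witness $\bar r'$ in $\Y$.

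\emph{Reading off (i)--(iii).} Clause (1) gives (i): the sets $Y_i$ partition $Y$ and are the classes of $\ve(\bar r',u,v)$. By clause (2), each $Y_i$ is nonempty and $r_i'=\min\Y_i$. Clause (3) states precisely the cross-block part of (\ref{EQ200}), while inside each block the order of $\Y$ is that of $\Y_i$; hence $\Y=\sum_\BI\Y_i$, which is (ii). Finally, clause (4), the nonemptiness of the $Y_i$, and Fact~\ref{T704} give $\Y_i\models\t_i$, which is (iii).

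\emph{Main obstacle.} The only real step is the choice of $\f_i$, which must be uniform so that it works in every elementarily equivalent $\Y$. This is handled by putting the defining property into $\s$ through clause (1), rather than re-deriving it in $\Y$. Everything else is bookkeeping with relativizations.
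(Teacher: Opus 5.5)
Your proof is correct. The paper gives no proof of this Fact; it imports it as the dual of Theorem 3.1 of \cite{KFLD}, which is stated there for FLD$_1$-theories, where the summands have largest elements. Your argument is therefore a self-contained replacement for that citation, not a reconstruction of an argument in the text.

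Both of your ingredients are sound:
\begin{itemize}
\item The quantifier-free $\f_i$ work. For $x\in X_i$, (\ref{EQ200}) and the disjointness of the $X_j$ make $\{j: r_j\leq x\}$ exactly the principal down-set of $i$ in $\BI$, and antisymmetry of $\BI$ lets this down-set determine $i$.
\item Your transfer sentence $\s$ is essentially the sentence $\p$ of (\ref{EQ972}), which the paper writes down in the proof of Theorem \ref{T932}(a). The only difference is that the paper uses the clause $\p_\BI(\bar w)$ together with the biconditional $u\leq v\Leftrightarrow w_i\leq w_j$, while your clause (3) encodes $<_\BI$ directly.
\end{itemize}

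Your route buys explicitness. The $\f_i$ are quantifier-free and depend only on $\BI$, not on $\CT$ or the $\t_i$, and all of (b) comes from one sentence true in $\X$ and transferred to $\Y$. The same construction, using the up-set $\{j: x\leq r_j\}$ instead, proves the FLD$_1$ version, so proving the FLD$_0$ form directly loses nothing. The citation route buys only brevity.
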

An FLD$_0$-theory $\CT$ is called {\it actually Vaught's}
iff for some decomposition $\sum _{\BI}\X_i\in \CD (\CT)$  there are sentences $\t _i\in \Th (\X _i)$, $i\in I$, providing VC; namely,
\begin{equation}\label{EQ741}\textstyle
\exists \sum _{\BI}\X_i\in \CD (\CT) \;\;\forall i\in I \;\;\exists \t _i \in \Th (\X _i) \;\;\forall \Z\models \t _i \;\;(I(\Z)\leq \o \lor I(\Z)=\c).
\end{equation}
By the dual Theorem 4.1 of \cite{KFLD} (proved for FLD$_1$-theories) we have
\begin{fac}\label{T712}
Vaught's conjecture is true for each actually Vaught's  FLD$_0$-theory $\CT$ and we have
\begin{equation}\label{EQ754}\textstyle
I(\CT)   = \left\{
           \begin{array}{cl}
           \c,         & \mbox{ if }\; \exists \sum _{\BI}\X_i\in \CD (\CT)\;\prod_{i\in I} I(\X _i)=\c, \;\mbox{ or $\CT$ is large}, \\[1mm]
           1,          & \mbox{ if }\; \exists \sum _{\BI}\X_i\in \CD (\CT)\;\prod_{i\in I} I(\X _i)=1,\\[1mm]
           \in [3,\o], & \mbox{ otherwise.}
           \end{array}
           \right.
\end{equation}
\end{fac}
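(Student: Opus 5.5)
The plan is to obtain the statement by a purely formal dualization of Theorem 4.1 of \cite{KFLD}, which is proved there for FLD$_1$-theories. For an $L_b$-structure $\X=\la X,\leq\ra$ let $\X^*=\la X,\geq\ra$ be its reverse. For a sentence $\p$ let $\p^*$ be obtained by replacing each atomic $u\leq v$ with $v\leq u$, and put $\CT^*:=\{\p^*:\p\in\CT\}$. A trivial induction on formulas gives $\X\models\p$ iff $\X^*\models\p^*$. Consequently:
\begin{itemize}
\item $\Th(\X^*)=\Th(\X)^*$ and $\Mod(\CT^*)=\{\Y^*:\Y\in\Mod(\CT)\}$;
\item since $\X\cong\Y$ iff $\X^*\cong\Y^*$, the map $[\Y]\mapsto[\Y^*]$ is a bijection between $\Mod(\CT,\o)/\!\cong$ and $\Mod(\CT^*,\o)/\!\cong$, so $I(\CT^*)=I(\CT)$;
\item with the convention for finite models, the finite-model case is also preserved.
\end{itemize}

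The next step is to check how lexicographic sums behave under reversal. From (\ref{EQ200}) one reads off directly that $(\sum_{\BI}\X_i)^*=\sum_{\BI^*}\X_i^*$. Also, $\X_i$ has a smallest element iff $\X_i^*$ has a largest one. Hence $\CT$ is an FLD$_0$-theory iff $\CT^*$ is an FLD$_1$-theory, and $\sum_\BI\X_i\mapsto\sum_{\BI^*}\X_i^*$ is a bijection from $\CD(\CT)$ onto the corresponding class of FLD$_1$-decompositions of models of $\CT^*$.

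Under this bijection the actually Vaught's condition (\ref{EQ741}) transfers: if $\t_i\in\Th(\X_i)$ witnesses VC for all $\Z\models\t_i$, then $\t_i^*\in\Th(\X_i^*)$ does the same job for $\X_i^*$. Indeed every $\Z'\models\t_i^*$ has the form $\Z^*$ with $\Z\models\t_i$, and $I(\Z^*)=I(\Z)$. So $\CT^*$ is an actually Vaught's FLD$_1$-theory, and Theorem 4.1 of \cite{KFLD} gives VC for $\CT^*$, hence for $\CT$. The same theorem also gives the trichotomy for $I(\CT^*)$ in terms of $\prod_{i}I(\X_i^*)$ over decompositions in $\CD(\CT^*)$ and of largeness of $\CT^*$. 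Rewriting this via $I(\X_i^*)=I(\X_i)$ and the bijection of decompositions yields (\ref{EQ754}).

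The main (though mild) obstacle is the notion "$\CT$ is large", which is defined in \cite{KFLD} for the FLD$_1$ case. I would check that its dual definition, the one intended here for FLD$_0$-theories, is exactly the condition that $\CT^*$ is large in the sense of \cite{KFLD}. Since that definition is stated in terms of decompositions and first-order properties of the summands, it should be preserved by the $^*$-translation above. Nothing else in the argument requires more than bookkeeping.
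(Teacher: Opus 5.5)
Your proposal is correct and takes the paper's route. The paper proves this only by invoking the dual of Theorem 4.1 of \cite{KFLD}; you spell out that dualization ($\X\mapsto\X^*$, $(\sum_{\BI}\X_i)^*=\sum_{\BI^*}\X_i^*$, $\t_i\mapsto\t_i^*$, $I(\Z^*)=I(\Z)$). One correction: ``large'' is not defined via decompositions but is the usual notion ``not small'' (some $S_n(\CT)$ is uncountable, cf.\ Remark~\ref{R901}), and it is self-dual because $\p(\bar v)\mapsto\p^*(\bar v)$ gives a bijection $S_n(\CT)\to S_n(\CT^*)$.
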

\paragraph{Direct products} We will use the following well-known facts (valid for any language $L$, see \cite{FV}).
\begin{fac}\label{T043}
If $\X _i$ and $\Y _i$, for $i\in I$, are $L$-structures, then we have

(a) $\prod _{i\in I}\X _i \cong \prod _{i\in I}\X _{\pi (i)}$, for any permutation $\pi \in \Sym (I)$;

(b) If $\X _i \cong \Y _i$, for all $i\in I$, then $\prod _{i\in I}\X _i \cong \prod _{i\in I}\Y _i$;

(c) If $\X _i \equiv \Y _i$, for all $i\in I$, then $\prod _{i\in I}\X _i \equiv \prod _{i\in I}\Y _i$.
\end{fac}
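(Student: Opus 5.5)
For (a) and (b) I would write down explicit isomorphisms. For (a), given $\pi\in\Sym (I)$, I would define $F:\prod _{i\in I}\X _i\rightarrow \prod _{i\in I}\X _{\pi (i)}$ by $F(x)(i):=x(\pi (i))$. This is a bijection whose inverse is induced by $\pi ^{-1}$. Since relations, functions and constants in a direct product are interpreted coordinatewise, a tuple satisfies an atomic relation in the product iff it does so in every coordinate. Reindexing the coordinates by $\pi$ does not change this condition, so $F$ preserves and reflects all symbols of $L$. For (b), given isomorphisms $f_i:\X _i\rightarrow \Y _i$, I would take $F(x):=\la f_i(x(i)):i\in I\ra$. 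This is bijective with inverse built from the $f_i^{-1}$, and the same coordinatewise description of the product structure shows that $F$ is an isomorphism.

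For (c) I would use Ehrenfeucht--Fra\"{\i}ss\'e games, at least when $L$ is relational and $I$ is finite, which is the only case used later in the paper. The plan is to prove, by induction on $k$, that if Duplicator wins the $k$-round game $EF_k(\X _i,\Y _i)$ for every $i\in I$, then she wins $EF_k(\prod _I\X _i,\prod _I\Y _i)$. Her strategy runs all the component games in parallel. When Spoiler plays $x$ in one product, Duplicator feeds each coordinate $x(i)$ to her strategy in the $i$-th game, obtains answers $y(i)$, and responds with $y:=\la y(i):i\in I\ra$. After $k$ rounds, each coordinate of the chosen tuples is a partial isomorphism $\X _i\rightarrow \Y _i$. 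Because atomic formulas hold in the product iff they hold in every coordinate, the map between the chosen tuples is then a partial isomorphism of the products. Since $\X _i\equiv \Y _i$ gives a winning strategy in $EF_k(\X _i,\Y _i)$ for every $k$, and $I$ is finite, we get $\prod _I\X _i\equiv _k\prod _I\Y _i$ for all $k$, hence elementary equivalence.

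The main obstacle is the general case: an infinite index set and languages with function symbols. For function symbols I would pass to the relational version of $L$ by replacing functions with their graphs. This is harmless here, since both $\equiv$ and products commute with that translation. The infinite-$I$ case cannot be handled by the finite game argument as stated. For it I would invoke the Feferman--Vaught theorem~\cite{FV}: every formula $\f$ about the product is equivalent to a Boolean combination, evaluated in the power set algebra $P(I)$, of the sets $\{ i\in I:\X _i\models \theta _l[\bar x(i)]\}$ for finitely many formulas $\theta _l$. For sentences, each such set is either $\emptyset$ or $I$ uniformly in $i$. So the truth value of a sentence in the product depends only on $\Th (\X _i)$, $i\in I$, and (c) follows. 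Since the paper only uses finite products of partial orders, the game argument already covers every application, and the general case can be cited from~\cite{FV}.
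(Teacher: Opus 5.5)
Your proposal is correct in substance, but it takes a different route from the paper. The paper gives no proof of this Fact: it treats (a)--(c) as well known and cites Feferman--Vaught \cite{FV}. That reduction theorem covers arbitrary index sets and languages, and it yields much more than preservation of $\equiv$. You instead give self-contained arguments: explicit coordinatewise isomorphisms for (a) and (b), and a parallel Ehrenfeucht--Fra\"{\i}ss\'e strategy for (c). This is more elementary, and it already covers every use in the paper, which only needs finite products in $L_b$.

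A few points should be corrected.

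\begin{itemize}
\item Your game argument never uses finiteness of $I$. Fix a winning strategy in each coordinate game (using choice). After $k$ rounds, the chosen tuples induce a partial isomorphism of the products for any $I$, because an atomic formula, including equality, holds in a product iff it holds in every coordinate. So (c) follows in full generality without \cite{FV}.
\item You should add one reduction, which is needed anyway. $\X _i\equiv \Y _i$ gives Duplicator winning strategies in $EF_k$ only for a finite relational signature. So first note that a sentence uses finitely many symbols, and pass to finite relational reducts; reducts commute with products.
\item Your paraphrase of Feferman--Vaught is inaccurate. For a sentence $\theta _l$, the set $\{ i\in I:\X _i\models \theta _l\}$ need not be $\emptyset$ or $I$, since different factors may disagree on $\theta _l$. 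The correct point is that this set equals $\{ i\in I:\Y _i\models \theta _l\}$ because $\X _i\equiv \Y _i$ for every $i$. Also, the reduction is a first-order formula of the Boolean algebra $P(I)$ evaluated at these sets, not merely a Boolean combination of them.
\end{itemize}

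With these fixes the proof is complete.
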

If $\CC$ is an $\cong$-closed class of partial orders
by $\la \CC \ra_{\Pi}$ we denote the minimal closure of the class $\CC $ under finite direct products and isomorphism.
\begin{fac}\label{T944}
If $\CC$ is an $\cong$-closed class of partial orders, then

(a) $\la \CC \ra_{\Pi}=\{ \Y \in \Mod _{L_b}:\exists n\in \N \;\;\exists \X_0,\dots,\X_{n-1}\in \CC \;\; \Y \cong \prod _{i<n}\X_i \}$;

(b) $\Y\in \la\la \CC \ra_{\Pi} \ra _{\Sigma}$ iff there are $\BI\in \CC ^{\rm fin}$,
$n_i \in \N$ and $\X_{i,j}\in \CC $, for $j<n_i$ and $i\in I$, such that the sets $X_i:=\prod _{j<n_i}X_{i,j}$, $i\in I$, are pairwise disjoint and
\begin{equation}\label{EQ982}\textstyle
\Y\cong \sum _{\BI}\prod _{j<n_i}\X_{i,j}.
\end{equation}
\end{fac}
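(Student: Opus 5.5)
We prove (a) and (b) in turn; both are routine closure arguments modelled on the proof of Fact \ref{T945}.

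For (a), let $\CC_1$ denote the right-hand side. We check three things.

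First, $\CC\subset \CC_1$: take $n=1$.

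Second, $\CC_1$ is closed under isomorphism. If $\Y\cong \prod_{i<n}\X_i$ and $\Z\cong\Y$, then $\Z\cong\prod_{i<n}\X_i$ by transitivity of $\cong$.

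Third, $\CC_1$ is closed under finite direct products. Suppose $\Y_k\cong\prod_{i<n_k}\X_{k,i}$ for $k<m$. By Fact \ref{T043}(b) we get $\prod_{k<m}\Y_k\cong\prod_{k<m}\prod_{i<n_k}\X_{k,i}$. By the standard associativity of direct products, the map $\la\la x_{k,i}:i<n_k\ra:k<m\ra\mapsto\la x_{k,i}:\la k,i\ra\in N\ra$, where $N=\{\la k,i\ra:k<m,\ i<n_k\}$, is an isomorphism. Enumerating $N$ as $\{0,\dots,n-1\}$ in some order, this exhibits a product of $n=\sum_k n_k$ factors from $\CC$.

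Minimality is clear: any $\cong$-closed class $\CD\supset\CC$ closed under finite products contains every $\prod_{i<n}\X_i$ with $\X_i\in\CC$, hence all of $\CC_1$. Therefore $\la\CC\ra_\Pi=\CC_1$.

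For (b), we apply Fact \ref{T945} to the class $\la\CC\ra_\Pi$, which is $\cong$-closed by definition. This gives $\Y\in\la\la\CC\ra_\Pi\ra_\Sigma$ iff $\Y=\sum_\BI\Z_i$, where $\BI\in(\la\CC\ra_\Pi)^{\rm fin}$ and the $\Z_i\in\la\CC\ra_\Pi$ are pairwise disjoint.

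There is a subtlety about the index poset. Fact \ref{T945} writes $\BI\in\CC^{\rm fin}$, where $\CC^{\rm fin}$ is the class of finite partial orders, so the index poset ranges over all finite posets. We read (b) the same way, with $\BI\in\CC^{\rm fin}$ meaning a finite partial order, consistent with the paper's usage.

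Forward direction. By (a), each $\Z_i\cong\prod_{j<n_i}\X_{i,j}$ with $\X_{i,j}\in\CC$. Replacing $\X_{i,0}$ by an isomorphic copy (allowed since $\CC$ is $\cong$-closed), for instance $\{i\}\times X_{i,0}$, we may assume the product domains $X_i=\prod_{j<n_i}X_{i,j}$ are pairwise disjoint; they are disjoint because their first coordinates differ. By Fact \ref{T043}(b) the products are unchanged up to isomorphism. Then Fact \ref{T200}(a) gives $\Y=\sum_\BI\Z_i\cong\sum_\BI\prod_{j<n_i}\X_{i,j}$.

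Converse. Each $\prod_{j<n_i}\X_{i,j}$ lies in $\la\CC\ra_\Pi$, so the sum lies in $\la\la\CC\ra_\Pi\ra_\Sigma$. That class is $\cong$-closed, so $\Y$ lies in it as well.

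The only mildly delicate step is arranging the disjointness of the product domains in (b). Tagging the first factor as above handles it.
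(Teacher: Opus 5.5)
Your proof is correct and follows essentially the same route as the paper. In (a) you check that the right-hand side contains $\CC$, is $\cong$-closed, is closed under finite products and is minimal; you handle an $m$-fold product at once via associativity, where the paper treats two factors and then uses induction. In (b) you apply Fact \ref{T945} to $\la\CC\ra_\Pi$, together with (a) and Fact \ref{T200}(a), exactly as the paper does, and your explicit tagging of the first factor by $\{i\}\times X_{i,0}$ justifies the disjointness of the product domains more carefully than the paper's brief appeal to $\cong$-closedness.
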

\dok
(a) Let $\CC '$ be the r.h.s.\ of the equality.
If $\X \in\CC $, then $\X =\prod _{i<1}\X_i$, where $\X_0=\X$; thus $\CC \subset\CC'$.
It is evident that the class  $\CC'$ is $\cong$-closed.
If  $\CC' \ni\Y ' \cong \prod _{i<m}\X_i' $ and $\CC' \ni\Y ''\cong \prod _{i<n}\X_i '' $,
then by Fact \ref{T043}(b) $\Y' \times \Y ''\cong \prod _{i<m+n}\X_i $, where $\X _i=\X_i'$, for $i<m$, and $\X _{m+i}=\X_i''$, for $i<n$.
Thus $\CC'$ is closed under products of two and (by induction) of finitely many factors.
If $\CC \subset \CD$, where  $\CD\subset \Mod _{L_b}$ is a class closed under finite direct products and isomorphism,
and if $\CC '\ni\Y \cong \prod _{i<n}\X_i $, where $\X_i\in \CC $, for $i<n$,
then $\Y\in \CD$.
Thus $\CC' \subset \CD$,
which shows the minimality of $\CC'$ and, hence $\la \CC \ra_{\Pi}=\CC'$.

(b) By (\ref{EQ981}) $\Y\in \la\la \CC \ra_{\Pi} \ra _{\Sigma}$
iff $\Y =\sum _\BI \Y _i$, where
$\BI \in \CC ^{\rm fin}$ and
$\Y _i \in  \la \CC \ra_{\Pi} $, for $i\in I$, and
$Y_i \cap Y_j=\emptyset$, for different $i, j\in I$.
By (a) for each $i\in I$ there is $n_i\in \N$
such that  $\Y _i \cong \X_i:=\prod _{j<n_i}\X_{i,j}$, where $\X_{i,j}\in \CC $, for $j<n_i$.
Since the class $\la \CC \ra_{\Pi}$ is $\cong$ closed
we can assume that $X_i \cap X_j=\emptyset$, for different $i, j\in I$;
so, the sum $\sum _\BI \X _i$ is well defined
and, by Fact \ref{T200}(a), $\Y =\sum _\BI \Y _i\cong \sum _\BI \X _i=\sum _{\BI}\prod _{j<n_i}\X_{i,j}$.
Conversely, under the assumptions we have $\Y\cong \sum _{\BI}\Y_i$,
where $\Y_i:=\prod _{j<n_i}\X_{i,j} \in \la \CC \ra_{\Pi}$, for $i\in I$.
Thus $\sum _{\BI}\Y_i \in \la\la \CC \ra_{\Pi} \ra _{\Sigma}$
and $\Y\in \la\la \CC \ra_{\Pi} \ra _{\Sigma}$,
since $\la\la \CC \ra_{\Pi} \ra _{\Sigma}$ is $\cong$-closed
(see Fact \ref{T945}).
\hfill $\Box$
\paragraph{Products of rooted trees}
By $\CC^{\rm rt}$ we denote the class of rooted trees. By Theorem 4.12 of \cite{Kprod1} we have
\begin{fac}\label{T544}
If $\X =\prod  _{i<n}\X_i$, where $\X _i\in\CC ^{\rm rt}$, for $i<n$, $\CT:=\Th (\X )$ and $\CT _i:=\Th (\X _i)$, $i<n$, then\\[-5mm]
\begin{itemize}\itemsep -0.5mm
\item[\rm (a)] $\Y \equiv \prod _{i<n} \X _i $ iff $\;\Y \cong\prod _{i<n} \Y _i$, where $\Y _i \equiv \X _i$, for each $i<n$;
\item[\rm (b)] The theory $\CT$ satisfies Vaught's conjecture:
               $I(\CT )=\k:=\prod _{i<n}I(\CT _i)$, if $\k\in \{1,\o,\c \}$,\\
               and $I(\CT ) \in [3,\o )$, otherwise.
\end{itemize}
\end{fac}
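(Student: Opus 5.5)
The plan is to prove (a) by a first-order analysis of the ``coordinate axes'' of a product of rooted trees, and then get (b) by counting via a unique-factorization argument. We may discard factors of size $1$, since they do not change the product, and so assume every $\X_i$ has size $>1$. The direction ``$\Leftarrow$'' of (a) is immediate from Fact \ref{T043}(c). For ``$\Rightarrow$'' the idea is to recover the factors inside $\X$ by formulas.

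Put $r=\min\X=\la \min\X_i:i<n\ra$. For $x\in X$ the set $(\cdot ,x]$ is the product of the chains $(\cdot ,x_i]$. Hence it is a chain iff at most one coordinate of $x$ is not a root. So the set $A$ of \emph{axis elements} $x\neq r$ with $(\cdot ,x]$ linear is definable without parameters. On $A$, two elements lie on different axes iff they are incomparable and have a common upper bound. Elements on the same axis that are incomparable have no common upper bound, since the factor is a tree. Thus ``same axis'' is definable and is an equivalence relation with exactly $n$ classes $A_0,\dots,A_{n-1}$, where $A_i\cup\{r\}\cong\X_i$.

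Moreover, each $y$ is the supremum of the unique $n$-tuple of elements $a_i\in A_i\cup\{r\}$ below it that are maximal on their axes. The map $y\mapsto\la a_i:i<n\ra$ is an isomorphism onto $\prod_{i<n}(A_i\cup\{r\})$ with the coordinatewise order.

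All these properties, for the fixed $n$, can be written as one sentence $\s_n$:
\begin{itemize}
\item a least element exists;
\item ``same axis'' is an equivalence relation on $A$ with exactly $n$ classes;
\item for each $y$ and each class, the elements of that class below $y$ form a chain with a largest element or are absent;
\item $y$ is determined by these $n$ components, every choice of components occurs, and the order is coordinatewise;
\item each $A_i\cup\{r\}$ is a rooted tree.
\end{itemize}
Since $\X\models\s_n$, any $\Y\equiv\X$ satisfies $\s_n$ and is therefore isomorphic to $\prod_{i<n}\Y_i$ with $\Y_i$ rooted trees.

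It remains to show that $\Y_i\equiv\X_{\pi(i)}$ for some permutation $\pi$, which after reindexing via Fact \ref{T043}(a) gives $\Y_i\equiv\X_i$. I expect this to be the main obstacle, because the axes are definable only after choosing a parameter $a\in A_i$, and the factors may be permuted. I would handle it as follows. Group the indices of $\X$ by the theories $\CT_i$. For each theory $\CT$ occurring exactly $k$ times, the sentence ``there are exactly $k$ axis classes $C$ such that the structure $C\cup\{\min\}$ satisfies $\t$'' transfers to $\Y$, for every $\t\in\CT$. This is expressed by relativizing $\t$ to the formula ``$v=\min$ or $v$ is on the same axis as $w$'', with a parameter $w$, as in Fact \ref{T704}. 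Using compactness over a single class, or more simply by taking finitely many distinguishing sentences for the finitely many distinct theories, we then get a bijection between the factors of $\Y$ and those of $\X$ that preserves theories.

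For (b), the canonicity of the axis decomposition gives unique factorization: $\prod_{i<n}\Y_i\cong\prod_{i<n}\Y_i'$ (with factors of size $>1$) iff the multisets of isomorphism types $\{[\Y_i]\}$ and $\{[\Y_i']\}$ coincide, since an isomorphism maps axes to axes. By (a), the countable models of $\CT$ thus correspond to choices of countable $\Y_i\models\CT_i$, modulo permuting indices that carry the same theory. Each $I(\CT_i)$ is at most countable or $\c$ by Steel's theorem for trees, which gives VC for $\CT$. If $\k=\prod I(\CT_i)$ is $1$, $\o$ or $\c$, the count of multisets equals $\k$. If $\k$ is finite and $>1$, the multiset count is finite, at most $\k$, and at least $3$. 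The bound $\geq 3$ comes from the known fact that a complete theory with more than one countable model has at least three, applied to a factor with $I(\CT_i)\ge 3$ together with the multiset count.
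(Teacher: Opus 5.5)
\textbf{Verdict: your proof is correct.} The paper does not prove this statement; it quotes it from Theorem 4.12 of \cite{Kprod1}. You derive it from the same kind of machinery the paper develops afterwards. This is not circular, because Theorem~\ref{T915} and the transfer argument of Lemma~\ref{T918} and Theorem~\ref{T939}, which your steps parallel, never use Fact~\ref{T544}.

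\textbf{Comparison with the paper's Sections 3 and 5.} Your $\s_n$ plays the role of $\t_{\CC_n}$, with two differences.
First, you axiomatize outright that $y\mapsto\la\max((\cdot,y]\cap A_i):i<n\ra$ (with $r$ when the intersection is empty) is a bijection onto the coordinatewise-ordered product. This makes the converse direction immediate. The paper instead encodes the coordinates through suprema ($\mu$, $\l_n$) and has to rebuild the isomorphism in Claims~\ref{T909} and~\ref{T910}.
Second, you relativize to ``$v=\min$ or $v$ lies on the axis of $w$''. This defines $\{r\}\cup A_i\cong\X_i$ itself, so Fact~\ref{T704} applies directly. The paper relativizes to $\r(w,v)$, i.e.\ to $A_i\cong\X_i^+$, and therefore needs the Stone-space translation $\t\mapsto\s_\t$ of Lemma~\ref{T938}; your choice avoids that lemma entirely.

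\textbf{Part (b).} Your counting is sound: unique factorization holds because isomorphisms preserve the parameter-free definable axes; models are counted as multisets of isomorphism types; Steel's theorem gives VC for each $\CT_i$; Vaught's never-two theorem gives the lower bound $3$. The count also explains why in the ``otherwise'' case $I(\CT)$ may be strictly smaller than $\k$. For example, two factors sharing a theory with $3$ countable models would yield $6$ models, not $9$.

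\textbf{One step needs the correction you yourself hint at.} For an arbitrary $\t\in\CT$ it is false that exactly $k$ axis classes of $\X$ satisfy $\t$, since a factor with a different theory may satisfy $\t$ too. Count instead the classes satisfying $\t\land\delta$, where $\delta\in\CT$ is false in each of the finitely many other factor theories. Then:
\begin{itemize}
\item exactly $k$ classes of $\Y$ satisfy $\delta$;
\item each of them satisfies every $\t\in\CT$, since otherwise fewer than $k$ would satisfy $\t\land\delta$;
\item the multiplicities add up to $n$, so every class of $\Y$ is matched with a factor of $\X$ having the same theory.
\end{itemize}
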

\section{Products of n rooted trees: first-order definability}\label{S3}
Our goal is to confirm VC for each partial order of the form $\X=\sum _{\BI}\prod _{j<m_i}\X_{i,j}$,
where $\BI$ is a finite partial order and  $m_i \in \N$ and $\X_{i,j}$, $j<m_i$, are rooted trees, for each $i\in I$.
Since the products $\X _i:=\prod _{j<m_i}\X_{i,j}$, $i\in I$, are partial orders with a smallest element
the theory $\CT :=\Th (\X)$ is an FLD$_0$-theory
and in order to apply Fact \ref{T712} it remains to be shown that $\CT$ is actually Vaught's (see (\ref{EQ741})).
Clearly $\sum _{\BI}\X _i \in \CD (\CT)$ and, by Fact \ref{T544}(b), VC is true for $\Th (\X _i)$, for each $i\in I$.
Thus, roughly speaking, if for each $n\in \N$ there is  a first order sentence $\t _n$ is saying ``I am a product of $n$ rooted trees",
then $\t _{m_i}\in \Th (\X _i)$ and $\t _{m_i}$ forces VC in the sense of (\ref{EQ741}), for $i\in I$; so, $\CT$ is actually Vaught's.

Clearly $\t _1$ exists.
But, strictly speaking (and working in ZFC),
for $n>1$ the class of products of $n$ rooted trees is not closed under isomorphism
and, hence, it is not first-order axiomatizable.
For example, the ordinal $\o$ is a rooted tree,
$\X :=\la \o\times \o, \leq ^\X\ra$ is a product of two rooted trees
and if $f:\o\times \o\rightarrow \o$ is a bijection,
and $\Y=\la \o \leq ^\Y\ra$, where $m\leq ^\Y n$ iff $f^{-1}(m)\leq ^\X f^{-1}(n)$, for $m,n\in \o$,
then, clearly, $\Y\cong \X$.
But $\Y$ is not a direct product of two rooted trees,
because its domain $\o$ is not a set of ordered pairs!
Namely, an ordered pair $\la x,y\ra :=\{\{x\},\{x,y\}\}$ is a non-empty set and $\emptyset \in \o$. So for $n\geq 2$ let
$$
\CC _n :=\{ \X \in \Mod _{L_b}:\X \mbox{ is isomorphic to a direct product of $n$ rooted trees of size $>1$}\}.
$$
Excluding the factors of size 1 in the definition of $\CC _n$ is not a restriction at all,
since $\prod _{i\in I}\X _i \cong \prod _{i\in J}\X _i$, if $J:=\{ i\in I : |X_i|>1\}\neq\emptyset$,
and if $J=\emptyset$, then the product is a trivial, one-element poset.

So, the rest of this section is devoted to a proof of the following statement.
\begin{te}\label{T915}
For each $n\geq 2$ there is an $L_b$-sentence $\t_{\CC _n}$ such that for each $L_b$-structure $\X$ we have
\begin{equation}\label{EQ958}
\X \in \CC _n \Leftrightarrow \X \models \t_{\CC _n}.
\end{equation}
\end{te}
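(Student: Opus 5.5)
Let $\X=\prod_{i<n}\X_i$ be a product of rooted trees of size $>1$, with roots $r_i$. The plan is to write down first-order properties of $\X$ that single out the $n$ "axes" $A_i:=\{x: x(k)=r_k \mbox{ for } k\neq i\}$, each a copy of $\X_i$, and then to express first-order that $\X$ is canonically isomorphic to $\prod_{i<n} A_i$. The isomorphism is meant to be $x\mapsto\la \pi_i(x):i<n\ra$, where $\pi_i(x)$ is the largest element of $A_i$ below $x$.

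We first collect the facts that hold in every such product.

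\begin{itemize}
\item $\X$ has a smallest element $0$.
\item Call $a\neq 0$ \emph{primitive} if $(\cdot,a]$ is a chain, i.e.\ $a$ lies on a single axis. The set $P$ of primitive elements is definable and equals $\bigcup_{i}A_i\setminus\{0\}$.
\item Define $a\sim b$ on $P$ by: $a,b$ are comparable, or they have a common lower bound in $P$. This is an equivalence relation, and two primitives from different axes are exactly those for which $\sup\{a,b\}$ exists and $(\cdot,a]\cap(\cdot,b]=\{0\}$ "independently". It is cleaner to use: $a\not\sim b$ iff $a\wedge b=0$ and $a\vee b$ exists. Inside one tree axis, two incomparable elements have no join.
\item There are exactly $n$ classes, expressible by a sentence of the form "there are $n$ pairwise $\not\sim$ primitives and no $n+1$". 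Each class together with $0$ is a rooted tree.
\item Every $x$ is the join of $n$ elements, one from each class (or $0$): $x=\bigvee_{i<n}\pi_i(x)$ with $\pi_i(x)=\max((\cdot,x]\cap(A_i\cup\{0\}))$.
\item The order is componentwise: $x\leq y$ iff $\pi_i(x)\leq\pi_i(y)$ for all $i$.
\item For every choice of $a_i\in A_i\cup\{0\}$ the join $\bigvee a_i$ exists and satisfies $\pi_i(\bigvee a_j)=a_i$.
\end{itemize}

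Since $n$ is fixed, all of these are first-order. We write "$x$ is the join of $a_0,\dots,a_{n-1}$" with a quantifier over upper bounds, and $\pi_i$ is definable from the classes, which are given by parameters $p_0,\dots,p_{n-1}$ pairwise $\not\sim$. Let $\t_{\CC_n}$ be the conjunction of the partial-order axioms and $\exists p_0\dots p_{n-1}$ of the above clauses. Relativized to the classes $[p_i]\cup\{0\}$, these clauses also state that each class is a rooted tree with more than one element.

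Soundness, i.e.\ that every member of $\CC_n$ satisfies $\t_{\CC_n}$, is a direct verification in $\prod\X_i$. The one delicate point is the characterization of "different axes" via joins and meets, which relies on each factor being a tree: incomparable elements of a tree have no upper bound, whereas $a\in A_i$ and $b\in A_j$ with $i\ne j$ have join $a\vee b$ with both coordinates set.

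Completeness, i.e.\ that $\t_{\CC_n}$ forces membership in $\CC_n$, goes as follows. Given $\Y\models\t_{\CC_n}$, let $\Y_i$ be the substructure on $[p_i]\cup\{0\}$; it is a rooted tree of size $>1$. Define $F:\Y\to\prod_i\Y_i$ by $F(y)=\la\pi_i(y)\ra_i$. The "order is componentwise" clause makes $F$ an order embedding, and the "every tuple has a join" clause makes it surjective. Note that we never need the $\Y_i$ to be pairwise disjoint for this, since we take a product, not a sum. Hence $\Y\in\CC_n$.

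The main obstacle I expect is the correct first-order choice of the relation separating the axes, together with the verification that it is an equivalence relation with the $\pi_i$ well defined in arbitrary models of the sentence. To sidestep proving transitivity from primitive axioms, I would simply include in $\t_{\CC_n}$ the explicit clauses "$\sim$ is an equivalence relation on $P$", "$P\cup\{0\}$ splits into exactly the $n$ classes of $p_0,\dots,p_{n-1}$", and "each $\pi_i(y)$ exists". Each of these holds in genuine products, so soundness only requires checking them there.
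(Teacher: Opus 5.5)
Your proposal is correct. Its skeleton is the paper's: the primitive points $D_{\f_1}$, a relation splitting them into the $n$ axes, and every point a join of axis points. The converse, however, is organized differently.

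\textbf{The paper's route.} The sentence $\t_{\CC_n}=\f^{\rm po}_0\land\ve_n\land\nu\land\mu\land\l_n$ does not name the classes and is lean. It only says three things: every class meets each $(\cdot,x]$ in a chain; admissible $n$-tuples have joins; and each point is the join of such a tuple, unique up to a permutation. Consequently Claims \ref{T909} and \ref{T910} have to do real work. They fix an enumeration of the classes, turn uniqueness up to permutation into genuine uniqueness, and derive the order equivalence (\ref{EQ934}) for $\bar x\mapsto\bigvee_{i<n}x_i$ by the replacement argument.

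\textbf{Your route.} You name the classes by existentially quantified parameters $p_i$. You then write into the sentence exactly the properties that make $y\mapsto\la\pi_i(y)\ra_{i<n}$ an isomorphism: the projections exist, the order is componentwise, and $\pi_i(\bigvee_j a_j)=a_i$. Completeness is then immediate, and the work moves to soundness. There every clause is a direct coordinate computation in $\prod_{i<n}\X_i$. The tree hypothesis is used only to see that axis points are primitive and that incomparable points of one axis have no upper bound. This also replaces the lattice argument of Lemma \ref{T550}(d). Your route is more elementary and transparent; the paper's gives a shorter, more structural axiom set.

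\textbf{One caution.} Your first definition of $\sim$ (comparable, or a common lower bound in $P$) does not work. In a product its classes are the connected components $S_{i,j}$ of (\ref{EQ590}), not the axes $A_i$. So ``exactly $n$ classes'' fails whenever some $\X_i^+$ is disconnected. Switching to ``$a\not\sim b$ iff $a\wedge b=0$ and $a\vee b$ exists'' is therefore necessary, not merely cleaner. It is essentially the paper's $\r$ from (\ref{EQ935}), with ``no join'' in place of ``no common upper bound''.
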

\subsection{Detection of $\t_{\CC _n}$ and a proof of the implication ``$\Rightarrow$" in (\ref{EQ958})}
We recall that our language is $L_b:=\la \leq\ra$
and note that the sentence $\t_{\CC _n}$ will be a conjunction including a sentence $\f ^{\rm po}_{0}$
saying that the structure is a partial order with a smallest element.
Thus, our goal is to prove that $\CC_n =\Mod (\t_{\CC _n})$
and, since $\CC_n \cup \Mod (\t_{\CC _n}) \subset \Mod (\f ^{\rm po}_{0})$
and a smallest element is definable in all models of $\f ^{\rm po}_{0}$,
in our formulas we will use abbreviations $v>0$, $v=0$ etc.\ (which can be replaced by pure $L_b$-formulas).
Let $\CC ^{\rm rt}_{>1}$ denote the class of rooted trees of size $>1$.

We recall some facts from \cite{Kprod1}.
Let $\X = \prod _{i<n}\X_i$,
where $\X _i \in\CC ^{\rm rt}_{>1}$ and $0_i:=\min \X _i$, for $i<n$; then $\bar 0:=\la 0_0,\dots, 0_{n-1}\ra =\min \X$.
For $i<n$, let $X_i^+ :=X_i\setminus \{ 0_i\}$
and let $\{X_{i,j} :j\in J_i\}$ be the partition of the suborder $\X _i^+$ of $\X _i$ into its connectivity components.
Then $\X _i ^+ =\bcd _{j\in J_i}\X_{i,j}$ is a disjoint union of connected trees.
Defining $X^{(m)}=\{ \bar x\in \prod _{i<n}X_i: |\{ i<n:x_i>0_i \}|=m\}$, for $m\leq n$, we have
\begin{equation}\label{EQ589}\textstyle
X^{(m)}=\bigcup _{K\in[n]^m} \bigcup _{\bar j\in \prod _{i\in K}J_i}\Big\{ \bar x\in \prod _{i<n}X_i : \forall i\in K \;(x_i\in X_{i,j_i}) \land \forall i\in n\setminus K \;(x_i=0_i)\Big\}
\end{equation}
and $\{ X^{(m)}:0\leq m\leq n\}$ is a partition of the set $X$. In particular, $X^{(0)}=\{ \bar 0\}$ and for $m=1$
\begin{eqnarray}
X^{(1)} & = &\textstyle\bigcup _{i<n}\bigcup _{j\in J_i}S_{i,j},
              \;\mbox{ where } S_{i,j} :=\{ 0_0\} \times \dots \times \{ 0_{i-1}\} \times X_{i,j}\times \{ 0_{i+1}\} \times \dots \times \{ 0_{n-1}\},\label{EQ590}\\
        & = &\textstyle\bigcup _{i<n}A_i,   \;\;\mbox{ where }  A _i:= \bigcup _{j\in J_i} S_{i,j}, \mbox{ for }i<n.\label{EQ591}
\end{eqnarray}
Let $\X^{(1)}, \X_{i,j},\S_{i,j}, \A _i$ denote the corresponding suborders of $\X$.
By Claims 4.2--4.4 and 4.8 \cite{Kprod1} we have
\begin{fac}\label{T531}\rm
If $\X = \prod _{i<n}\X_i$, where $\X _i\in \CC ^{\rm rt}_{>1}$, for $i<n$, then we have
\begin{itemize}\itemsep -0.5mm
\item[\rm (a)] $X ^{(1)}= D_{\f _1,\X }$, where $\f _1 (v)$ is a formula saying that $v$ is not a minimal element and that the set $(\cdot ,v]$ is linearly ordered\footnote{Say,
\begin{eqnarray}
\f _1 (v) &:= & \exists u \;(u<v)\land  \forall u,w \;(u,w \leq v \Rightarrow u\leq w \lor w\leq u), \label{EQ943}\\[1.5mm]
\r (u,v)  &:= & \f _1(u) \land \f _1(v) \land \big(\neg \exists w \;\; (u\leq w \land v\leq w) \lor \exists w \;(\f _1 (w)\land w\leq u \land w\leq v) \big),\label{EQ935}\\
\ve_n     &:= &\textstyle \forall v_0,v_1,v_2 \;\big(\bigwedge _{i<3}\f _1 (v_i)\Rightarrow
                      \r (v_0,v_0) \land
                      (\r (v_0,v_1)\Rightarrow \r (v_1,v_0)) \land
                      (\r(v_0,v_1) \land \r(v_1,v_2) \Rightarrow \r(v_0 , v_2))\big)  \nonumber\\
          &   &\textstyle\land \;\exists v_0,\dots ,v_{n-1} \;\big(\bigwedge _{i<n}\f _1 (v_i)\land \bigwedge _{i<j<n} \neg \r(v_i, v_j)
                      \land \forall v_n \;(\f _1 (v_n)\Rightarrow \bigvee _{i<n}\r (v_n,v_i))\big).\label{EQ948}
\end{eqnarray}
\label{F000}};
\item[\rm (b)] $D _{\r , \X }= \bigcup _{i<n} A_i ^2 $
               (that is, $D _{\r , \X }$ is the equivalence relation on the set $X^{(1)}$ determined by the partition $\{ A_i:i<n\}$),
               where $\r(u,v)$ is a formula (see footnote \ref{F000}) saying that $u$ and $v$ are in $X^{(1)}$ and have no upper bound in $\X$ or have a lower bound  belonging to $X^{(1)}$;
\item[\rm (c)] $\S_{i,j} \cong  \X_{i,j} , \mbox{ for }j\in J_i, \;\mbox{ and }\;
               \A _i = \bcd _{j\in J_i} \S_{i,j} \cong \X _i ^+  \;\mbox{ and }\;
               \X^{(1)}=\bcd _{i<n}\A_i$;
\item[\rm (d)] For each $m \in [1,n]$ each $m$ elements of $X^{(1)}$ belonging to different sets $A_i$ have a supremum.
\end{itemize}
\end{fac}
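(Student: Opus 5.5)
The statement is Fact~\ref{T531}. I would prove it directly from the coordinatewise definition of the order on $\X=\prod_{i<n}\X_i$. The basic observation is
\[
(\cdot ,\bar x]=\textstyle\prod_{i<n}(\cdot ,x_i]
\]
for every $\bar x\in X$. Each factor $(\cdot ,x_i]$ is a chain, because $\X_i$ is a tree. I would also use one lemma about a single rooted tree, proved next.

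\emph{Lemma.} In $\X_i^+$, two elements $x,y$ lie in the same connectivity component iff there is $z>0_i$ with $z\leq x$ and $z\leq y$.

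Call this relation $\sim$. It is reflexive and symmetric. For transitivity, suppose $z\leq x,y$ and $z'\leq y,w$ with $z,z'>0_i$. Then $z,z'\leq y$, so $z$ and $z'$ are comparable, since $(\cdot ,y]$ is a chain. Their minimum is a nonzero common lower bound of $x$ and $w$. Thus $\sim$ is an equivalence relation on $X_i^+$. It contains comparability, since if $x\leq y$ we can take $z=x$. It is also contained in the component relation. Hence $\sim$ coincides with the component relation. A consequence I will use: elements of different components $X_{i,j}\neq X_{i,j'}$ are incomparable and have no common upper bound. Indeed, a common upper bound $w$ would make them comparable, as both lie in the chain $(\cdot ,w]$.

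(a) Let $\bar x\neq \bar 0$. If $\bar x$ has a single nonzero coordinate $i$, then $(\cdot ,\bar x]$ is isomorphic to $(\cdot ,x_i]$, so it is a chain. If $x_i,x_k>0$ with $i\neq k$, consider the two elements that equal $x_i$ in coordinate $i$, respectively $x_k$ in coordinate $k$, and are $0$ elsewhere. Both lie below $\bar x$, and they are incomparable. Finally, $\bar 0$ is the only minimal element. Hence $D_{\f_1,\X}=X^{(1)}$.

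(c) On $S_{i,j}$, the projection to coordinate $i$ is an isomorphism onto $\X_{i,j}$, since all other coordinates are $0$. By the consequence of the Lemma, elements of different $S_{i,j}$ inside $A_i$ are incomparable, so $\A_i=\bcd_{j}\S_{i,j}\cong\X_i^+$. Now take elements of $A_i$ and $A_k$ with $i\neq k$. One is nonzero exactly in coordinate $i$ and the other exactly in coordinate $k$, so they are incomparable. This gives $\X^{(1)}=\bcd_{i<n}\A_i$.

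(d) Take elements $\bar a^l\in A_{i_l}$, $l<m$, with distinct indices $i_l$. Let $\bar s$ have coordinate $i_l$ equal to $a^l_{i_l}$ and all other coordinates equal to $0$. Then $\bar s$ is an upper bound of all $\bar a^l$. Any upper bound $\bar w$ satisfies $w_{i_l}\geq a^l_{i_l}$ for each $l$, and $w_k\geq 0_k$ elsewhere. Hence $\bar s\leq \bar w$, and $\bar s$ is the supremum.

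(b) Let $u,v\in X^{(1)}$; this handles the conjuncts $\f_1(u)\land\f_1(v)$ of $\r$ via (a). There are three cases.
\begin{itemize}
\item If $u,v$ lie in the same $S_{i,j}$, the Lemma gives a nonzero common lower bound $z$ in $X_{i,j}$. The corresponding element of $S_{i,j}$ is a common lower bound lying in $X^{(1)}$, so $\r(u,v)$ holds.
\item If $u\in S_{i,j}$ and $v\in S_{i,j'}$ with $j\neq j'$, suppose $w$ is a common upper bound. Then $w_i$ bounds $u_i$ and $v_i$ from above, which contradicts the consequence of the Lemma. So there is no upper bound, and $\r(u,v)$ holds.
\item If $u\in A_i$ and $v\in A_k$ with $i\neq k$, then by (d) they have an upper bound. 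Any common lower bound $w$ has $w_l=0_l$ for $l\neq i$ because $w\leq u$, and $w_i=0_i$ because $w\leq v$. So $w=\bar 0\notin X^{(1)}$, and $\r(u,v)$ fails.
\end{itemize}
Thus $D_{\r,\X}=\bigcup_{i<n}A_i^2$.

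The only non-routine step is the Lemma identifying the components of $\X_i^+$ via common nonzero lower bounds. Once that is in place, everything else is a coordinatewise check.
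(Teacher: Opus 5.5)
Your proof is correct. The paper gives no proof of this Fact: it imports items (a)--(d) from Claims 4.2--4.4 and 4.8 of the earlier paper on products of rooted trees. So what you have written is a self-contained replacement for a citation, not a reconstruction of an in-paper argument.

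Your route reduces everything to three ingredients:
\begin{itemize}
\item the coordinatewise order;
\item the identity $(\cdot,\bar x]=\prod_{i<n}(\cdot,x_i]$;
\item one fact about a single tree: two points of $\X_i^+$ lie in the same component iff they have a common lower bound $>0_i$, so points from different components have no common upper bound.
\end{itemize}
This characterization is exactly what the paper later adopts as the definition of the components of a tree, in the footnote of Section~7. Your lemma therefore also shows that this definition agrees with the comparability-graph one.

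The checks in (a), (c) and (d) are sound, and so is the three-case analysis in (b). In the third case of (b) you correctly use (d) to rule out the first disjunct of $\r$. You then use (a) to see that the only common lower bound, $\bar 0$, fails $\f_1$.

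Your version makes the Fact verifiable without the external source. It also shows exactly where the tree hypothesis enters: the chain $(\cdot,x_i]$ in (a), transitivity in your lemma, and the no-common-upper-bound consequence in the second case of (b). The paper's citation buys only brevity.
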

\begin{cla}\label{T916}\rm
If $\X = \prod _{i<n}\X_i$, where $\X _i\in \CC ^{\rm rt}_{>1}$, $i<n$, then $\X \models \f ^{\rm po}_{0}\land \ve _n$,
where $\ve_n$ is a sentence (see footnote \ref{F000}) saying that $D _{\r , \cdot }$ is an equivalence relation on the set $D_{\f _1,\cdot }$ with exactly $n$ equivalence classes.
\end{cla}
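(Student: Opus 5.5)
The claim follows almost directly from Fact \ref{T531}; it remains to verify the two conjuncts separately. For $\f ^{\rm po}_{0}$, we first recall that a direct product of partial orders is a partial order under the coordinatewise ordering. Reflexivity, antisymmetry and transitivity all hold coordinatewise. Moreover, $\bar 0=\la 0_0,\dots ,0_{n-1}\ra$ is the smallest element, since $0_i\leq x_i$ in $\X_i$ for every $\bar x\in X$ and $i<n$. So $\X\models \f ^{\rm po}_{0}$.

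For $\ve _n$ we unpack the formula in (\ref{EQ948}). Its first conjunct says that the relation defined by $\r$ is reflexive, symmetric and transitive on $D_{\f _1,\X}$. By Fact \ref{T531}(a), $D_{\f _1,\X}=X^{(1)}$. By Fact \ref{T531}(b), $D_{\r ,\X}=\bigcup _{i<n}A_i^2$. By (\ref{EQ591}) and Fact \ref{T531}(c), the sets $A_i$, $i<n$, form a partition of $X^{(1)}$, so $\bigcup _{i<n}A_i^2$ is exactly the equivalence relation whose classes are the $A_i$. This gives the first conjunct.

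For the second conjunct we need $n$ pairwise non-equivalent elements of $X^{(1)}$ such that every element of $X^{(1)}$ is equivalent to one of them. Here we use the assumption $|X_i|>1$. It gives $X_i^+\neq\emptyset$, hence $J_i\neq\emptyset$, and so each $A_i$ is nonempty. Concretely, we pick $x_i\in X_i^+$ and let $v_i$ be the tuple that has $x_i$ at coordinate $i$ and $0_k$ at every coordinate $k\neq i$. Then $v_i\in A_i$. For $i\neq j$ the elements $v_i,v_j$ lie in different blocks, so $\neg\r(v_i,v_j)$ by Fact \ref{T531}(b). Any $v_n$ satisfying $\f _1$ lies in some $A_i$ and is therefore $\r$-related to $v_i$. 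Hence $\X\models\ve _n$.

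We do not expect a real obstacle here, since all the substantive work (that $\f _1$ and $\r$ define $X^{(1)}$ and the partition into the $A_i$) is already contained in Fact \ref{T531}, quoted from \cite{Kprod1}. The only point needing care is that the $A_i$ are nonempty and pairwise disjoint, so that there are exactly $n$ classes. Nonemptiness is where the size condition $>1$ is used. Disjointness holds because an element of $A_i$ has its unique nonzero coordinate at position $i$.
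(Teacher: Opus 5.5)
Your proof is correct and follows the paper's route. The paper declares $\X\models\f^{\rm po}_0$ evident and derives $\X\models\ve_n$ from Fact \ref{T531}(b). You spell out what it leaves implicit, in particular that each $A_i$ is nonempty because $|X_i|>1$, which is what makes the number of classes exactly $n$.
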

\dok
It is evident that $\X \models \f ^{\rm po}_{0}$, while $\X \models \ve _n$ follows from Fact \ref{T531}(b).
\kdok
Further we show that for each $\bar x\in X$ there is a unique $n$-tuple of coordinates
$\la \bar x^0, \dots ,\bar x^{n-1}\ra\in \prod_{i<n} (\{ \bar 0\}\cup A_i)$ such that $\bar x=\bigvee _{i<n}\bar x^i$; i.e., $\bar x=\sup \{\bar x^i:i<n\}$.
(So, by Fact \ref{T531}(c) the suborders $\{ \bar 0\}\cup \A_i \cong \X _i$, $i<n$, of $\X$
can be regarded as ``coordinate trees" for the product $\X$.)
If $\bar x=\bar0$, then, clearly, $\la \bar x^0, \dots ,\bar x^{n-1}\ra=\la \bar 0,\dots,\bar 0\ra$ are such coordinates
and for $\bar x>\bar0$ we have the following lemma.
\begin{lem}\label{T550}
If $\X = \prod _{i<n}\X_i$, where $\X _i\in \CC ^{\rm rt}_{>1}$, for $i<n$, and if $m\in [1,n]$ and $\bar x\in X^{(m)}$, then
\begin{itemize}\itemsep -0.5mm
\item[\rm (a)] For each $i<n$ we have: $(\cdot ,\bar x]\cap A_i=\emptyset$ or $(\cdot,\bar x]\cap A_i$ is a linear order;\\
               also $(\cdot,\bar x]\cap (\{\bar 0\}\cup A_i)$ is the linear order $[\bar 0,\bar x ^i]$, where
               \begin{equation}\label{EQ950}
               \bar x^i:=\la 0_0,\dots ,0_{i-1},x_i,0_{i+1},\dots, 0_{n-1}\ra=\max ((\cdot,\bar x]\cap (\{\bar 0\}\cup A_i));
               \end{equation}
\item[\rm (b)] $(\cdot ,\bar x]= \prod _{i<n}[0_i, x _i]_{\X _i}\cong \prod _{i<n}[\bar 0,\bar x ^i]_\X$ is a distributive lattice,
               $\min (\cdot ,\bar x]=\bar 0$ and $\max (\cdot ,\bar x]=\bar x$;
\item[\rm (c)] If $K\in [n]^m$, where $K=\{ i<n : x_i>0_i\}$, then $\bar x=\bigvee _{i\in K}\bar x^i$, where $\bar x^i$-s are defined by (\ref{EQ950});
               defining $\bar x ^i=\bar 0$, for $i\in n\setminus K$ we have 
\begin{equation}\label{EQ960}\textstyle
\la \bar x^i:i<n\ra\in \prod_{i<n} (\{ \bar 0\}\cup A_i) \quad\mbox{ and }\quad\bar x=\bigvee _{i<n}\bar x^i ;
\end{equation}
\item[\rm (d)] If $\la \bar y^i:i<n\ra\in \prod_{i<n} (\{ \bar 0\}\cup A_i)$  and $\bar x=\bigvee _{i<n}\bar y^i$,
               then $\bar y^i=\bar x^i$, for each $i<n$.
\end{itemize}
\end{lem}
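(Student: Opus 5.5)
The plan is to compute everything coordinatewise in the product $\X=\prod_{i<n}\X_i$, using only two things: $\bar y\leq\bar x$ iff $y_i\leq x_i$ for all $i<n$, and the explicit descriptions of $A_i$ from (\ref{EQ590}) and (\ref{EQ591}). By those descriptions, $\{\bar 0\}\cup A_i$ is exactly the set of tuples whose coordinates other than the $i$-th are all $0$. The order is: part (b) first, since the rest is simplest once the lattice structure of $(\cdot,\bar x]$ is available; then (a), (c) and (d).

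\textbf{Step 1: part (b).} We have $(\cdot,\bar x]=\{\bar y:\forall i<n\;(y_i\leq x_i)\}=\prod_{i<n}[0_i,x_i]_{\X_i}$. Each $[0_i,x_i]_{\X_i}$ is a finite-or-infinite chain, because $\X_i$ is a tree. A product of chains is a distributive lattice under coordinatewise $\min$ and $\max$. Its least element is $\bar 0$ and its greatest element is $\bar x$. The isomorphism $[0_i,x_i]_{\X_i}\cong[\bar 0,\bar x^i]_\X$ is the embedding $t\mapsto\la 0_0,\dots,t,\dots,0_{n-1}\ra$.

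\textbf{Step 2: part (a).} An element $\bar y\in(\cdot,\bar x]\cap(\{\bar 0\}\cup A_i)$ has $y_k=0_k$ for $k\neq i$ and $y_i\leq x_i$. Hence this set equals $\{\la 0,\dots,t,\dots,0\ra:t\in[0_i,x_i]\}=[\bar 0,\bar x^i]$. This is a chain with maximum $\bar x^i$, which gives (\ref{EQ950}). Removing $\bar 0$ shows that $(\cdot,\bar x]\cap A_i$ is either empty (when $x_i=0_i$) or a linear order.

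\textbf{Step 3: part (c).} For $i\in K$ we have $\bar x^i\in A_i$, since $x_i>0_i$ places $x_i$ in some component $X_{i,j}$. For $i\notin K$ we have $\bar x^i=\bar 0$. An upper bound $\bar z$ of all the $\bar x^i$ must satisfy $z_i\geq x_i$ for every $i$, so $\bar z\geq\bar x$. Also $\bar x$ is itself an upper bound. Therefore $\bar x=\bigvee_{i<n}\bar x^i=\bigvee_{i\in K}\bar x^i$, and adding the terms $\bar 0$ does not change the supremum.

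\textbf{Step 4: part (d).} Suppose $\bar y^i\in\{\bar 0\}\cup A_i$, so $\bar y^i=\la 0,\dots,t_i,\dots,0\ra$ for some $t_i\in X_i$. In the product, the supremum of these elements, if it exists, must be $\la t_0,\dots,t_{n-1}\ra$. This requires an argument, because sup is a property of the whole poset. The tuple $\la t_0,\dots,t_{n-1}\ra$ is an upper bound. Any upper bound $\bar z$ satisfies $z_i\geq t_i$ for every $i$, hence lies above that tuple. So the tuple is the least upper bound.

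From $\bar x=\bigvee_i\bar y^i$ we then get $t_i=x_i$ for each $i$, and therefore $\bar y^i=\bar x^i$.

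The only point needing care is the one flagged in Step 4: making sure the supremum is computed in $\X$ and not merely inside $(\cdot,\bar x]$. It is resolved by the coordinatewise argument above, since every upper bound in $\X$ dominates the tuple of coordinates.
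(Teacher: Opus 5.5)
Your proof is correct. For (b) it is the same coordinatewise computation as in the paper. For (a), (c) and (d) you take a more elementary route, using only the explicit description $\{\bar 0\}\cup A_i=\{0_0\}\times\cdots\times X_i\times\cdots\times\{0_{n-1}\}$ and the product order. In (c) you check directly that $\bar x$ is the least upper bound of the $\bar x^i$. In (d) you identify $\bigvee_{i<n}\bar y^i$ as the tuple $\langle t_0,\dots,t_{n-1}\rangle$ and read off $t_i=x_i$.

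The paper instead works through the properties imported in Fact~\ref{T531}. In (a) it uses that $(\cdot,\bar x^i]$ is a chain because $\bar x^i\in X^{(1)}$, and that $\rho$ determines the classes $A_i$, to get $(\bar 0,\bar x^i]\subseteq A_i$. In (c) it takes the existence of $\bigvee_{i\in K}\bar x^i$ from Fact~\ref{T531}(d) and then compares coordinates. In (d) it works in the distributive lattice $(\cdot,\bar x]$ given by (b) and uses $\rho$ to obtain $\bar x^i\wedge\bar y^j=\bar 0$ for $i\neq j$. Distributivity then gives $\bar y^j=\bar y^j\wedge\bigvee_{i<n}\bar x^i=\bar y^j\wedge\bar x^j$, so $\bar y^j\le\bar x^j$, and symmetrically.

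Your argument is shorter and needs nothing beyond the definition of the product order. The paper's argument for (d) is an instance of a general lattice fact: in a distributive lattice with $0$, if $x=\bigvee_i x^i=\bigvee_i y^i$ and $x^i\wedge y^j=0$ for $i\neq j$, then $x^i=y^i$ for all $i$. There the disjointness is supplied by the definable relation $\rho$, which keeps the reasoning in terms of the intrinsic, first-order-visible structure that the sentences $\nu$, $\mu$, $\lambda_n$ later axiomatize.
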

\dok
(a) If $(\cdot ,\bar x]\cap A_i=\emptyset$ we are done.
If $\bar y\in (\cdot,\bar x]\cap A_i$,
then by (\ref{EQ590}) and (\ref{EQ591}) we have $\bar y=\la 0_0,\dots ,0_{i-1},y,0_{i+1},\dots, 0_{n-1}\ra \in S_{i,j}$, for some $j\in J_i$,
thus $y\in X_{i,j}$ and by Fact \ref{T531}(c) $\S_{i,j}\cong \X_{i,j}$.
Since $\bar y \leq _\X \bar x=\la x_0,\dots ,x_{n-1}\ra$ we have $0_i<_{\X _i} y\leq _{\X _i}x_i$;
thus, $y$ and $x_i$ are in the same component of $\X _i^+$ and, hence, $y,x_i\in X_{i,j}$,
and $\bar x^i:=\la 0_0,\dots ,0_{i-1},x_i,0_{i+1},\dots, 0_{n-1}\ra \in S_{i,j}$,
which by (\ref{EQ591}) gives $\bar x^i\in (\cdot,\bar x]\cap A_i$.
We prove that $(\cdot,\bar x^i]\cap A_i =(\cdot,\bar x]\cap A_i$.
Since $\bar x^i\leq _\X\bar x$ the inclusion ``$\subset$" is true.
If $\bar y'\in (\cdot,\bar x]\cap A_i$,
then as for $\bar y$ we have $\bar y'=\la 0_0,\dots ,0_{i-1},y',0_{i+1},\dots, 0_{n-1}\ra $
and $y'\leq _{\X _i}x_i$,
which gives $\bar y'\in (\cdot,\bar x^i]$
and the inclusion ``$\supset$" is true as well.
Since $\bar x^i\in S_{i,j} \subset A_i \subset X^{(1)}$
by Fact \ref{T531}(a) the suborder $(\cdot,\bar x^i]\cap A_i=(\cdot,\bar x]\cap A_i$ of $\X$ is a linear order with a largest element $\bar x^i$.

Since $(\cdot,\bar x]\cap (\{\bar 0\}\cup A_i)=\{\bar 0\}\cup ((\cdot,\bar x]\cap A_i)=\{\bar 0\}\cup ((\cdot,\bar x ^i]\cap A_i)$
it remains to be proved that $\{\bar 0\}\cup ((\cdot,\bar x ^i]\cap A_i)=[\bar 0,\bar x ^i]$.
The inclusion ``$\subset$" is evident.
Conversely, if $\bar y\in (\bar 0,\bar x ^i]$,
then by (\ref{EQ950}) $\bar y=\la 0_0,\dots ,0_{i-1},y,0_{i+1},\dots, 0_{n-1}\ra $, where $0_i <_{\X _i} y\leq _{\X _i} x_i$.
So, $\bar x^i\in A_i$, $\bar y,\bar x^i \in X^{(1)}$ and $\bar y$ is a lower bound for $\{ \bar y,\bar x^i\}$,
which by Fact \ref{T531}(b) gives $\bar y\in A_i$.
Thus $\bar y\in (\cdot,\bar x ^i]\cap A_i$ indeed.

(b) Clearly, $\bar y\in (\cdot ,\bar x]_\X$
iff $0_i\leq _{\X_i} y_i \leq _{\X_i} x_i$, for each $i<n$,
iff $\bar y\in\prod _{i<n}[0_i, x _i]_{\X _i}$.
Since $\X _i$ is a tree its suborder $[0_i, x _i]_{\X _i}$ is a linear order and, hence, a distributive lattice.
So, $\prod _{i<n}[0_i, x _i]_{\X _i}$ is a distributive lattice too.
By (\ref{EQ950}) we have $[0_i, x _i]_{\X _i}\cong [\bar 0,\bar x ^i]_\X$, for $i<n$,
which gives $\prod _{i<n}[0_i, x _i]_{\X _i}\cong \prod _{i<n}[\bar 0,\bar x ^i]_\X$.

(c) Since $\bar x^i\in A_i$, for $i\in K$, by Fact \ref{T531}(d) $\bar y:=\bigvee _{i\in K}\bar x^i$ exists.
Since $\bar x ^i \leq \bar x$, for $i<n$, we have $\bar y\leq \bar x$.
For $i\in K$ we have $\bar x ^i \leq \bar y\leq \bar x$
and, by (\ref{EQ950}), $x_i \leq y_i \leq x_i$; so, $y_i=x_i$.
For $i\in n\setminus K$ we have $y_i\leq x_i =0_i$ and, hence, $y_i= x_i =0_i$ again.
So, $\bar x =\bar y=\bigvee _{i\in K}\bar x^i$.
The rest is evident.

(d) Let $\la \bar y^i:i<n\ra$ be a assumed.
Then $\bar y^i,\bar x^i\in (\cdot ,\bar x]_\X$, for $i<n$,
and by (b) we can work in the lattice $(\cdot ,\bar x]_\X$.
For $i\neq j$ we have $\bar x^i\in \{ \bar 0\}\cup A_i$ and  $\bar y^j\in \{ \bar 0\}\cup A_j$;
so, assuming that there is $\bar z$ such that $\bar 0 <\bar z \leq \bar x^i,\bar y^j$
we would have $\X\models \r [\bar x^i,\bar y^j]$
and by Fact \ref{T531}(b) $i=j$, which is false.
Thus $\bar x^i \land \bar y^j=\bar 0$, for $i\neq j$.
Now, for $j<n$ we have $\bar y^j =\bar y^j \land \bar x= \bar y^j \land \bigvee _{i<n}\bar x^i = \bigvee _{i<n}\bar y^j \land \bar x^i =\bar y^j \land \bar x^j$;
thus $\bar y^j \leq _\X \bar x^j$
and, symmetrically, $\bar x^j \leq _\X \bar y^j$,
which gives $\bar y^j = \bar x^j$, for all $j<n$.
\kdok
The formula $\a _n( v_0,\dots ,v_{n-1},v_n):= \bigwedge _{k < n}(v_k\leq v_n) \land \forall u \;(\bigwedge _{k< n}(v_k\leq u) \Rightarrow v_n \leq u)$
saying that (in partial orders) $v_n=\sup\{v_0,\dots ,v_{n-1}\}$ will be used in the sequel.
\begin{cla}\label{T917}\rm
If $\X = \prod _{i<n}\X_i$, where $\X _i\in \CC ^{\rm rt}_{>1}$, for $i<n$, then $\X \models \nu \land \mu\land \l _n$, where \\[-6mm]
\begin{itemize}\itemsep -1mm
\item $\nu$ is a sentence saying that each two $\r$-equivalent elements of $D_{\f _1,\cdot}$ having an upper bound
      are comparable\footnote{Instead of $v_0,\dots ,v_{n-1}$ and $w_0,\dots ,w_{n-1}$ we write $\bar v$ and $\bar w$\label{F001}
      \begin{eqnarray}
      \nu  & := & \textstyle\forall v,v_0,v_1 \big(\f _1 (v_0) \land \f _1 (v_1)\land \r(v_0,v_1)\land v_0\leq v\land v_1\leq v)\Rightarrow v_0\leq v_1 \lor v_1\leq v_0\big)\label{EQ945}\\
      \mu  & := & \textstyle\forall \bar v \big(\bigwedge _{i<n}(v_i=0\lor\f _1 (v_i))\land
                  \bigwedge _{i<j<n} (v_i,v_j>0 \Rightarrow \neg \r(v_i, v_j))\Rightarrow \exists v_n \;\a _n( \bar v,v_n)\big)\label{EQ946}\\
      \l_n & := & \textstyle\forall v\; \exists \bar v  \;\big[
                                           \bigwedge_{i<n}(v_i=0 \lor \f _1(v_i))\land
                                           \bigwedge_{i<j<n}(v_i,v_j >0 \Rightarrow \neg \r (v_i,v_j)) \land
                                           \a_n (\bar v,v)) \land \nonumber\\
     &    & \textstyle\forall \bar w \big(
                                           \bigwedge_{i<n}(w_i=0 \lor \f _1(w_i))\land
                                           \bigwedge_{i<j<n}(w_i, w_j>0  \Rightarrow \neg \r (w_i,w_j) ) \land
                                           \a_n (\bar w ,v))\label{EQ949}\\
     &    &\textstyle \Rightarrow \bigvee _{\pi\in \Sym (n)}\bigwedge_{i<n} w_i=v_{\pi (i)}\big)\big]\nonumber.
      \end{eqnarray}
      };
or equivalently, saying that for each  point $v$ and each $\r$-equivalence class $C$ the set $(\cdot, v]\cap C$ is empty or linearly ordered;
\item $\mu$ is a sentence (see footnote \ref{F001}) saying that a supremum exists for each $n$-tuple  of elements of $\{0\}\cup D_{\f _1,\cdot}$
such that different non-zero entries belong to different $\r$-equivalence classes;
\item $\l_n$ is a sentence (see footnote \ref{F001}) saying that each point is a supremum of an $n$-tuple  of elements of $\{0\}\cup D_{\f _1,\cdot}$
such that different non-zero entries belong to different  $\r$-equivalence classes
and that such $n$-tuple is unique up to a permutation $\pi \in \Sym (n)$.
\end{itemize}
\end{cla}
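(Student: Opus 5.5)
We verify the three sentences separately in $\X=\prod_{i<n}\X_i$, using Fact \ref{T531} and Lemma \ref{T550}. Throughout, $D_{\f_1,\X}=X^{(1)}$ and the $\r$-classes are exactly the sets $A_i$, $i<n$ (Fact \ref{T531}(a),(b)). So each of $\nu,\mu,\l_n$ can be read as a statement about the coordinate sets $\{\bar 0\}\cup A_i$.

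For $\nu$, let $\bar v_0,\bar v_1\in A_i$ and $\bar v_0,\bar v_1\leq\bar v$. Then both lie in $(\cdot,\bar v]\cap A_i$. By Lemma \ref{T550}(a) this set is linearly ordered, since $\bar v\in X^{(m)}$ for some $m\geq 1$; note $\bar v\neq\bar 0$ because $\bar v_0\leq\bar v$ and $\bar v_0>\bar 0$. Hence $\bar v_0,\bar v_1$ are comparable.

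For $\mu$, take $\bar v_0,\dots,\bar v_{n-1}$ with $\bar v_i\in\{\bar 0\}\cup A_{k_i}$, where the non-zero entries lie in pairwise different classes. Drop the zero entries; they do not affect the supremum. If all entries are zero, the supremum is $\bar 0$. Otherwise the remaining $m\in[1,n]$ elements of $X^{(1)}$ lie in different $A_k$, so they have a supremum by Fact \ref{T531}(d).

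For $\l_n$, fix $\bar x\in X$. If $\bar x=\bar 0$, the witness is the tuple $\la\bar 0,\dots,\bar 0\ra$. If $\bar x\neq\bar 0$, Lemma \ref{T550}(c) gives the tuple $\la\bar x^i:i<n\ra$. It satisfies the hypotheses: entries lie in $\{\bar 0\}\cup A_i$, non-zero entries in different classes, and $\a_n$ holds. For uniqueness up to a permutation, let $\bar w$ be another admissible tuple with $\a_n(\bar w,\bar x)$. Its non-zero entries lie in pairwise distinct classes $A_{k}$, so there are at most $n$ of them. Choose $\pi\in\Sym(n)$ sending each non-zero $w_j$'s class index to its slot, and fill the remaining slots with the zero entries. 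This gives $\la\bar y^i:i<n\ra\in\prod_{i<n}(\{\bar 0\}\cup A_i)$ with $\bar y^i=\bar w_{\pi^{-1}(i)}$ and $\bigvee\bar y^i=\bar x$. Lemma \ref{T550}(d) then gives $\bar y^i=\bar x^i$, so $\bar w$ is a permutation of $\la\bar x^i\ra$.

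The same argument applies when the witness $\bar v$ in $\l_n$ is any admissible tuple, since both are compared to the canonical one. For $\bar x=\bar 0$, an admissible $\bar w$ with supremum $\bar 0$ must consist entirely of $\bar 0$'s, because each $w_i\leq\bar 0$.

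The main obstacle is this uniqueness step. It needs the reindexing by $\pi$ so that Lemma \ref{T550}(d) applies, together with careful treatment of zero entries and of the case $\bar x=\bar 0$. Everything else is direct.
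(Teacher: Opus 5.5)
Your proposal is correct and follows the paper's proof essentially step for step. You use Lemma \ref{T550}(a) for $\nu$, Fact \ref{T531}(d) after discarding the zero entries for $\mu$, and for $\l_n$ the canonical tuple $\la \bar x^i:i<n\ra$ together with a reindexing permutation and Lemma \ref{T550}(d). You skip the paper's intermediate check that the class indices of the non-zero $w_j$ exhaust $\{i<n : x_i>0_i\}$; it is not needed, since any injection of the non-zero slots into $n$ extends to a permutation. One small wording fix: you want $\pi(j)=k$ whenever $w_j\in A_k$, so that $\bar y^{k}=\bar w_{\pi^{-1}(k)}=\bar w_j$ lands in $\{\bar 0\}\cup A_k$.
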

\dok
Recall that by Fact \ref{T531}(a) we have $X^{(1)}=D_{\f _1,\X}$,
by Fact \ref{T531}(c) $\X^{(1)}=\bcd _{i<n}\A_i$, where
\begin{equation}\label{EQ939}
A_i:=\{0_0\}\times \dots \times \{0_{i-1}\}\times X _i ^+ \times \{0_{i+1}\}\times \dots \times \{0_{n-1}\},
\end{equation}
and by Fact \ref{T531}(b) $D _{\r , \X }$ is the equivalence relation on $X^{(1)}$ corresponding to the partition $\{ A_i:i<n\}$.

$\X \models \nu$. By (\ref{EQ945}) we have $\X \models \nu$ iff for each $\bar x\in X$ and each $i<n$ we have: $(\cdot,\bar x]\cap A_i$ is a linear order or $\emptyset$.
By  Lemma \ref{T550}(a), this is true for $\bar x>\bar 0$
and if $\bar x=\bar 0$, then $(\cdot,\bar x]\cap A_i=\emptyset$;
thus, $\X \models \nu$.

$\X \models \mu$.
By (\ref{EQ946}) we have $\X\models \mu$ iff $\bigvee_{i<n}\bar x^i$ exists for each $\la \bar x^0,\dots ,\bar x ^{n-1}\ra\in \prod_{i<n}(\{\bar 0_\X\} \cup A_i)$.
If $K:=\{ i<n:\bar x^i>\bar 0\}$, then by Fact \ref{T531}(d) $\bigvee_{i\in K}\bar x^i$ exists
and, clearly, $\bigvee_{i<n}\bar x^i=\bigvee_{i\in K}\bar x^i$.

$\X \models \l_n$. By (\ref{EQ949}), $\X\models \l _n$ iff for each $\bar x\in X$ there are $\bar x^i\in \{\bar 0\}\cup X^{(1)}$, for $i<n$, such that:
\begin{itemize}\itemsep -1mm
\item[\sc (i)]   If $i\neq j$, and $\bar x^i, \bar x^j>\bar 0$, then $\bar x^i$ and $\bar x^j$ belong to different $\r$-equivalence classes, $A_i$, $i<n$,
\item[\sc (ii)]  $\bar x=\bigvee _{i<n}\bar x^i$,
\item[\sc (iii)] If $\bar y^i\in \{\bar 0\}\cup X^{(1)}$, for $i<n$, and \\[-7mm]
                 \begin{itemize}\itemsep -1mm
                 \item[(i)] If $i\neq i'$ and $\bar y^i,\bar y^{i'}>\bar 0$, then $\bar y^i \in A_{j_i}$ and $\bar y^{i'}\in A_{j_{i'}}$, for different $j_i,j_{i'}<n$,
                 \item[(ii)] $\bar x=\bigvee _{i<n}\bar y^i$,\\[-7mm]
                 \end{itemize}
                 then there is $\pi\in \Sym (n)$ such that $\bar y^i= \bar x^{\pi (i)}$, for $i<n$.
\end{itemize}
So, if $\bar x=\la x_0, \dots ,x_{n-1}\ra\in X=\prod _{i<n}X_i$,
then by Lemma \ref{T550}(a)
\begin{equation}\label{EQ941}
\bar x^i:=\la 0_0,\dots ,0_{i-1},x_i,0_{i+1},\dots, 0_{n-1}\ra \in \{\bar 0\}\cup A_i,\quad\mbox{ for } i<n,
\end{equation}
and by Lemma \ref{T550}(c) $\bar x=\bigvee _{i<n}\bar x^i$; thus {\sc (i)} and {\sc (ii)}  are true.
Clearly we have
\begin{equation}\label{EQ937}
J:=\{ i<n :\bar x^i>\bar 0 \}=\{ i<n :x_i>0_i \}.
\end{equation}
In order to prove {\sc (iii)}, we assume that $\bar y^i\in \{0\}\cup X^{(1)}$, for $i<n$, and that (i) and (ii) are true.

If $\bar x=\bar 0$,
then, since $\bar x=\bigvee _{i<n}\bar x^i=\bigvee _{i<n}\bar y^i$
we have $\bar x^i=\bar y^i =\bar 0$, for all $i<n$, and {\sc (iii)} is true.

Otherwise we have $J\neq \emptyset$ and
\begin{equation}\label{EQ940}
K:= \{ i<n: \bar y^i>\bar 0\}\neq \emptyset.
\end{equation}
For $i\in K$ we have $\bar y^i\in X^{(1)}=\bcd _{j<n}A_j$;
thus, there is a unique $j_i$ such that $\bar y^i\in A_{j_i}$.
So, by (\ref{EQ939}),
\begin{equation}\label{EQ938}
\bar y^i=\la  0_0, \dots, 0_{j_i-1}, y^i_{j_i},0_{j_i+1}, \dots ,0_{n-1} \ra\in A_{j_i}, \quad \mbox{ for } i\in K,
\end{equation}
and we prove that $J=\{ j_i:i\in K\}$.
If $i\in K$, then by (\ref{EQ938}) we have $y^i_{j_i}>0_{j_i}$,
and, since by (ii) $\bar x\geq _\X \bar y^i$, we have $x_{j_i}>0_{j_i}$,
which by (\ref{EQ937}) gives $j_i\in J$.
On the other hand, if $j\in J$, then by (\ref{EQ937}) $x_j>0_j$
and, by (\ref{EQ938}) and Lemma \ref{T550}, $j=j_i$, for some $i\in K$.

Thus $J=\{ j_i:i\in K\}$,
by (i) for different $i,i'\in K$ we have $j_i \neq j_{i'}$,
and, hence, the mapping $p:K\rightarrow J$ defined by $p(i)=j_i$, for $i\in K$, is a bijection.
Let $\pi \in \Sym (n)$ be a permutation which extends $p$
and for $i<n$ let us define $\bar z^{\pi (i)}:=\bar y ^{i}$.
Then for $i\in K$  by (\ref{EQ938}) we have $\bar z^{\pi (i)} \in A_{j_i}=A_{p(i)}=A_{\pi(i)}\subset \{ \bar 0\}\cup A_{\pi(i)}$,
while for $i\in n\setminus K$ by (\ref{EQ940}) we have $\bar z^{\pi (i)}=\bar 0 \in \{ \bar 0\}\cup A_{\pi(i)}$ again.
Thus, $\bar z^{\pi (i)}\in \{ \bar 0\}\cup A_{\pi(i)}$, for all $i<n$,
and, since $\pi \in \Sym (n)$, $\bar z^i\in \{ \bar 0\}\cup A_i$, for all $i<n$.
Since $\{\bar z^i:i<n \}=\{\bar y^i:i<n \}$,
by (ii) we have $\bar x=\bigvee _{i<n}\bar z^i$.
Since by {\sc (ii)} $\bar x=\bigvee _{i<n}\bar x^i$ too and $\bar x^i,\bar z^i\in \{ \bar 0\}\cup A_i$, for all $i<n$,
by Lemma \ref{T550}(d) for each $i<n$ we have $\bar x^i=\bar z^i$,
and, hence, $\bar y ^{i}=\bar z^{\pi (i)}=\bar x^{\pi (i)}$.
So {\sc (iii)} is true indeed and $\X \models \l _n$.
\kdok
Let
\begin{equation}\label{EQ959}
\t_{\CC _n} :=\f ^{\rm po}_{0} \land \ve_n \land \nu \land \mu \land \l _n, \;\;\mbox{ for }n\geq 2.
\end{equation}
\begin{te}\label{T906}
If $n\geq 2$ and $\X \in \CC _n$, then $\X\models \t_{\CC _n}$.
\end{te}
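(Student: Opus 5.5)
The plan is to reduce the statement to the case of a genuine product and then invoke Claims \ref{T916} and \ref{T917}. By the definition of $\CC_n$, if $\X\in\CC_n$ then there is an isomorphism $f:\X\rightarrow\prod_{i<n}\X_i$ with $\X_i\in\CC^{\rm rt}_{>1}$ for $i<n$. Satisfaction of first-order sentences is preserved under isomorphism. So it suffices to prove $\prod_{i<n}\X_i\models\t_{\CC_n}$, where, by (\ref{EQ959}), $\t_{\CC_n}=\f^{\rm po}_0\land\ve_n\land\nu\land\mu\land\l_n$.

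For the genuine product $\X=\prod_{i<n}\X_i$ we verify the conjuncts one by one.

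\begin{itemize}
\item Claim \ref{T916} gives $\X\models\f^{\rm po}_0\land\ve_n$. Here $\f^{\rm po}_0$ holds because a product of partial orders with least elements is a partial order with least element $\bar 0$. The sentence $\ve_n$ holds because, by Fact \ref{T531}(a),(b), $D_{\r,\X}$ is an equivalence relation on $D_{\f_1,\X}=X^{(1)}$ whose classes are exactly $A_0,\dots,A_{n-1}$. Each $A_i$ is nonempty since $|X_i|>1$, so there are exactly $n$ classes.
\item Claim \ref{T917} gives $\X\models\nu\land\mu\land\l_n$. The underlying facts are Lemma \ref{T550}(a) for $\nu$, Fact \ref{T531}(d) for $\mu$, and Lemma \ref{T550}(c),(d) for the existence and uniqueness up to permutation in $\l_n$.
\end{itemize}

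Taking the conjunction yields $\X\models\t_{\CC_n}$.

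There is no real obstacle here, because all the combinatorial work has already been done in Lemma \ref{T550} and Claims \ref{T916}--\ref{T917}. The only point needing care is that these claims are stated for literal products, while $\CC_n$ is the isomorphism closure. That gap is bridged by the isomorphism-invariance of $\models$ for sentences. Equivalently, one could note that $\t_{\CC_n}$ is a sentence with no parameters, so the definable sets $D_{\f_1,\cdot}$ and $D_{\r,\cdot}$ are carried onto each other by $f$.
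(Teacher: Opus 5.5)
Your proposal is correct and follows essentially the same route as the paper: you reduce to a literal product $\prod_{i<n}\X_i$ by isomorphism invariance of sentences, then obtain $\f^{\rm po}_0\land\ve_n$ from Claim \ref{T916} and $\nu\land\mu\land\l_n$ from Claim \ref{T917}. Your remark that each $A_i$ is nonempty because $|X_i|>1$, which is what gives exactly $n$ classes, is a correct and helpful detail.
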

\dok
Let $\X \cong \prod _{i<n}\X_i$,
where $\X _i\in \CC ^{\rm rt}_{>1}$, for $i<n$.
Since $\X\models \t_{\CC _n}$ iff $\prod _{i<n}\X_i\models \t_{\CC _n}$ w.l.o.g.\ we assume that $\X =\prod _{i<n}\X_i$.
By Claim \ref{T916} we have $\X\models\f ^{\rm po}_{0} \land \ve_n$
and, by Claim \ref{T916}, $\X\models \nu \land \mu \land \l _n$.
\hfill $\Box$
\subsection{Proof of the implication ``$\Leftarrow$" in (\ref{EQ958})}
\begin{te}\label{T908}
If $\X$ is an $L_b$-structure and $\X\models \t_{\CC _n}$, then $\X \in \CC _n$.
\end{te}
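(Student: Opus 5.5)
Assume $\X\models \t_{\CC _n}$. By $\f ^{\rm po}_{0}$, $\X$ is a partial order with smallest element $0$. By $\ve _n$, $D_{\r,\X}$ is an equivalence relation on $D:=D_{\f _1,\X}$ with exactly $n$ classes, which we enumerate as $A_0,\dots ,A_{n-1}$. For $i<n$ we take the suborder $\T_i$ of $\X$ with domain $T_i:=\{0\}\cup A_i$. The goal is to show that each $\T _i$ is a rooted tree of size $>1$ and that the map $f:\prod _{i<n}\T_i\rightarrow \X$ given by $f(\la t_0,\dots ,t_{n-1}\ra):=\bigvee _{i<n}t_i$ is an isomorphism.

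First, each $\T_i$ is a rooted tree. It has minimum $0$ and is nonempty beyond $0$ because $A_i\neq\emptyset$. For $t\in A_i$ we have $\f _1(t)$, so $(\cdot ,t]$ is a chain, and thus $(\cdot,t]\cap T_i$ is a chain as well.

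Next, $f$ is well defined, since $\mu$ gives the existence of the supremum. It is onto by the existence part of $\l _n$. For injectivity, suppose $\bigvee t_i=\bigvee s_i=v$. The uniqueness part of $\l _n$ gives a $\pi$ with $s_i=t_{\pi(i)}$. Since $s_i,t_i\in T_i$ and the classes are disjoint, every nonzero $s_i$ equals some $t_{\pi(i)}\in A_{\pi(i)}$, which forces $\pi(i)=i$. The zero entries then match by counting: the numbers of nonzero entries agree, and the positions are determined by the classes. Hence $\bar s=\bar t$.

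If $s_i\leq t_i$ for all $i$, then clearly $\bigvee s_i\leq \bigvee t_i$. The key step, and the one I expect to be the main obstacle, is the converse: from $\bigvee s_i\leq \bigvee t_i=:v$ derive $s_i\leq t_i$ for each $i$. Here I would first prove a lemma: for $v=\bigvee t_i$ and $a\in A_i$ with $a\leq v$, we have $a\leq t_i$ (in particular $t_i>0$). This is the analogue of Lemma \ref{T550}(a),(d) and must come purely from the axioms.

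My attempt for the lemma runs as follows. Suppose $a\in A_i$ and $a\leq v$. If $t_i\neq 0$, then $a$ and $t_i$ are $\r$-equivalent with common upper bound $v$, so $\nu$ makes them comparable. If $a\le t_i$ we are done. If $t_i<a$, replace $t_i$ by $a$: the tuple obtained is still admissible, and its supremum $v'$ satisfies $v\le v'$ (since $t_i<a$) and $v'\le v$ (since $a\le v$). So $v'=v$, and this contradicts uniqueness in $\l_n$ unless $a=t_i$. If $t_i=0$, the same replacement again gives a second admissible representation of $v$, with a new nonzero entry in class $A_i$, again contradicting uniqueness up to permutation.

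With the lemma established, take $s_i\neq 0$. Then $s_i\le\bigvee s_j\le v$, and the lemma gives $s_i\le t_i$; if $s_i=0$ the inequality is trivial. Thus $f$ is an order isomorphism and $\X\cong\prod_{i<n}\T_i\in\CC _n$.

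The delicate points are two. The replacement argument must use only the sentences $\nu$, $\mu$, $\l_n$. One must also check that $\r$ restricted to $D$ really separates the $A_i$ as intended; this is guaranteed by $\ve_n$ alone, since we define the $A_i$ as its classes.
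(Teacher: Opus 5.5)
Your proposal is correct and follows essentially the same route as the paper. The paper also takes the suborders on $\{0\}\cup A_i$, with $A_i$ the $\r$-classes, as the factors, uses $\mu$ to show $f(\bar x)=\bigvee_{i<n}x_i$ is well defined, and uses $\l_n$ for existence and uniqueness of the representation in $\prod_{i<n}(\{0\}\cup A_i)$. It proves order reflection by your replacement argument: comparability within a class below a point comes from $\nu$, then the offending coordinate is replaced and uniqueness is invoked. The one step you leave implicit is that, for surjectivity, the tuple given by $\l_n$ must first be permuted so that its nonzero entry in $A_i$ sits in position $i$; this is routine.
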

\dok
Let $\X\models \t_{\CC _n}$.
Since $\X\models \f ^{\rm po}_{0}$ the structure $\X$ is a partial order a smallest element;
so we can write $\X =\la X,\leq \ra$ and define $0:=\min \X$.
By (\ref{EQ943}) we have
\begin{equation}\label{EQ944}\textstyle
X^{(1)}:=D_{\f _1,\X}=\{ x\in X\setminus \{ 0\}: \mbox{ the set $(\cdot ,x]$ is linearly ordered by}\leq \}.
\end{equation}
\begin{cla}\label{T909}\rm
Under the assumptions we have
\begin{itemize}\itemsep -0.3mm
\item[\rm (i)] $D_{\r ,\X}=\r$, where $\r$ is the binary relation on the set $X^{(1)}$ defined by
             \begin{equation}\label{EQ930}\textstyle
             x\rr y \Leftrightarrow (\exists z\in X^{(1)} \;\; z\leq x,y)\lor (\neg \exists z\in X   \;\; z\geq x,y);
             \end{equation}
             $\r$ is an equivalence relation on $X^{(1)}$ with $n$ equivalence classes, say $X^{(1)}\!/\,\r=\{ A_0,\dots ,A_{n-1}\}$;
\item[\rm (ii)]  $(\cdot,x]\cap (\{0\}\cup A_i)$ is a linear order, for each $x\in X$ and each $i<n$,
\item[\rm (iii)] If $\bar x\in \prod_{i<n}(\{0\} \cup A_i)$, then $\bigvee_{i<n}x_i$ exists,
\item[\rm (iv)] For each $x\in X$ there is $\bar x\in \prod_{i<n}(\{0\} \cup A_i)$ such that $x=\bigvee_{i<n}x_i$
             and such $n$-tuple is unique; that is,
             \begin{equation}\label{EQ933}\textstyle
             \forall \bar y\in \prod_{i<n}(\{0\} \cup A_i) \;\;
             (x=\bigvee_{i<n}y_i\;\Rightarrow\;\bar y=\bar x).
             \end{equation}
\end{itemize}
\end{cla}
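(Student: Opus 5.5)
This claim is essentially a semantic unpacking of the conjuncts of $\t_{\CC _n}$ in a model $\X$ of it. The plan is to translate each conjunct, using Fact \ref{T704} style reading of the formulas $\f _1$, $\r$, $\a _n$ in the partial order $\X$ with $0=\min\X$. The abbreviations $v=0$ and $v>0$ are interpreted via this definable minimum.

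For (i), I read off (\ref{EQ935}) directly. For $x,y\in X^{(1)}$ we have $\X\models\r[x,y]$ iff either $x,y$ have no common upper bound, or there is $z$ with $\X\models\f _1[z]$ and $z\leq x,y$. This is exactly (\ref{EQ930}), and outside $X^{(1)}$ the relation $D_{\r,\X}$ is empty because of the conjuncts $\f _1(u)\land\f _1(v)$. Thus $D_{\r,\X}=\r$.

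Since $\X\models\ve _n$ (see (\ref{EQ948})), the first conjunct of $\ve _n$ gives reflexivity, symmetry and transitivity of $\r$ on $X^{(1)}$. The second conjunct supplies $v_0,\dots,v_{n-1}\in X^{(1)}$ that are pairwise non-equivalent and such that every element of $X^{(1)}$ is equivalent to one of them. So there are exactly $n$ classes, and we enumerate them as $A_i=[v_i]_\r$.

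For (ii), I use $\X\models\nu$. Given $x$ and $i$, any two elements of $(\cdot,x]\cap A_i$ are $\r$-equivalent members of $X^{(1)}$ with common upper bound $x$, so by (\ref{EQ945}) they are comparable. Adding $0$, which lies below everything, preserves linearity. Hence $(\cdot,x]\cap(\{0\}\cup A_i)$ is a linear order.

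For (iii), let $\bar x\in\prod_{i<n}(\{0\}\cup A_i)$. Each $x_i$ is $0$ or in $D_{\f _1,\X}$, and distinct nonzero entries lie in distinct classes $A_i\neq A_j$, so they are not $\r$-related. Thus the hypothesis of $\mu$ in (\ref{EQ946}) holds, and the supremum exists by the meaning of $\a _n$.

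For (iv), I apply $\l _n$ to $x$. It yields $\bar v$ whose entries are $0$ or in $X^{(1)}$, with pairwise non-equivalent nonzero entries and $x=\bigvee v_i$. The nonzero entries lie in distinct classes, so there is an injection $i\mapsto$ (the class index of $v_i$). Extending it to a permutation $\s\in\Sym(n)$, as in the proof of Claim \ref{T917}, we set $x_{\s(i)}:=v_i$. This gives $\bar x\in\prod_{i<n}(\{0\}\cup A_i)$ with the same supremum, which proves existence.

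For uniqueness, take $\bar y\in\prod(\{0\}\cup A_i)$ with $x=\bigvee y_i$. This $\bar y$ also satisfies the hypothesis of the $\forall\bar w$ part of $\l _n$, and so does $\bar x$. Hence both are permutations of $\bar v$, so $\bar y=\pi\bar x$ for some permutation $\pi$, i.e.\ $y_i=x_{\pi(i)}$.

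This is the one step needing care. Each nonzero $y_i$ lies in $A_i$, and $x_{\pi(i)}$ lies in $\{0\}\cup A_{\pi(i)}$. If $y_i\neq0$ and $\pi(i)\neq i$, then $y_i=x_{\pi(i)}\in A_i\cap A_{\pi(i)}=\emptyset$, which is impossible. So every nonzero $y_i$ equals $x_i$. The multisets of entries agree, so the numbers of zero entries also agree. A nonzero $x_i$ must then equal some $y_k=x_{\pi(k)}$, and the same class argument forces $k=i$. Therefore $\bar y=\bar x$, proving (\ref{EQ933}).
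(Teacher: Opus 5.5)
Your proposal is correct and follows the paper's proof essentially step by step. Parts (i)--(iii) are read off from $\ve_n$, $\nu$ and $\mu$, and (iv) uses $\l_n$ together with a permutation extending the partial injection from the nonzero entries to their class indices. The only cosmetic difference is in uniqueness: you work with the permutation $\pi$ satisfying $y_i=x_{\pi(i)}$, whereas the paper passes to the set equality $\{y_i:i<n\}=\{x_i:i<n\}$; the key observation that class membership fixes the position of each nonzero entry is the same.
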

\dok
(i) By (\ref{EQ935}) and (\ref{EQ944}) for $x,y\in X$ we have $\la x,y\ra \in D_{\r ,\X}$
iff $\X \models \r [x,y]$,
iff $x,y\in X^{(1)}$
and $z\leq x,y$, for some $z\in X^{(1)}$, or the set $\{ x,y\}$ has no upper bound in $\X$.
Thus $\r =D_{\r ,\X}$, where $\r$ is the binary relation on the set $X^{(1)}$ defined by (\ref{EQ930}).
Since $\X\models \ve_n$, for each $x,y,z\in X^{(1)}$ we have
$\la x,x\ra\in \r$,
$\la x,y\ra\in \r $ implies $\la y,x\ra\in \r $, and
$\la x,y\ra,\la y,z\ra\in \r $ implies $\la x,z\ra\in \r $;
thus $\r$ is an equivalence relation on the set $X^{(1)}$.
In addition, there are $x_0,\dots ,x_{n-1}\in X^{(1)}$ belonging to different $\r$-equivalence classes,
say $x_i\in A_i$, for $i<n$, and $\bigcup _{i<n}A_i=X^{(1)}$;
thus, $X^{(1)}\!/\,\r=\{ A_0,\dots ,A_{n-1}\}$ and we have (i). (Here an enumeration of the quotient $X^{(1)}\!/\,\r$ is fixed.)

(ii) Since $\X\models \nu $,
by (\ref{EQ945}), (\ref{EQ944}) and (i) for each $x\in X$ and each $y,z\in X^{(1)}$ such that $y,z\leq x$ and $y\rr z$ we have $y\leq z$ or $z\leq y$.
So, if $x\in X$,  $i<n$ and  $y,z\in (\cdot ,x]\cap A_i$, then $y$ and $z$ are comparable,
which means that $(\cdot ,x]\cap A_i$ is a linear order.
Clearly, $(\cdot ,x]\cap (\{ 0\} \cup A_i)$ is a linear order too.

(iii) Since $\X\models \mu $, by (\ref{EQ946}), (\ref{EQ944}) and (i) we have:
if $x_i \in \{ 0\}\cup X^{(1)}$, for $i<n$, $K:=\{ i<n : x_i>0\}$ and $\la x_i,x_j\ra\not\in \r$, for different $i,j\in K$,
then $\bigvee _{i<n}x_i$ exists.
Now, if $\bar x\in \prod_{i<n}(\{0\} \cup A_i)$, then $x_i \in \{ 0\}\cup A_i \subset \{ 0\}\cup X^{(1)}$, for $i<n$,
and $K:=\{ i<n : x_i>0\}=\{ i<n: x_i \in A_i\}$;
so, for different $i,j\in K$ we have $x_i \in A_i \neq A_j \ni x_j$
and, by (i), $\la x_i,x_j\ra\not\in \r$;
consequently, $\bigvee_{i<n}x_i$ exists and (iii) is true.

(iv) Since $\X\models\l _n$, by (\ref{EQ949}), (\ref{EQ944}) and (i) we have: for each $x\in X$ there are $a_k\in \{0\}\cup X^{(1)}$, for $k<n$, such that\\[-7mm]
\begin{itemize}\itemsep -1mm
\item[1.] $\la a_k,a_{k'}\ra\not\in \r$, for different $k,k'\in K:=\{ k<n : a_k>0\}$,
\item[2.] $x=\bigvee _{k<n}a_k$,
\item[3.] If $y_i\in \{0\}\cup X^{(1)}$, for $i<n$, and\\[-7mm]
         \begin{itemize}\itemsep -0.1mm
         \item[3.1] $\la y_i,y_{i'}\ra\not\in \r$, for different $i,i'\in L:=\{ i<n : y_i>0\}$,
         \item[3.2] $x=\bigvee _{i<n}y_i$,\\[-7mm]
         \end{itemize}
         then there is $\pi\in \Sym (n)$ such that $y_i= a_{\pi (i)}$, for $i<n$.\\[-7mm]
\end{itemize}
Now, if $x\in X$, then there are $a_k\in \{0\}\cup X^{(1)}$, for $k<n$, satisfying 1--3.
By (i) for $k\in K$ there is $i_k<n$ such that $a_k\in A_{i_k}$;
and by 1 for different $k,k'\in K$ we have $i_k \neq i_{k'}$.
Thus the function $I_{\bar a}:K \rightarrow n$ defined by $I_{\bar a}(k)= i_k$, for $k\in K$, is a partial injection from $n$ to $n$.
Let $\pi _{\bar a}\in \Sym (n)$, where $I_{\bar a}\subset \pi_{\bar a}$ and let us define
\begin{equation}\label{EQ947}\textstyle
x_i :=a_{\pi _{\bar a}^{-1}(i)}, \quad \mbox{ for }i<n.
\end{equation}
Then for $i\in I_{\bar a}[K]$ we have $i=I_{\bar a}(k)=\pi_{\bar a}(k)$, for some $k\in K$,
and, hence, $x_i =a_{\pi _{\bar a}^{-1}(i)}=a_{\pi _{\bar a}^{-1}(\pi_{\bar a}(k))}=a_k\in A_{i_k}=A_{I_{\bar a}(k)}=A_i $.
On the other hand, for $i\in n\setminus I_{\bar a}[K]=\pi_{\bar a}[n\setminus K]$
we have $i=\pi_{\bar a}(j)$, for some $j\in n\setminus K$,
and, hence, $x_i =a_{\pi _{\bar a}^{-1}(i)}=a_{\pi _{\bar a}^{-1}(\pi_{\bar a}(j))}=a_j =0$.
Thus $x_i \in A_i $, for $i\in I_{\bar a}[K]$, and $x_i=0$, for $i\in n\setminus I_{\bar a}[K]$,
which gives $\bar x:=\la x_0,\dots ,x_{n-1}\ra \in \prod _{i<n}(\{ 0\}\cup A_i)$.
By (\ref{EQ947}) we have $\{ x_i:i<n\}=\{ a_i:i<n\}$ which by 2 implies $x=\bigvee _{i<n}x_i$.

Let $\bar y:=\la y_0,\dots ,y_{n-1}\ra \in \prod _{i<n}(\{ 0\}\cup A_i)$,
let $x=\bigvee _{i<n}y_i$ and $L:=\{ i<n : y_i>0\}=\{ i<n : y_i\in A_i\}$.
Then $y_i\in \{0\}\cup X^{(1)}$, for $i<n$,
and for different $i,i'\in L$ we have $y_i\in A_i$ and $y_{i'}\in A_{i'}$,
which by (i) gives $\la y_i,y_{i'}\ra\not\in \r$.
So, by 3, there is $\pi\in \Sym (n)$ such that $y_i= a_{\pi (i)}$, for $i<n$,
and, hence, $\{ y_i:i<n\}=\{ a_i:i<n\}$.
Since $\{ x_i:i<n\}=\{ a_i:i<n\}$ too we have $\{ y_i:i<n\}=\{ x_i:i<n\}$.
Now, for $j\in L$ we have $A_j\ni y_j \in \{ x_i:i<n\}$ and, hence $y_j=x_j$.
If $j\in n\setminus L$, then $y_j=0$;
assuming that $x_j \in A_j$ we would have $x_j =y_k$, for some $k<n$,
but then $k=j$ and $y_j \in A_j$, which gives a contradiction.
Thus $x_j=0=y_j$ again, and $\bar y=\bar x$ indeed.
\hfill $\Box$
\begin{cla}\label{T910}
$\T _i:=\{0\} \cup \A_i$, $i<n$, are rooted trees of size $>1$ and $\X \cong \prod_{i<n}\T _i$.
\end{cla}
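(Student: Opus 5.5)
The plan is to show first that each $\T_i$ is a rooted tree of size $>1$, and then that the map
$$\textstyle f:\prod_{i<n}\T_i\rightarrow \X,\qquad f(\bar x):=\bigvee_{i<n}x_i ,$$
is an isomorphism. By Claim \ref{T909}(iii) the supremum exists, so $f$ is well defined. By Claim \ref{T909}(iv) every $x\in X$ equals $f(\bar x)$ for a unique $\bar x$, so $f$ is a bijection.

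\emph{Each $\T_i$ is a rooted tree.} Clearly $0=\min \T_i$, and $|T_i|>1$ because $A_i\neq\emptyset$ by Claim \ref{T909}(i). Take $y\in A_i$. Its down-set $(\cdot,y]$ in $\X$ is linearly ordered by (\ref{EQ944}). Every $w$ with $0<w\leq y$ again has a linearly ordered down-set, so $w\in X^{(1)}$. Since $w\leq w,y$ and $w\in X^{(1)}$, (\ref{EQ930}) gives $w\rr y$, so $w\in A_i$. Hence $(\cdot,y]_{\T_i}=(\cdot,y]_\X$, which is a linear order, and $\T_i$ is a tree.

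\emph{$f$ preserves order.} If $x_i\leq y_i$ for all $i<n$, then $f(\bar y)$ is an upper bound of all the $x_i$, so $f(\bar x)\leq f(\bar y)$.

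\emph{$f$ reflects order; this is the main step.} I first prove an auxiliary fact. For $y=f(\bar y)$ and each $i<n$,
$$y_i=\max\big((\cdot,y]\cap(\{0\}\cup A_i)\big).$$
Suppose $w\in(\cdot,y]\cap(\{0\}\cup A_i)$. By Claim \ref{T909}(ii), $w$ and $y_i$ are comparable, so assume towards a contradiction that $y_i<w$; then $w\in A_i$. Let $\bar z$ be $\bar y$ with its $i$-th entry replaced by $w$. Then $\bar z\in\prod_{j<n}(\{0\}\cup A_j)$, so $s:=\bigvee_{j<n}z_j$ exists by Claim \ref{T909}(iii). Since $y_j\leq s$ for $j\neq i$ and $y_i<w\leq s$, we get $y\leq s$. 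Conversely, every $z_j$ lies below $y$, so $s\leq y$. Thus $y=\bigvee_{j<n}z_j$, and uniqueness (\ref{EQ933}) gives $\bar z=\bar y$, i.e.\ $w=y_i$, a contradiction. So $y_i$ is indeed the maximum.

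With this fact, suppose $f(\bar x)\leq f(\bar y)=y$. Then $x_i\leq f(\bar x)\leq y$ and $x_i\in\{0\}\cup A_i$, so the auxiliary fact gives $x_i\leq y_i$ for every $i<n$. Hence $\bar x\leq\bar y$ in the product.

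Therefore $f$ is an order isomorphism and $\X\cong\prod_{i<n}\T_i$, with each $\T_i$ a rooted tree of size $>1$. This gives $\X\in\CC_n$, which is what Theorem \ref{T908} requires.
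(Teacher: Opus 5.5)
Your proof is correct and follows the paper's argument. You use the same map $f(\bar x)=\bigvee_{i<n}x_i$ with bijectivity from Claim \ref{T909}(iii),(iv), and you get order reflection the same way: replace a coordinate of $\bar y$ by a strictly larger element of $(\cdot,y]\cap(\{0\}\cup A_i)$ and contradict the uniqueness (\ref{EQ933}). The only differences are packaging: you isolate that step as the lemma $y_i=\max((\cdot,y]\cap(\{0\}\cup A_i))$ where the paper argues by contradiction directly, and your check that $(\cdot,y]_{\T_i}=(\cdot,y]_\X$ proves more than needed, since being a subset of a chain already suffices.
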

\dok
Precisely, $\T _i=\la \{0\} \cup A_i, \leq _i\ra$, where $\leq _i\;=\;\leq\upharpoonright (\{0\} \cup A_i)$, for $i<n$.
For $i<n$ by (i) we have $\emptyset \neq A_i\subset X^{(1)}$ and we first show that $\A _i$ is a tree.
For $x\in A_i$ and $y\in (\cdot ,x]_{\A _i}$ we have $y\in (\cdot ,x]_\X$;
thus $(\cdot ,x]_{\A _i}\subset (\cdot ,x]_\X$;
since $x\in X^{(1)}$ the set $(\cdot ,x]_\X$ is linearly ordered
and, hence, $(\cdot ,x]_{\A _i}$ is linearly ordered too.
So $\A _i$ is a tree and, consequently,
$\T _i =\{0\} \cup \A _i$ is a rooted tree, $\min \T _i =0$ and $|T_i|>1$.

By (iii) the mapping $f:\prod_{i<n}(\{0\} \cup A_i) \rightarrow X$ given by
$$\textstyle
f(\la x_0, \dots x_{n-1} \ra)=\bigvee_{i<n}x_i , \quad \mbox{ for all }\bar x=\la x_0, \dots x_{n-1} \ra\in \prod_{i<n}(\{0\} \cup A_i),
$$
is well defined
and we show that $f: \prod_{i<n}\T _i\rightarrow \X$ is an isomorphism.
First, if $\bar x, \bar y\in \prod_{i<n}(\{0\} \cup A_i)$ and $f(\bar x)=f(\bar y)$,
that is, $\bigvee_{i<n}x_i=\bigvee_{i<n}y_i$,
then by (iv) we have $\bar x=\bar y$; thus, $f$ is an injection.
Second, if $x\in X$,
then by (iv) there is $\bar x\in \prod_{i<n}(\{0\} \cup A_i) $ such that $x=\bigvee_{i<n}x_i=f(\bar x)$;
so, $f$ is a surjection.

Third, for $\bar x, \bar y\in \prod_{i<n}(\{0\} \cup A_i)$ we prove that
\begin{equation}\label{EQ934}\textstyle
\bar x \leq \bar y \Leftrightarrow \bigvee _{i<n} x_i \leq \bigvee _{i<n} y_i.
\end{equation}
If $\bar x \leq \bar y$,
then for each $i<n$ we have $x_i \leq  y_i \leq \bigvee _{i<n} y_i$,
which implies that $\bigvee _{i<n} x_i \leq \bigvee _{i<n} y_i$
and the implication ``$\Rightarrow$" in (\ref{EQ934}) is proved.

Conversely, assuming that $x:=\bigvee _{i<n} x_i \leq \bigvee _{i<n} y_i =:y$,
we prove that $\bar x \leq \bar y$.
If $x=y$, then by (iv) we have $\bar x = \bar y$ and we are done.
Otherwise we have $x<y$.
Suppose that $\bar x \not\leq  \bar y$.
Then there is $i_0 <n$ such that $x_{i_0}\not\leq  y_{i_0}$
and, since $x_{i_0}, y_{i_0}\in (\cdot ,y]\cap (\{0\} \cup A_{i_0})$,
by (ii) we have $x_{i_0}> y_{i_0}$.
Now $\bar y<\bar z:=\la y_0,\dots , y_{i_0-1},x_{i_0}, y_{i_0-1},\dots ,y_{n-1} \ra\in \prod_{i<n}(\{0\} \cup A_i)$,
by (iii) $\bigvee_{i<n}z_i$ exists,
since $\bar y<\bar z$ we have $y=\bigvee_{i<n}y_i\leq \bigvee_{i<n}z_i$
and since $z_i\leq y$, for all $i<n$, we have $\bigvee_{i<n}z_i\leq y$.
Thus $\bigvee_{i<n}z_i=\bigvee_{i<n}y_i= y$ and $\bar y\neq \bar z$,
which is false by (iv).
So $\bar x \leq  \bar y$ and (\ref{EQ934}) is true.
\hfill $\Box$
\section{Vaught's conjecture for the partial orders from $\la\la \CC ^{\rm rt}\ra_{\Pi} \ra _{\Sigma}$}\label{S4}
Recall that for $n\geq 2$ the class $\CC _n :=\{ \Y \in \Mod _{L_b}:\exists \X_0,\dots,\X_{n-1}\in \CC ^{\rm rt}_{>1}\; \Y \cong \prod _{i<n}\X_i \}$
is defined by the sentence $\t_{\CC _n}$ given by (\ref{EQ959}).
For $n=1$ let $\CC _1:=\CC ^{\rm rt}_{>1}$
and let $\t _{\CC _1}$ be a first-order $L_b$-sentence defining $\CC _1$.
So, for each $n\in \N$ and for each $L_b$-structure $\X$ we have
\begin{equation}\label{EQ942}
\X \mbox{ is isomorphic to a direct product of $n$ rooted trees of size $>1$}\;\;\Leftrightarrow\;\; \X\models \t_{\CC _n} .
\end{equation}
\begin{te}\label{T907}
If $\Y \in\la\la \CC ^{\rm rt}\ra_{\Pi} \ra _{\Sigma}$
and $\Y \cong \X=\sum _{\BI}\prod _{j<m_i}\X_{i,j}$, where $\BI\in \CC ^{\rm fin}$,
$m_i \in \N$ and $\X_{i,j}\in \CC ^{\rm rt}$, for $j<m_i$ and $i\in I$,
then

(a) The theory $\CT :=\Th(\X)=\Th (\Y)$ satisfies Vaught's conjecture and (\ref{EQ754}) holds;

(b) $\CT$ is $\o$-categorical iff $\Th (\X_{i,j})$ is $\o$-categorical, for each $j<m_i$ and $i\in I$;

(c) If all the factors $\X_{i,j}$ satisfy VC$^\sharp$, then $I(\CT)\in \{ 1,\c\}$.
\end{te}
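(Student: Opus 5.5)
We reduce everything to Fact \ref{T712} applied to the FLD$_0$-theory $\CT$. First some normalizations. By Fact \ref{T043}(a),(b) and the remark after the definition of $\CC_n$, we may delete from each product $\prod_{j<m_i}\X_{i,j}$ all factors of size $1$ without changing its isomorphism type. Doing so does not affect (b) or (c): a one-element factor has an $\o$-categorical theory with $I=1$, which is neutral both for the equivalence in (b) and for the hypothesis in (c). So for each $i\in I$ either $\X_i:=\prod_{j<m_i}\X_{i,j}$ is a one-element poset, or $\X_i\in\CC_{m_i'}$, where $m_i'\ge 1$ is the number of factors of size $>1$. Each $\X_i$ has a smallest element $\bar 0$. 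Using Fact \ref{T200}(a), replace the $\X_i$ by disjoint copies. Then $\X=\sum_\BI\X_i\in\CD(\CT)$, so $\CT$ is an FLD$_0$-theory. Since $\Y\cong\X$, we have $\Th(\Y)=\Th(\X)$.

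(a) It suffices to show that $\CT$ is actually Vaught's in the sense of (\ref{EQ741}), witnessed by the decomposition $\sum_\BI\X_i$. For $i$ with $|X_i|>1$ take $\t_i:=\t_{\CC_{m_i'}}\in\Th(\X_i)$, which exists by (\ref{EQ942}), i.e.\ by Theorem \ref{T915} (and by first-order definability of $\CC^{\rm rt}_{>1}$ when $m_i'=1$). For one-element $\X_i$ take $\t_i$ to be the sentence ``there is exactly one element''. Now let $\Z\models\t_i$. By (\ref{EQ942}), $\Z\cong\prod_{k<m_i'}\Z_k$ with $\Z_k$ rooted trees. Then Fact \ref{T544}(b), applied to $\Th(\Z)$, gives $I(\Z)\in\{1,\o,\c\}\cup[3,\o)$, so in particular $I(\Z)\le\o$ or $I(\Z)=\c$. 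The one-element case is trivial. Hence (\ref{EQ741}) holds, and Fact \ref{T712} yields VC for $\CT$ together with formula (\ref{EQ754}).

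(b) and (c) are read off from (\ref{EQ754}) combined with Fact \ref{T544}(b).

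For (b), suppose first that every $\Th(\X_{i,j})$ is $\o$-categorical. Then $I(\X_{i,j})=1$ for all indices, so by Fact \ref{T544}(b) $I(\X_i)=\prod_j I(\X_{i,j})=1$ for all $i$. Hence $\prod_{i\in I}I(\X_i)=1$, and (\ref{EQ754}) gives $I(\CT)=1$.

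Conversely, suppose $I(\CT)=1$. We must show that every $I(\X_i)=1$ for the given decomposition. The obstacle is that (\ref{EQ754}) is phrased with ``$\exists$ a decomposition'', and the three cases are mutually exclusive only implicitly. We argue directly instead. Suppose some $I(\X_{i_0})>1$. Then $\Th(\X_{i_0})$ has two nonisomorphic countable models $\Z,\Z'$ (or $\X_{i_0}$ is finite, in which case $I=1$ and there is nothing to prove). Form $\sum_\BI\Y_i$ with $\Y_i=\X_i$ (countable elementary substructures where needed) for $i\ne i_0$, and $\Y_{i_0}=\Z$, respectively $\Z'$. By Fact \ref{T200}(b) both sums are countable models of $\CT$. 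We need them to be nonisomorphic. By Fact \ref{T700}(a),(b), any isomorphism maps the definable pieces $D_{\f_i(\bar r,v)}$ onto the corresponding pieces for some tuple $\bar r'$. The difficulty is that this might permute the summands via an automorphism of $\BI$ and so mix $\Z$ with some other summand.

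The cleanest route is to rely on the proof in \cite{KFLD} underlying Fact \ref{T712}: it shows $I(\CT)\ge\prod_i I(\X_i)$ up to this finite-symmetry ambiguity, which is exactly how the case ``$1$'' is characterized. Alternatively, we count: the number of countable models of $\CT$ is at least the number of isomorphism classes of tuples $\la\Y_i\rangle$ modulo the finite group $\mathrm{Aut}(\BI)$. Hence $I(\CT)=1$ forces each $I(\X_i)=1$. Fact \ref{T544}(b) then gives $\prod_jI(\X_{i,j})=1$, because if some factor had $I\ge 2$ the product $\k$ would be $\ge 2$ and $I(\X_i)$ would be either $\k$ or lie in $[3,\o)$. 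So every $\Th(\X_{i,j})$ is $\o$-categorical.

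(c) Assume every $I(\X_{i,j})\in\{1,\c\}$. If all of them equal $1$, then by (b) $I(\CT)=1$. Otherwise some factor has $I(\X_{i,j})=\c$. Then $\k_i:=\prod_j I(\X_{i,j})=\c$, so $I(\X_i)=\c$ by Fact \ref{T544}(b), hence $\prod_{i\in I}I(\X_i)=\c$, and the first clause of (\ref{EQ754}) gives $I(\CT)=\c$.

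The main obstacle is the converse direction in (b). There we must make sure that the decomposition given in the theorem, rather than merely some decomposition, realizes the case $1$ of (\ref{EQ754}). We handle it through the counting modulo $\mathrm{Aut}(\BI)$ described above.
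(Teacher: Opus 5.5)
Your part (a), the implication ``all $\Th(\X_{i,j})$ are $\o$-categorical $\Rightarrow$ $I(\CT)=1$'' in (b), and your part (c) follow the paper's argument: actual Vaughtness via the sentences $\t_{\CC_n}$ and Fact \ref{T544}(b), then (\ref{EQ754}). You are also right that (\ref{EQ754}) alone does not give the converse in (b), since $I(\CT)=1$ only produces \emph{some} decomposition with $\prod I=1$. Your repair of that step, however, is a genuine gap. The swap-one-summand construction needs the two resulting sums to be non-isomorphic. The principle you fall back on is that an isomorphism of sums can only permute summands via $\mathrm{Aut}(\BI)$, so that $I(\CT)$ is at least the number of summand tuples modulo $\mathrm{Aut}(\BI)$. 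That principle is false, because FLD$_0$-decompositions are not canonical (Example \ref{EX100}). This happens even inside the class of the theorem. Let $\BI$ be the $3$-element chain with summands the rooted trees $1+\o^*$, $\o+\o^*$, $\o$. The tuples $\la 1+\Z+\o^*,\,\o+\o^*,\,\o\ra$ and $\la 1+\o^*,\,\o+\Z+\o^*,\,\o\ra$ are coordinatewise elementarily equivalent to these summands and not coordinatewise isomorphic, and $\mathrm{Aut}(\BI)$ is trivial. Yet both sums are isomorphic to $1+\Z+\Z+\Z$. So neither the counting claim nor the appeal to ``the proof in \cite{KFLD} up to finite symmetry'' shows that $I(\CT)=1$ forces $I(\X_i)=1$ for the \emph{given} decomposition.

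What is missing is a way to push $\o$-categoricity from $\CT$ down to the summands of an arbitrary decomposition. The paper cites the dual of Theorem 3.3 of \cite{KFLD}: $\CT$ is $\o$-categorical iff $\prod_{j\in J}I(\Y_j)=1$ for some, equivalently for every, $\sum_{\BJ}\Y_j\in\CD(\CT)$. You can also prove this directly. By Fact \ref{T700}(a), $X_i=D_{\f_i(\bar r,v),\X}$. If $\CT$ is $\o$-categorical, then so is $\Th(\X,\bar r)$. By relativization (the formula version of Fact \ref{T704}), the map $\p(\tilde v)\mapsto\bigwedge_{k<n}\f_i(\bar r,v_k)\land\p^{\f_i}(\bar r,\tilde v)$ injects the $n$-variable $L_b$-formulas modulo $\Th(\X_i)$ into the finitely many $n$-variable formulas modulo $\Th(\X,\bar r)$. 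Hence $\Th(\X_i)$ has finitely many $n$-types for each $n$, and by Ryll-Nardzewski $I(\X_i)=1$. From there, your use of Fact \ref{T544}(b) to get $I(\X_{i,j})=1$ for all $j$ is correct.
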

\dok
(a) For each $i\in I$ the poset $\X _i:=\prod _{j<m_i}\X_{i,j}$ has a smallest element
and, hence, $\CT$ is an FLD$_0$-theory and $\X=\sum _{\BI}\X _i\in \CD (\CT)$.
We show that the decomposition $\sum _{\BI}\X _i$ witnesses that the theory $\CT$ is actually Vaught's.
For $i\in I$ it is possible that $|X_i|=1$;
then there is a sentence $\t_i\in \Th (\X_i)$ saying that and, clearly, $\t _i$ provides VC (in the sense of (\ref{EQ741})).
Otherwise we have $|X_i|>1$
and, excluding factors $\X_{i,j}$ of size 1,
w.l.o.g.\ we can assume that $m_i\in \N$ is taken such that $|X_{i,j}|>1$, for all $j<m_i$.
So, $\X _i$ is a product of $m_i$ rooted trees of size $>1$
and by (\ref{EQ942}) $ \t _{\CC_{m_i}}\in \Th (\X_i )$.
If $\Z \models \t _{\CC_{m_i}}$,
then, by (\ref{EQ942}) again, $\Z $ is isomorphic to a direct product of $n$ rooted trees of size $>1$,
and by Fact \ref{T544}(b) we have $I(\Z)\leq \o$ or $I(\Z)=\c$;
so the sentence $\t _{\CC_{m_i}}$ provides VC (in the sense of (\ref{EQ741})).
Thus the theory $\CT$ is actually Vaught's and, by Fact \ref{T712}, VC is true for $\CT$.

(b) By the dual of Theorem 3.3 of \cite{KFLD} $\CT$ is $\o$-categorical
iff $\prod _{j\in J} I(\Y _j)=1$, for some $\sum _{\BJ}\Y_j\in \CD (\CT)$,
iff $\prod _{j\in J} I(\Y _j)=1$, for all $\sum _{\BJ}\Y_j\in \CD (\CT)$.
So we can consider only the given decomposition $\sum _{\BI}\X_i$.
Thus $\CT$ is $\o$-categorical iff $I(\X_i)=1$, for all $i\in I$,
iff (by Fact \ref{T544}(b)) $I(\X_{i,j})=1$, for all $j<m_i$ and $i\in I$.

(c) If $I(\X_{i,j})\in \{1,\c\}$, for all $j<m_i$ and $i\in I$,
then by Fact \ref{T544}(b) $I(\X_i)\in \{1,\c\}$, for all $i\in I$,
so $\prod_{i\in I}I(\X_i)\in \{1,\c\}$ and $I(\CT)\in \{ 1,\c\}$ by (\ref{EQ754}).
\kdok
By the previous theorem the cardinal $I(\CT )$ satisfies (\ref{EQ754})
and in the ``otherwise" case we only have  the information that $I(\CT)\in[3,\o]$.
A negative answer to the following question would imply that in the ``otherwise" case we have $I(\CT)=\o$,
which would strengthen Fact \ref{T712} and its consequences.
\begin{que}\label{Q000}
Is there an Ehrenfeucht theory of partial order (in the language $\la \leq\ra$)?
\end{que}
\begin{rem}\label{R901}\rm
If Ehrenfeucht theories of partial order exist,
then in the ``otherwise" case of Theorem \ref{T907} $\CT$ is small, $I(\CT)\in [3,\o]$
and  $3\leq \prod _{i\in I} I(\Y _i)<\c$, for each decomposition $\sum _{\BI}\Y_i\in \CD (\CT)$.
Assuming that $\prod _{i\in I}I(\Y _i)>\o$, for some $\sum _{\BI}\Y_i\in \CD (\CT)$,
by Theorem 3.4 of \cite{KFLD} we would have $I(\CT )\geq \o _1$, which is false. Thus
$$\textstyle
\forall \sum _{\BJ}\Y_j\in \CD (\CT)\;\; \prod _{j\in J} I(\Y _j)\in [3,\o]
$$
and, in particular, $\prod _{i\in I} I(\X _i)\in [3,\o]$.
Let $\X ^{\rm at}\models \CT$ be a countable atomic model. It is possible that
\begin{itemize}
\item[1.]  $\exists \sum _{\BJ}\Y_j\in \CD (\X ^{\rm at})\;\;\prod_{j\in J} I(\Y _j)\in [3,\o)$.
Then by Theorem 3.5 of \cite{KFLD} $I(\CT) \in [3,\o)$; so $\CT$ is an Ehrenfeucht theory.
\item[2.]  $\forall \sum _{\BJ}\Y_j\in \CD (\X ^{\rm at})\;\;\prod_{j\in J} I(\Y _j)=\o$.
Is it true that then $I(\CT)=\o$ (or $I(\CT)<\o$ is possible)?
\end{itemize}
We note that if in Theorem \ref{T907} the factors $\X_{i,j}$ satisfy VC$^\sharp$, then the ``otherwise" case does not appear.
\end{rem}
\begin{ex}\label{EX102}\rm
By \cite{KFMD} (see also \cite{KMon,KAC}) VC$^\sharp$ is true for all theories admitting finite monomorphic decompositions, FMD theories;
i.e.\ the theories of relational structures definable in linear orders colored into finitely many convex colors
by quantifier free formulas.
In particular, by Proposition 5.2 of \cite{Ksharp},
a partial order $\X$ is rooted  FMD tree iff $\X=\sum _{\BK}\X _k$, where $\BK$ is a finite rooted tree,
and if $\M (\BK)$ is the set of maximal elements of $\BK$ and $k_0=\min \BK$, then
$\X _k$ is a chain, for $k\in K\setminus \M (\BK)$; $\X _k$ is a chain or an antichain, for $k\in \M (\BK )$;
and $\min \X _{k_0}$ exists.
Then by Theorem 5.4 of \cite{Ksharp} $I(\X)=1$ iff $I(\X _k)=1$, for all $k\in K$; otherwise, $I (\X )=\c$.
(By Fact \ref{T946}(b), if $I(\X)=1$, then  $\Th(\X)$ is finitely axiomatizable iff $\X_k$ is finite, for each $k\in K$ such that $\X _k$ is an antichain.)

So, by Theorem \ref{T907},
if $\Y\cong\sum _{\BI }\prod _{j<m_i}\X _{i,j}$,
where $\BI\in \CC ^{\rm fin}$ and all the factors $\X_{i,j}=\sum _{\BK_{i,j}}\X _{i,j,\,k}$ are rooted FMD trees,
then $I(\Y)=1$ iff $I(\X _{i,j,k})=1$, for all indices $i$, $j$ and $k$; and, otherwise, $I (\Y )=\c$.
We note that if, in addition, the summands $\X _{i,j,\,k}$ are finitely axiomatizable linear orders with a smallest element,
then by Theorem \ref{T940} the theory $\Th (\Y)$ is finitely axiomatizable and we have a simple characterization of its models:
$\Y '\equiv \Y$ iff $\Y ' \cong\sum _{\BI }\prod _{j<m_i}\sum _{\BK_{i,j}}\X _{i,j,\,k}'$, where $\X _{i,j,\,k}'\equiv \X _{i,j,\,k}$, for all indices $i$, $j$ and $k$.
\end{ex}
\section{First-order definable subclasses of $\CC _n$}\label{S5}
By Theorem \ref{T915} the class $\CC _n$ consisting of partial orders $\Y$ isomorphic to a product $\prod _{i<n}\X _i$ of $n$ rooted trees of size $>1$ is first-order definable.
Here we extend that result showing that, roughly, each subclass $\CC$ of $\CC _n$
isolated by a set of first-order properties of the factors $\X_i$ is first-order definable again.
\begin{te}\label{T939}
Whenever $k\leq n \in \N$ and $\t _0, \dots,\t _{k-1}\in \Sent _{L_b}$ there is a $L_b$-sentence defining the class
\begin{equation}\label{EQ979}\textstyle
\CC_n ^{\t _0, \dots,\t _{k-1}}:=\{ \Y\in \CC _n: \exists \X _0,\dots,\X _{n-1}\in \CC ^{\rm rt}_{>1}\;(\Y \cong \prod _{i<n}\X _i \land \forall i<k \;\X _i\models \t _i)\}.
\end{equation}
\end{te}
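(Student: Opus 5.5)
For $n=1$ the class $\CC_1^{\t_0}$ (or $\CC_1$ when $k=0$) is simply $\Mod(\t_{\CC_1}\land \t_0)$, so I assume $n\geq 2$. The idea is that in a model of $\t_{\CC_n}$ the factors are recovered, up to a permutation, as the parametrically definable substructures $\{0\}\cup A_i$, where the $A_i$ are the $\r$-classes. So I would relativize the $\t_i$ to these substructures, using Fact \ref{T704}, and quantify existentially over one parameter from each class.

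\textbf{The sentence.} Let
$$\psi(w,v):= v=0 \lor (\f_1(v)\land \r(w,v)).$$
If $a\in D_{\f_1,\X}$, then $D_{\psi(a,v),\X}=\{0\}\cup A$, where $A$ is the $\r$-class of $a$. I take
$$\textstyle \s:=\t_{\CC_n}\land \exists w_0,\dots ,w_{k-1}\Big(\bigwedge_{i<k}\f_1(w_i)\land \bigwedge_{i<j<k}\neg\r(w_i,w_j)\land \bigwedge_{i<k}\t_i^{\psi}(w_i)\Big).$$

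\textbf{``$\subset$''.} Let $\Y\in \CC_n^{\t_0,\dots,\t_{k-1}}$. Since both the class and $\Mod(\s)$ are $\cong$-closed, I may assume $\Y=\prod_{i<n}\X_i$ with $\X_i\in\CC^{\rm rt}_{>1}$ and $\X_i\models\t_i$ for $i<k$. Theorem \ref{T906} gives $\Y\models\t_{\CC_n}$. By Fact \ref{T531}(a),(b) the $\r$-classes are the sets $A_i$, and by Fact \ref{T531}(c) (or Lemma \ref{T550}(a)) the map $x\mapsto\la 0_0,\dots,x,\dots,0_{n-1}\ra$ is an isomorphism $\X_i\cong\{\bar 0\}\cup\A_i$. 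For $i<k$ I pick $a_i\in A_i$; these lie in distinct $\r$-classes. Then $D_{\psi(a_i,v),\Y}=\{\bar0\}\cup A_i\models\t_i$, so by Fact \ref{T704} we get $\Y\models\t_i^{\psi}[a_i]$, and hence $\Y\models\s$.

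\textbf{``$\supset$''.} Let $\Y\models\s$. By Theorem \ref{T908} and Claims \ref{T909} and \ref{T910}, $\Y\cong\prod_{j<n}\T_j$, where $\T_j=\{0\}\cup\A_j$ and $\{A_j:j<n\}$ are the $\r$-classes. Take witnesses $a_0,\dots,a_{k-1}$. Each $a_i$ lies in some class $A_{p(i)}$, and $p:k\to n$ is injective because the $a_i$ are pairwise non-$\r$-related. Extend $p$ to a permutation $\pi\in\Sym(n)$. By Fact \ref{T704}, $\T_{\pi(i)}=D_{\psi(a_i,v),\Y}\models\t_i$ for $i<k$. By Fact \ref{T043}(a), $\Y\cong\prod_{i<n}\T_{\pi(i)}$, and this representation has $\T_{\pi(i)}\in\CC^{\rm rt}_{>1}$ and $\T_{\pi(i)}\models\t_i$ for $i<k$. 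Hence $\Y\in\CC_n^{\t_0,\dots,\t_{k-1}}$.

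\textbf{Main obstacle.} The delicate point is the ``$\subset$'' direction. The condition in (\ref{EQ979}) refers to some product representation, while $\s$ refers to the intrinsic $\r$-classes. So I must check that for an actual product the definable coordinate trees $\{\bar0\}\cup A_i$ are isomorphic to the given factors $\X_i$, and not merely that some decomposition exists. This is exactly what Fact \ref{T531} provides. I would also take care that $D_{\psi(a_i,v),\Y}\neq\emptyset$, so that Fact \ref{T704} applies; this holds since it contains $0$.
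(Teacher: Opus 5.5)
Your proof is correct, but it takes a more direct route than the paper.

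The paper relativizes to the $\r$-class itself, via $\r(w_i,v)$. So its Lemma \ref{T918} only controls the rootless trees $\X_i^+\cong\A_i$. To get back to sentences $\t_i$ about the factors $\X_i$, it needs Lemma \ref{T938}. That lemma produces $\s_\t$ with $\X\models\t\Leftrightarrow\X^+\models\s_\t$ by a compactness (finite subcover) argument in the Stone space of $\{\t^{\rm rt}_{>1}\}$. The paper also needs a case split on whether $\t^{\rm rt}_{>1}\land\t_i$ is consistent.

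You put $v=0$ into the defining formula $\psi(w,v)$, so you relativize to $\{0\}\cup A_i$, which is (isomorphic to) the factor $\X_i$ itself. This has three effects:
\begin{itemize}
\item Lemma \ref{T938} is no longer needed.
\item The consistency case split disappears: an inconsistent $\t^{\rm rt}_{>1}\land\t_i$ just means your sentence has no models, and your ``$\supset$'' argument covers this uniformly.
\item Your defining sentence is explicit and effectively computable from $\t_0,\dots,\t_{k-1}$, whereas the paper's $\s_{\t_i}$ is obtained non-constructively.
\end{itemize}
In the ``$\supset$'' direction you use the explicit representation $\Y\cong\prod_{j<n}\T_j$ from Claim \ref{T910}, rather than just $\Y\in\CC_n$. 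That is the right choice, since it identifies the factors with the definable sets you relativize to.

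What the paper's detour buys is Lemma \ref{T938} as a standalone result, which it reuses in Proposition \ref{T920}. For Theorem \ref{T939} itself, and for Claim \ref{T921}, where your sentence with $k=n$ works equally well, your argument is the more economical one.
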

A proof of the theorem is given after the following two lemmas.
\begin{lem}\label{T918}
Let $k\leq n\in \N$ and let $\s _i\in \Sent _{L_b}$ and $\Con(\t ^{\rm tree}\land \s _i)$, for $i<k$.
Then there exists a $L_b$-sentence $\p _{\s _0, \dots,\s _{k-1}}$
such that for each $L_b$-structure $\Y$ we have
\begin{equation}\label{EQ964}\textstyle
\Y \models \t_{\CC _n} \land \p _{\s _0, \dots,\s _{k-1}} \Leftrightarrow \exists \X _0,\dots,\X _{n-1}\in \CC ^{\rm rt}_{>1}\;(\Y \cong \prod _{i<n}\X _i \land \forall i<k \;\X _i^+\models \s _i) .
\end{equation}
\end{lem}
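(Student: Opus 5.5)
Inside any model $\Y$ of $\t_{\CC _n}$, the proof of Theorem \ref{T908} yields the coordinate trees $\T _i=\{0\}\cup\A _i$, where the $A_i$ are the $\r$-equivalence classes on $D_{\f _1,\Y}$, and $\Y\cong\prod_{i<n}\T_i$. Each class $A_i$ is parametrically definable: by Fact \ref{T531}(b) and Claim \ref{T909}(i) it equals $D_{\r(w,v),\Y}$ for any parameter $w\in A_i$. Fact \ref{T704} then lets us express ``$\A_i\models\s$'' as the relativization $\s^{\r}[a]$ with $a\in A_i$. The one issue is that the enumeration of the classes is not canonical, so we must say that there is an assignment of the sentences $\s_i$ to distinct classes. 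I would therefore define
$$\textstyle
\p _{\s _0, \dots,\s _{k-1}}:=\exists w_0,\dots ,w_{k-1}\Big(\bigwedge_{i<k}\f_1(w_i)\land\bigwedge_{i<j<k}\neg\r(w_i,w_j)\land\bigwedge_{i<k}\s_i^{\r}(w_i)\Big),
$$
where $\s_i^{\r}(w)$ is the relativization of $\s_i$ to the formula $\r(w,v)$ (with parameter $w$), as in the preliminaries.

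For ``$\Rightarrow$'' in (\ref{EQ964}), let $\Y\models\t_{\CC_n}\land\p$, with witnesses $a_0,\dots,a_{k-1}$. By Claim \ref{T909}(i) they lie in pairwise distinct $\r$-classes. Re-enumerate the $n$ classes so that $a_i\in A_i$ for $i<k$; this is possible because $k\le n$ and the enumeration in Claim \ref{T909} was an arbitrary choice. Then $D_{\r(a_i,v),\Y}=A_i$, so Fact \ref{T704} gives $\A_i\models\s_i$. By Claim \ref{T910}, $\Y\cong\prod_{i<n}\T_i$ with $\T_i\in\CC^{\rm rt}_{>1}$, and $\T_i^+=\A_i$. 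Taking $\X_i:=\T_i$ gives the right-hand side.

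For ``$\Leftarrow$'', let $f:\Y\to\prod_{i<n}\X_i$ be an isomorphism with $\X_i^+\models\s_i$ for $i<k$. Sentences are preserved under isomorphism, so we may assume $\Y=\prod_{i<n}\X_i$. Theorem \ref{T906} gives $\Y\models\t_{\CC_n}$. By Fact \ref{T531}(b),(c), the $\r$-classes in $\Y$ are exactly the sets $A_i$ from (\ref{EQ591}), and $\A_i\cong\X_i^+$. Pick $a_i\in A_i$ for $i<k$; these exist because $|X_i|>1$. Then the $a_i$ satisfy $\f_1$ and are pairwise $\r$-inequivalent, and Fact \ref{T704} applied to $D_{\r(a_i,v),\Y}=A_i$ gives $\Y\models\s_i^{\r}[a_i]$. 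Hence $\Y\models\p$.

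The main obstacle is bookkeeping rather than a real difficulty. One must check that $D_{\r(a,v),\Y}$ is precisely the $\r$-class of $a$ (this needs reflexivity and $\f_1(a)$, both guaranteed by $\ve_n$), so that Fact \ref{T704} applies to a nonempty set. One must also handle the freedom to permute factors, which Fact \ref{T043}(a) and the arbitrary enumeration in Claim \ref{T909}(i) take care of. The hypothesis $\Con(\t^{\rm tree}\land\s_i)$ does not seem to be needed for this equivalence; it presumably serves the later proof of Theorem \ref{T939}, where one passes from properties of $\X_i$ to properties of $\X_i^+$.
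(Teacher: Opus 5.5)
Your proposal is correct and follows the paper's proof: you use the same sentence $\p_{\s_0,\dots,\s_{k-1}}$ (correctly quantifying only $w_0,\dots,w_{k-1}$), the same application of Fact~\ref{T704} to the $\r$-classes $D_{\r(a,v),\cdot}$, and the same argument for ``$\Leftarrow$''. The only difference is in ``$\Rightarrow$'': the paper first uses Theorem~\ref{T915} to pass to a concrete product $\prod_{i<n}\X_i$ and then permutes the factors via Fact~\ref{T043}(a), whereas you stay inside $\Y$ with the coordinate trees of Claims~\ref{T909}--\ref{T910} and re-enumerate the $\r$-classes; both routes work, and you are right that the consistency hypothesis is never used.
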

\dok
We prove that (\ref{EQ964}) is true for the sentence
\begin{equation}\label{EQ963}\textstyle
\p _{\s _0, \dots,\s _{k-1}}:=\exists w_0,\dots , w_k \;(\bigwedge _{i<k}\f_1 (w_i)\land \bigwedge _{i<j<k}\neg \r (w_i,w_j) \land \bigwedge _{i<k} \s _i ^{\r (w_i,v)}(w_i)) .
\end{equation}
Let $\Y \models \t_{\CC _n} \land \p _{\s _0, \dots,\s _{k-1}}$.
Then by Theorem \ref{T915} we have $\Y \in \CC _n$
and, hence, $\Y \cong \X:=\prod _{i<n}\X _i$, where $\X _i\in \CC ^{\rm rt}_{>1}$, for $i<n$.
Since $\Y\models \p _{\s _0, \dots,\s _{k-1}}$ we have $\X \models \p _{\s _0, \dots,\s _{k-1}}$,
which means that there are $a_0,\dots,a_k \in X$ such that

(i) $\X \models \f _1[a_j]$, for $j<k$,

(ii) $\X \models \neg \r [a_j,a_{j'}]$, for $j<j'<k$,

(iii) $\X \models \s _j ^{\r (w_j,v)}[a_j]$, for $j<k$.

\noindent
By (i), Fact \ref{T531}(a) and (\ref{EQ591}) we have  $a_0,\dots,a_k \in X^{(1)}=\bigcup _{i<n}A_i$;
so, for each $j<k$ we have $a_j\in A_{i_j}$, for some $i_j<n$,
and by Fact \ref{T531}(b) $A_{i_j}=\{ x\in X :\X\models \r[a_j,x]\}=D_{\r(a_j,v),\X}$.
By (iii) for each $j<k$ we have $\X \models \s _j ^{\r (w_j,v)}[a_j]$,
which by Fact \ref{T704} gives $D_{\r(a_j,v),\X}\models \s _j$.
Thus $A_{i_j}\models \s _j$,
and, since by Fact \ref{T531}(c) we have $A_{i_j}\cong \X _{i_j}^+$,
we obtain $\X _{i_j}^+\models \s _j$.
By (ii) and Fact \ref{T531}(b) $i_j \neq i_{j'}$, for different $j,j'<k$,
thus $p:k\rightarrow \{ i_j:j<k\}$ is a bijection.
Let $\pi\in \Sym (n)$, where $p\subset \pi$
and let $\Z _j:=\X _{\pi (j)}$, for $j<n$.
Then for $j<k$ we have  $\Z _j:=\X _{\pi (j)}=\X _{p(j)}=\X _{i_j}$
so $\Z _j^+=\X _{i_j}^+ \models \s _j$.
Now by Fact \ref{T043}(a) $\Y \cong \prod _{j<n}\Z _j$
and, since $\Z_0,\dots,\Z _{n-1}\in \CC ^{\rm rt}_{>1}$, the r.h.s. of (\ref{EQ964}) is true.

Conversely, let the r.h.s. of (\ref{EQ964}) hold for $\Y$ and let $\X :=\prod _{i<n}\X _i$.
Then by Theorem \ref{T915} we have $\Y \models \t_{\CC _n}$ and we prove that $\Y \models  \p _{\s _0, \dots,\s _{k-1}}$.
By Fact \ref{T531}(b) we have $X^{(1)}=\bigcup _{i<n}A_i$;
let us take $a_j\in A_j$, for $j<k$.
Then by Fact \ref{T531}(a) for $j<k$ we have $a_j\in X^{(1)}=D_{\f _1,\X}$ and (i) is true.
For $j<j'<k$ by Fact \ref{T531}(b) we have $\X \models \neg \r [a_j,a_{j'}]$ and (ii) is true.
Since $\X _i^+\models \s _j$ and
by Fact \ref{T531}(b) and (c) we have $D_{\r(a_j,v),\X}=A_{i_j}\cong \X _{i_j}^+$
we have $D_{\r(a_j,v),\X}\models \s _j$
and, by Fact \ref{T704}, $\X \models \s _j ^{\r (w_j,v)}[a_j]$; so, (iii) is true.
Thus, $\X \models \p _{\s _0, \dots,\s _{k-1}}$ and, hence, $\Y \models \p _{\s _0, \dots,\s _{k-1}}$ too.
\hfill $\Box$
\begin{lem}\label{T938}
For each $\t\in \Sent _{L_b}$, where $\Con (\t^{\rm rt}_{>1}\land \t)$, there is $\s _\t\in \Sent _{L_b}$ such that
\begin{equation}\label{EQ977}
\forall \X \in \CC ^{\rm rt}_{>1} \;(\X \models \t \;\mbox{ iff }\;\X ^+\models \s _\t ).
\end{equation}
\end{lem}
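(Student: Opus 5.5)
The idea is to translate $\t$ syntactically, by relativization "from the outside in". For a rooted tree $\X\in\CC^{\rm rt}_{>1}$ we have $X=\{0\}\cup X^+$. Here $0=\min\X$ is the only element outside $X^+$, and it lies below everything. So $\X$ is determined up to isomorphism by $\X^+$: it is the poset obtained from $\X^+$ by adjoining a new least element. Note also that $X^+\neq\emptyset$, since $|X|>1$, so $\X^+$ is a legitimate $L_b$-structure. This suggests defining $\s_\t$ as the result of a translation $\p\mapsto\p^*$. The translation rewrites each quantifier over $\X$ as a choice between "the element is the (absent) root" and "the element ranges over $\X^+$".

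Concretely, I would prove by induction on formulas a more general statement. For each $L_b$-formula $\p(v_0,\dots,v_{m-1})$ and each set $Z\subseteq m$ (recording which variables are assigned the root $0$), there is a formula $\p_Z(\langle v_i:i\notin Z\rangle)$. It should satisfy the following: for all $\X\in\CC^{\rm rt}_{>1}$ and all $\bar a$ with $a_i=0$ for $i\in Z$ and $a_i\in X^+$ for $i\notin Z$,
$$
\X\models\p[\bar a]\;\Leftrightarrow\;\X^+\models\p_Z[\langle a_i:i\notin Z\rangle].
$$
The clauses are as follows.

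Atomic formulas: $(v_i\leq v_j)_Z$ is
\begin{itemize}
\item $v_i\leq v_j$ if $i,j\notin Z$;
\item true if $i\in Z$;
\item false if $i\notin Z$ and $j\in Z$.
\end{itemize}
Equality is treated similarly: $v_i=v_j$ becomes true if both indices lie in $Z$, false if exactly one does, and stays $v_i=v_j$ if neither does.

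Connectives commute with the translation.

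Quantifiers: $(\exists u\,\p(\bar v,u))_Z:=\p_{Z\cup\{u\}}\lor\exists u\,\p_Z(\bar v,u)$. Universal quantifiers are handled dually, with a conjunction.

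Each step of the induction is immediate. The atomic clauses hold because $0$ is the least element of $\X$ and $0\notin X^+$. The quantifier clause holds because $X=\{0\}\cup X^+$ is a disjoint union and the order on $X^+$ is the restriction of the order on $\X$.

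Finally, put $\s_\t:=\t_\emptyset$. Then for $\X\in\CC^{\rm rt}_{>1}$ we get $\X\models\t$ iff $\X^+\models\s_\t$, which is (\ref{EQ977}). The hypothesis $\Con(\t^{\rm rt}_{>1}\land\t)$ is not actually needed for this argument. It presumably only ensures that $\s_\t$ is consistent with the tree axioms, which is what the application of Lemma \ref{T918} requires. I would remark on this, and if needed conjoin to $\s_\t$ a sentence stating that the structure is a nonempty tree, so that $\Con(\t^{\rm tree}\land\s_\t)$ holds.

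The only point needing care is the bookkeeping of free variables in the generalized induction hypothesis, i.e.\ tracking the set $Z$. The rest is routine.
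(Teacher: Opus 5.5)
Your proof is correct, and it takes a genuinely different route from the paper.

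\textbf{The paper's route.} The paper argues semantically and non-constructively. It works in the Stone space $\S$ of completions of $\t^{\rm rt}_{>1}$. For each completion $\CT$ it shows that $\{\t^{\rm rt}_{>1}\}\cup\{\p^\f:\p\in\Th(\X^+)\}$ axiomatizes $\CT$; this uses Fact~\ref{T200}(b), viewing $\X$ as the lexicographic sum of $1$ and $\X^+$ over a two-element chain. Hence the clopen sets $B_{[\p^\f]}$ form a neighbourhood base at every point. It then covers the compact set $B_{[\t^{\rm rt}_{>1}\land\t]}$ by such sets and takes a finite subcover. This yields $\s_\t$ as a finite disjunction $\bigvee_j\p_j$ with $[\t^{\rm rt}_{>1}\land\t]=[(\bigvee_j\p_j)^\f]$.

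\textbf{Your route.} You interpret $\X$ directly in $\X^+$ by splitting each quantifier on whether it hits the root. Your clauses check out. The atomic cases use only that $0$ is the least element, antisymmetry, and $0\notin X^+$. The quantifier case uses $X=\{0\}\du X^+$, with $X^+\neq\emptyset$ because $|X|>1$.

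\textbf{What each buys.} Your approach gives an explicit, effective translation $\t\mapsto\s_\t$. It does not depend on Fact~\ref{T200}(b), which is proved elsewhere, or on compactness. It uses neither the tree property nor the hypothesis $\Con(\t^{\rm rt}_{>1}\land\t)$, so it works for any partial order with a least element and at least two points. The paper's argument gives a reusable template instead. Whenever elementary equivalence of a definable part determines the theory of the whole, compactness in the Stone space turns that preservation result into a sentence-by-sentence translation, with no interpretation to build by hand.

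\textbf{A correction to your side remark.} Conjoining a tree axiom to $\s_\t$ does not by itself secure $\Con(\t^{\rm tree}\land\s_\t)$. That consistency follows from the hypothesis together with (\ref{EQ977}): for any model $\X$ of $\t^{\rm rt}_{>1}\land\t$, the tree $\X^+$ satisfies $\s_\t$. This is exactly how the paper argues in the proof of Theorem~\ref{T939}.
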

\dok
We regard the $0$-th Lindenbaum-Tarski algebra of $\{\t^{\rm rt}_{>1}\}$,  $\B:=\Sent _{L_b}/\!\leftrightarrow _{\t^{\rm rt}_{>1}}$,
its Stone dual $\S:=\{ \tilde\CT: \t^{\rm rt}_{>1}\in \CT \mbox{ and $\CT$ is a complete $L_b$-theory}\}$,
where $\tilde\CT:= \{ [\p] :\p \in \CT\}$,
and its second dual $\CB:=\{ B_{[\p]}: [\p]\in \B\}\cong \B$, a clopen base for the topology on $\S$,
where $B_{[\p]}:=\{ \tilde \CT \in \S: [\p]\in \tilde \CT \}$.

Clearly, if $\X\models \t^{\rm rt}_{>1}$, then $\X ^+=D_{\f (v),\X}$, where $\f (v):= \exists u\; (\forall w \;(u\leq w)\land v>u)$.

By the assumption we have $B_{[\t^{\rm rt}_{>1}\land \t]} \neq \emptyset$.
Let $\CT\in B_{[\t^{\rm rt}_{>1}\land \t]}$ and $\X \models \CT$.
We show that $\CT_\X :=\{\t^{\rm rt}_{>1}\}\cup \{ \p ^\f:\p\in \Th (\X ^+)\}$ is an axiomatization of $\CT =\Th (\X)$.
By Fact \ref{T704} for $\p\in \Th (\X ^+)$ we have $\X \models \p ^\f$;
thus $\CT _\X \subset \CT$ and, hence, $\Mod (\CT)\subset \Mod (\CT_\X)$.
Conversely, if $\Y\models \CT _\X$, then $\Y$ is a rooted tree of size $>1$
and for each $\p\in \Th (\X ^+)$  we have $\Y \models \p ^\f$ and, by Fact \ref{T704}, $\Y ^+\models \p$.
Thus $\Y ^+\equiv \X ^+$ and, by Fact \ref{T200}(b), $\Y \equiv \X $, that is $\Y \in \Mod (\CT)$.

So, $\Mod (\CT)= \Mod (\CT _\X)$
and, by Fact \ref{T922}, $\X,\Y\models \CT$ implies that $\Th (\X^+)=\Th (\Y^+)=:\CT ^+$,
thus $\CT_\X=\CT_\Y$ and $\CT' :=\{\t^{\rm rt}_{>1}\}\cup \{ \p ^\f:\p\in \CT ^+\}$ is an axiomatization of $\CT$.
Consequently, $\CB (\CT):=\{B_{[\t^{\rm rt}_{>1}]}\}\cup \{ B_{[\p ^\f]}:\p\in \CT ^+\}$
is a clopen neighborhood base of the point $\CT$ of $\S$.
Thus, since $B_{[\t^{\rm rt}_{>1}\land \t]}$ is a neighborhood of $\CT$,
there is $B_{[\eta _\CT]}\in \CB (\CT)$ such that $\CT \in B_{[\eta_\CT]}\subset B_{[\t^{\rm rt}_{>1}\land \t]}$,
which means that in $\B$ we have $[\eta _\CT]\leq [\t^{\rm rt}_{>1}\land \t]$.

If $\eta_\CT =\t^{\rm rt}_{>1}$, for some $\CT\in B_{[\t^{\rm rt}_{>1}\land \t]}$,
then $[\t^{\rm rt}_{>1}]\leq [\t^{\rm rt}_{>1}\land \t]\leq[\t^{\rm rt}_{>1}]$,
which gives $[\t^{\rm rt}_{>1}\land \t]=[\t^{\rm rt}_{>1}]$.
So, defining $\s _\t := \forall v\; v=v$,
for $\X \in \CC ^{\rm rt}_{>1}$ we have
$\X \models \t $
iff $\X \models \t^{\rm rt}_{>1}\land \t$
iff $\X \models \t^{\rm rt}_{>1}$
which is true;
since $\X ^+\models \s _\t $ is true too, we have (\ref{EQ977}) and the lemma is proved.

The remaining case is when for each $\CT\in B_{[\t^{\rm rt}_{>1}\land \t]}$
there is $\p _{\CT}\in \CT ^+$ such that  $\CT \in B_{[\p _{\CT}^\f]}\subset B_{[\t^{\rm rt}_{>1}\land \t]}$.
Then $B_{[\t^{\rm rt}_{>1}\land \t]}=\bigcup_{\CT\in B_{[\t^{\rm rt}_{>1}\land \t]}} B_{[\p _{\CT}^\f]}$ is an open cover of the compact set $B_{[\t^{\rm rt}_{>1}\land \t]}$
and, hence, there is a finite subcover $B_{[\t^{\rm rt}_{>1}\land \t]}=\bigcup_{j<k} B_{[\p _{\CT_k}^\f]}$,
which, since $\CB \cong \B$, gives $[\t^{\rm rt}_{>1}\land \t]= [\bigvee_{j<k} \p _{\CT_k}^\f] = [(\bigvee_{j<k} \p _{\CT_k})^\f]=[\s _\t ^\f]$,
where $\s_\t:=\bigvee_{j<k} \p _{\CT_k}$;
in other words, $\t^{\rm rt}_{>1}\vdash \t^{\rm rt}_{>1}\land \t \Leftrightarrow \s_\t ^\f$.
Now for $\X \in \CC ^{\rm rt}_{>1}$ we have
$\X \models \t $
iff $\X \models \t^{\rm rt}_{>1}\land \t$
iff $\X \models \s _\t^\f $
iff (by Fact \ref{T704}) $\X ^+ \models \s_\t $,
and (\ref{EQ977}) is true again.
\kdok
\noindent
{\bf Proof of Theorem \ref{T939}.}
Let $k\leq n \in \N$ and $\t _0, \dots,\t _{k-1}\in \Sent _{L_b}$.
If $\Con(\t ^{\rm rt}_{>1}\land \t _i)$ fails for some $i<k$,
then the class $\CC_n ^{\t _0, \dots,\t _{k-1}}=\emptyset$ is definable by a contradiction.

Otherwise for $i<k$ we have $\Con(\t ^{\rm rt}_{>1}\land \t _i)$
and by Lemma \ref{T938} there is $\s _{\t _i}\in \Sent _{L_b}$ such that
\begin{equation}\label{EQ978}
\forall \X \in \CC ^{\rm rt}_{>1} \;(\X \models \t_i \;\mbox{ iff }\;\X ^+\models \s _{\t_i}  ) .
\end{equation}
Since $\Con(\t ^{\rm rt}_{>1}\land \t _i)$ there is $\X \in \CC ^{\rm rt}_{>1}$ such that $\X \models \t_i$
and by (\ref{EQ978}) we have $\X ^+\models \s _{\t_i}$;
so, since $\X ^+\models \t ^{\rm tree}$, we have $\Con (\t ^{\rm tree}\land \s _{\t_i})$.
Applying Lemma \ref{T918} to the formulas $\s _{\t_i}$ (instead of $\s _i$), $i<k$, we obtain the formula (see (\ref{EQ963}))
\begin{equation}\label{EQ980}\textstyle
\p _{\s_{\t _0}, \dots,\s_{\t _{k-1}}}:=\exists w_0,\dots , w_k \;(\bigwedge _{i<k}\f_1 (w_i)\land \bigwedge _{i<j<k}\neg \r (w_i,w_j) \land \bigwedge _{i<k} \s_{\t _i} ^{\r (w_i,v)}(w_i)),
\end{equation}
and by (\ref{EQ964}) for each $\Y\in \Mod_{L_b}$ we have
$\Y \models \t_{\CC _n} \land \p _{\s_{\t _0}, \dots,\s_{\t _{k-1}}}$
iff there are $\X _0,\dots,\X _{n-1}\in \CC ^{\rm rt}_{>1}$ such that $\Y \cong \prod _{i<n}\X _i$
and $\X _i^+\models \s_{\t _i}$, that is, (by \ref{EQ978}) $\X _i\models \t _i$, for $i<k$;
iff $\Y \in \CC_n ^{\t _0, \dots,\t _{k-1}}$.
Thus the sentence $\t_{\CC _n} \land \p _{\s_{\t _0}, \dots,\s_{\t _{k-1}}}$ defines the class $\CC_n ^{\t _0, \dots,\t _{k-1}}$.
\hfill $\Box$
\begin{cor}\label{T919}
For each $n\in \N$ and $\t \in \Sent _{L_b}$ the sentence $\t_{\CC _n}\land \p _{\s_\t , \dots,\s_\t}$ defines the class
$$
\CC_n^\t := \{ \Y \in\Mod_{L_b}:\Y \mbox{ is isomorphic to a direct product of $n$ rooted trees of size $>1$ satisfying $\t$}\}.
$$
In particular, the following class of distributive lattices is first-order definable
$$
\CC_n^{\rm lo} :=  \{ \Y \in\Mod_{L_b}:\Y \mbox{ is isomorphic to a direct product of $n$ linear orders with $0$ of size $>1$}\}
$$
and the same holds if we take linear orders with additional first-order properties; e.g.\ bounded, dense, discrete, etc.
\end{cor}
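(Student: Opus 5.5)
The corollary should follow from Theorem \ref{T939} applied with $k=n$ and $\t_0=\dots=\t_{n-1}=\t$, together with the explicit form of the defining sentence from its proof. I first dispose of the degenerate cases. If $\Con(\t^{\rm rt}_{>1}\land\t)$ fails, then $\CC_n^\t=\emptyset$. In that case I read $\s_\t$ as a contradiction, so the displayed sentence is unsatisfiable and still defines the class. For $n=1$, $\t_{\CC_1}$ defines $\CC^{\rm rt}_{>1}$, and Lemma \ref{T918} and Theorem \ref{T939} go through verbatim with $n=1$. (Fact \ref{T531} with $n=1$ gives $X^{(1)}=X\setminus\{0\}$, and $\r$ is trivially the total relation on it.)

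In the main case, Lemma \ref{T938} gives $\s_\t$ with $\X\models\t$ iff $\X^+\models\s_\t$ for all $\X\in\CC^{\rm rt}_{>1}$. The proof of Theorem \ref{T939} then shows that $\t_{\CC_n}\land\p_{\s_\t,\dots,\s_\t}$ defines $\CC_n^{\t,\dots,\t}$. By (\ref{EQ979}) with $k=n$, this is precisely the class of structures isomorphic to $\prod_{i<n}\X_i$ with every $\X_i\in\CC^{\rm rt}_{>1}$ satisfying $\t$, that is, $\CC_n^\t$. The only point to check is that Lemma \ref{T918} uses pairwise $\r$-inequivalent witnesses $w_i$, so each of the $n$ coordinate trees receives the condition $\s_\t$. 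With $k=n$ all factors are constrained, so no permutation issue arises.

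For the lattice statement, I take $\t:=\t^{\rm lo}$, the sentence saying the order is linear. A linear order with $0$ of size $>1$ is a rooted tree of size $>1$ satisfying $\t^{\rm lo}$, and conversely. Hence $\CC_n^{\rm lo}=\CC_n^{\t^{\rm lo}}$ is first-order definable. Its members are distributive lattices, since products of chains are distributive lattices. For additional properties (bounded, dense, discrete, etc.) I use $\t^{\rm lo}\land\t'$, where $\t'$ expresses the property. The argument is routine once Theorem \ref{T939} is available; the only care needed is the handling of the inconsistent case and of $n=1$.
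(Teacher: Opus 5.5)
Your proposal is correct and follows the paper's route: the corollary is Theorem~\ref{T939} with $k=n$ and $\tau_0=\dots=\tau_{n-1}=\tau$, whose proof yields exactly the sentence $\tau_{\mathcal{C}_n}\land\psi_{\sigma_\tau,\dots,\sigma_\tau}$. The lattice claim then follows by taking $\tau$ to say ``linear order'', conjoined with any further first-order property. Your handling of the inconsistent case is sound: there $\sigma_\tau$ is naturally a contradiction, which is what the empty subcover in the proof of Lemma~\ref{T938} would produce. Your check of the $n=1$ case is also sound, and both points only make explicit what the paper leaves implicit.
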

\begin{ex}\label{EX101}\rm
For $m\in \N$, a tree $\X$ is said to have {\it width $\leq m$} (resp. {\it height $\leq m$}) if it has no antichain (resp. chain) of size $m+1$.
$\X$ is {\it m-branching} iff for each $x\in X$ there are pairwise incomparable $y_0,\dots ,y_{m-1}>x$;
$\X$ is {\it leafless} iff it is 1-branching (has no maximal elements).
Clearly each of these properties is expressible by a first order sentence $\t$
and by Corollary \ref{T919}
for each $n\in \N $ the corresponding class $\CC_n^\t$ of partial orders is first-order definable.
\end{ex}
\section{FLD theories with finitely axiomatizable decompositions}\label{S6}
If $\CT$ is an FLD$_0$-theory of partial order, a decomposition $\sum _{\BI}\X_i \in \CD (\CT)$
will be called a {\it finitely axiomatizable decomposition} iff
the summands $\X _i$, $i<n$, are finitely axiomatizable.
Here we show that if such a decomposition exists, then $\CT$ is finitely axiomatizable, inherits VC and VC$^{\sharp}$ from the summands
and we describe all models of $\CT$.

\begin{te}\label{T932}
If $\CT$ is an FLD$_0$-theory with a finitely axiomatizable decomposition $\sum _{\BI}\X_i \in \CD (\CT)$, then

(a) The theory $\CT$ is finitely axiomatizable;

(b) $\Y\models \CT$ iff $\Y =\sum _{\BI}\Y_i$, where $\Y_i \equiv \X _i$, for all $i<n$;

(c) If the summands $\X_i$, $i<n$, satisfy VC, then $\CT$ satisfies VC and (\ref{EQ754}) holds;

(d) If the summands $\X_i$, $i<n$, satisfy VC$^{\sharp}$, then $\CT$ satisfies VC$^{\sharp}$: $I(\CT)=1$, if $I(\X_i)=1$, for all $i<n$, otherwise we have $I(\CT)=\c$.
\end{te}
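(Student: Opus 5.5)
Let $\BI=\la n,\leq_\BI\ra$ and, for $i<n$, let $\t_i\in\Sent_{L_b}$ axiomatize $\Th(\X_i)$; put $r_i=\min\X_i$. The central step is (a). From it, (b) follows by Fact \ref{T700}(b), and (c), (d) follow from Fact \ref{T712}.

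\emph{Proof of (a).} Take the formulas $\f_i(\bar w,v)$, $i<n$, given by Fact \ref{T700} for the decomposition $\sum_\BI\X_i$. Let $\s$ be the sentence $\exists \bar w\,\chi(\bar w)$, where $\chi(\bar w)$ is the conjunction of the following:
\begin{itemize}
\item the partial order axioms;
\item a sentence saying that the sets $D_{\f_i(\bar w,v)}$, $i<n$, are nonempty and pairwise disjoint, and that they cover the universe;
\item for $i\neq j$, a sentence saying that $u\leq v$ holds between an element of $D_{\f_i}$ and an element of $D_{\f_j}$ exactly when $i<_\BI j$ (this is the lexicographic sum clause (\ref{EQ200}));
\item $w_i=\min D_{\f_i(\bar w,v)}$, for $i<n$;
\item the relativizations $\t_i^{\f_i}(\bar w)$, for $i<n$.
\end{itemize}
Plugging in $\bar r$ and using Fact \ref{T704}, we get $\X\models\s$. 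Conversely, suppose $\Y\models\s$, witnessed by $\bar r'$, and set $Y_i:=D_{\f_i(\bar r',v),\Y}$. Then $\Y=\sum_\BI\Y_i$. Fact \ref{T704} gives $\Y_i\models\t_i$, so $\Y_i\equiv\X_i$. Fact \ref{T200}(b) then yields $\Y\equiv\X$. Hence $\s$ axiomatizes $\CT$.

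The point to watch is that the relativized $\t_i$ really pins down $\Th(\Y_i)$. This holds because each $\t_i$ axiomatizes a \emph{complete} theory. Note also that we never need the sets $D_{\f_i}$ in $\Y$ to behave as they do in $\X$: every property we use is stated explicitly inside $\s$.

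\emph{Proof of (b).} For ``$\Leftarrow$'', apply Fact \ref{T200}(b). For ``$\Rightarrow$'', apply Fact \ref{T700}(b) with the sentences $\t_i$: it produces $\Y=\sum_\BI\Y_i$ with $\Y_i\models\t_i$, and therefore $\Y_i\equiv\X_i$.

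\emph{Proof of (c).} The sentences $\t_i\in\Th(\X_i)$ witness that $\CT$ is actually Vaught's, in the sense of (\ref{EQ741}). Indeed, if $\Z\models\t_i$, then $\Th(\Z)=\Th(\X_i)$, which satisfies VC. Fact \ref{T712} now gives VC for $\CT$ together with (\ref{EQ754}).

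\emph{Proof of (d).} Here each $I(\X_i)$ lies in $\{1,\c\}$, so the product $\prod_i I(\X_i)$ is $1$ or $\c$.
\begin{itemize}
\item If the product is $\c$, then (\ref{EQ754}) gives $I(\CT)=\c$.
\item If the product is $1$, then the case ``$\CT$ is large'' might seem to compete in (\ref{EQ754}). It does not: by the dual of Theorem 3.3 of \cite{KFLD}, as used in the proof of Theorem \ref{T907}(b), a product equal to $1$ forces $\CT$ to be $\o$-categorical, so $I(\CT)=1$.
\end{itemize}
The main care needed is in reading the cases of (\ref{EQ754}) correctly, namely that a product equal to $1$ excludes largeness.
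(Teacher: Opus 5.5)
Your proposal is correct and follows the paper's proof essentially step by step. For (a) you use a sentence asserting the existence of $\bar w$ for which the formulas $\f_i(\bar w,v)$ from Fact \ref{T700} split the model into a lexicographic $\BI$-sum whose summands satisfy $\t_i$, then apply Facts \ref{T704} and \ref{T200}(b); (b) follows from Facts \ref{T700}(b) and \ref{T200}(b), and (c), (d) from Fact \ref{T712}. The differences are cosmetic. You state the order between blocks directly via $<_{\BI}$, whereas the paper uses the diagram formula $\p_{\BI}(\bar w)$ together with $u\leq v\Leftrightarrow w_i\leq w_j$. In (d) you explicitly rule out the ``large'' case via the $\omega$-categoricity criterion used in Theorem \ref{T907}(b), a point the paper leaves implicit.
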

\dok
(a) By the assumption, $\BI=\la n,\leq _{\BI}\ra$ is a finite partial order and $\X _i$, $i<n$, are partial orders with a smallest element.
We assume that $X_i$, $i<n$, are pairwise disjoint sets.
Let $r_i:=\min \X _i$, for $i<n$, and $\bar r:=\la r_0,\dots ,r_{n-1}\ra$.
Since $\X =\sum _{\BI}\X_i$ is an FLD$_0$-partial order,
by Fact \ref{T700} there are $L_b$-formulas $\f_i (w_0,\dots ,w_{n-1},v)=\f_i (\bar w,v)$, $i<n$,
such that (a) and (b) are true.
For $i<n$, let $\t _i \in \Th (\X _i)$ be a sentence such that $\Y\models \t _i$ implies $\Y\equiv \X_i$, for each $\Y\in \Mod _{L_b}$.
Let $\p _{\BI} (w_0,\dots,w_{n-1})=\p _{\BI}(\bar w)$ be the $L_b$-formula
$$\textstyle
\p_{\BI}(w_0,\dots,w_{n-1}):= \bigwedge _{\f (\bar w)\in\; \Lit _{L_b} \land \;\BI\;\models \;\f [0,\dots,n-1]} \f (\bar w).
$$
\begin{cla}\label{T933}\rm
If $\Y$ is an $L_b$-structure  and $y_0, \dots ,y_{n-1}\in Y$,
then
\begin{center}
$f:=\{ \la i,y_i\ra :i<n\} : \BI \rightarrow \{ y_0, \dots ,y_{n-1}\}\subseteq \Y$ is an isomorphism iff
$\Y \models \p _{\BI} [\bar y]$.
\end{center}
\end{cla}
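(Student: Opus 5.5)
The plan is to unfold the definition of $\p_{\BI}$ and argue through the literals it contains. Since $L_b=\la\leq\ra$ has a single binary relation symbol and no function or constant symbols, the literals $\f(\bar w)$ in the variables $w_0,\dots,w_{n-1}$ are exactly the formulas $w_i\leq w_j$, $\neg(w_i\leq w_j)$, $w_i=w_j$ and $w_i\neq w_j$, for $i,j<n$. In particular, up to the finitely many choices of $i,j$ there are only finitely many such literals, so the conjunction defining $\p_{\BI}$ is finite and $\p_{\BI}(\bar w)$ is a genuine $L_b$-formula. By construction, $\p_{\BI}$ contains precisely the literals true of $\la 0,\dots,n-1\ra$ in $\BI$.

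Given that, $\Y\models\p_{\BI}[\bar y]$ says the following for all $i,j<n$:
\begin{itemize}
\item $y_i=y_j$ iff $i=j$;
\item $y_i\leq^{\Y} y_j$ iff $i\leq_{\BI} j$.
\end{itemize}
Each literal true in $\BI$ is in the conjunction. For each pair $i,j$, exactly one of $w_i\leq w_j$ and $\neg(w_i\leq w_j)$ holds in $\BI$, and exactly one of $w_i=w_j$ and $w_i\neq w_j$ holds there, so the conjunction fixes both biconditionals completely.

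For ``$\Leftarrow$'', I will use the two conditions as follows. The equality literals give that $y_i\neq y_j$ for $i\neq j$, so $f$ is a bijection from $n$ onto $\{y_0,\dots,y_{n-1}\}$. The $\leq$-literals give $i\leq_{\BI} j\Leftrightarrow f(i)\leq^{\Y} f(j)$. Hence $f$ is an isomorphism onto the substructure $\{y_0,\dots,y_{n-1}\}$ of $\Y$, whose relation is the restriction of $\leq^{\Y}$.

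For ``$\Rightarrow$'', if $f$ is an isomorphism onto that substructure, then atomic formulas and their negations are preserved under $f$. So every literal $\f$ with $\BI\models\f[0,\dots,n-1]$ satisfies $\Y\models\f[y_0,\dots,y_{n-1}]$, and hence $\Y\models\p_{\BI}[\bar y]$.

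There is no real obstacle here. The only points needing care are that the conjunction is finite, and that the equality literals are what force injectivity. This matters because $f$ is written as a set of pairs, and without injectivity it might not even be a bijection.
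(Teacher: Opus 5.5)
Your proof is correct and is essentially the paper's argument. The paper expands by constants to $\BI_n$ and cites the diagram lemma (Chang--Keisler, p.~69), that $f$ is an isomorphism onto its image iff $(\Y,\bar y)\models\Delta_{\BI}$, and then rewrites $\Delta_{\BI}$ with variables as $\p_{\BI}$; you verify that lemma by hand for the literals of $\la\leq\ra$, and you also make explicit that $\p_{\BI}$ is a genuine finite conjunction, which the paper leaves implicit.
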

\dok
If $L_n=L_b\cup \la c_0,\dots,c_{n-1}\ra$, where $c_i$, $i<n$, are new different constants,
$\BI_n :=(\BI, \la 0,\dots,n-1 \ra)$ is the corresponding expansion of $\BI$
and $\Delta _{\BI}:=\{ \f (c_0,\dots,c_{n-1})\in \Lit _{L_n}:\BI_n\models \f (c_0,\dots,c_{n-1}) \}$
is the diagram of $\BI$
(the set of all atomic sentences and negations of atomic sentences of $L_n$ which are true in $\BI_n$),
then (see \cite{CK}, p.\ 69)
$f$ is an isomorphism iff $(\Y ,\la y_0, \dots ,y_{n-1} \ra)\models \Delta _{\BI}$
iff  $(\Y ,\bar y)\models \bigwedge \{ \f (\bar c):\f (\bar c)\in \Lit _{L_n}\land \;\BI_n\models \f (\bar c) \}$
iff  $(\Y ,\bar y)\models \bigwedge \{ \f (\bar c):\f (\bar w)\in \Lit _{L_b}\land \;\BI\models \f [0,\dots,n-1] \}$
iff  $\Y \models \bigwedge \{ \f (\bar w):\f (\bar w)\in \Lit _{L_b}\land \;\BI\models \f [0,\dots,n-1] \}[\bar y]$.
\kdok
\noindent
A sentence $\t _\X$ axiomatizing $\Th (\X)$ will be a conjunction $\p ^{\rm po}\land \p$, where
for an $L_b$-structure $\Y$ the sentence $\p ^{\rm po}$ says that $\Y$ is a partial order
and $\p$ says that
there are $y_0, \dots ,y_{n-1}\in Y$ such that

({\sc i}) $f:=\{ \la i,y_i\ra :i<n\} : \BI \rightarrow \{ y_0, \dots ,y_{n-1}\}\subseteq \Y$ is an isomorphism,

({\sc ii}) $\{D_{\f _i (\bar y,v),\Y} :i<n \}$ is a partition of the domain $Y$ of $\Y$,

({\sc iii}) $y_i\in D_{\f _i (\bar y,v),\Y}$ and $y_i=\min D_{\f _i (\bar y,v),\Y}$, for $i<n$,

({\sc iv}) $D_{\f _i (\bar y,v),\Y}\models \t _i$, that is, $D_{\f _i (\bar y,v),\Y}\equiv \X_i$, for $i<n$,

({\sc v}) $\Y$ is the $\{ y_i :i<n\}$-lexicographic sum $\sum _{\{ y_i :i<n\}}D_{\f _i (\bar y,v),\Y}$;

\noindent
for example, let
\begin{eqnarray}
\p & := &\textstyle \exists w_0, \dots ,w_{n-1} \Big[ \p _{\BI}(\bar w) \land \forall u \bigvee _{i<n} \f _i (\bar w,u)\land
                                         \bigwedge _{\{i,j\}\in [I]^2} \neg\exists u \;( \f _i (\bar w,u)\land \f _j (\bar w,u))\land \nonumber\\
   &    &\textstyle \bigwedge _{i<n}  \Big(\f _i (\bar w,w_i) \land \forall w \;(\f _i (\bar w,w)\Rightarrow w_i \leq w)\land \t_i ^{\f _i(\bar w,v)}\Big) \land \label{EQ972}\\
   &    &\textstyle \bigwedge _{\{i,j\}\in [I]^2}\forall u,v \;\Big(\f _i (\bar w,u)\land \f _j (\bar w,v)\Rightarrow (u\leq v \Leftrightarrow w_i \leq w_j)\Big)\Big]\nonumber.
\end{eqnarray}
\noindent
Now, $\X\models \p ^{\rm po}\land \p$, because taking $y_i:=r_i$, for $i<n$, by Fact \ref{T700}(a)  ({\sc i})--({\sc v}) are true; so, $\t _\X \in \Th (\X)$.

Conversely, assuming that $\Y \models \t _\X$, we prove that $\Y \equiv \X$.
So $\Y$ is a partial order, $\Y \models \p$
and taking $y_0, \dots ,y_{n-1}\in Y$ provided by (\ref{EQ972}) we have ({\sc i})--({\sc v}).
By ({\sc v}) we have $\Y=\sum _{\{ y_i :i<n\}}D_{\f _i (\bar y,v),\Y}$
and, since by ({\sc i}) the mapping $i\mapsto y_i$ is an isomorphism from $\BI =\la n,\leq _\BI\ra$ onto  $\{ y_i:i<n\}\subseteq \X$,
we have $\Y=\sum _{\BI}D_{\f _i (\bar y,v),\Y}$.
By ({\sc iv}) we have $D_{\f _i (\bar y,v),\Y}\models \t_i$;
more precisely, by (\ref{EQ972}) we have $\Y \models \t_i ^{\f _i(\bar w,v)}[\bar y]$,
which by Fact \ref{T704} gives $D_{\f _i (\bar y,v),\Y}\models \t_i$.
Thus $D_{\f _i (\bar y,v),\Y}\equiv \X_i$, for $i<n$,
and, by Fact \ref{T200}(b), $\Y= \sum _{\BI}D_{\f _i (\bar y,v),\Y} \equiv \sum _{\BI}\X _i=\X$.

(b) If $\Y\equiv \sum _{\BI}\X_i$, then, taking finite axiomatizations $\t _i$ of $\X _i$, for $i<n$,
by Fact \ref{T700}(b) we obtain a decomposition $\Y=\sum _{\BI}\Y _i$
such that for each $i<n$ we have $\Y_i \models \t _i$ and, hence, $\Y_i \equiv \X _i$.
The converse follows from Fact \ref{T200}(b).

(c) If $\X_i$-s satisfy VC, then the theory $\CT$ is actually Vaught's (see (\ref{EQ741}).
Namely, for $i<n$ we have $I(\X _i)\leq \o$ or $I(\X _i)= \c$;
so, if $\Z\models \t_i$, then $\Th (\Z)=\Th (\X _i)$
and, hence, $I(\Z)\leq \o$ or $I(\X)= \c$.
Now, the claim follows from Fact \ref{T712}.

(d) If $I(\X _i)\in \{ 1,\c\}$, for all $i<n$, then $\prod _{i<n}I(\X _i)\in \{ 1,\c\}$ and the claim follows from (c).
\hfill $\Box$
\begin{ex}\label{EX100}
A finitely axiomatizable FLD$_0$-theory having a decomposition with a summand which is not finitely axiomatizable. \rm
By Fact \ref{T950}(b) the linear order $\X =\X_0 +\X _1 +\X _2$, where $\X _0\cong 1$ and $\X_1,\X _2 \cong \Z$ is finitely axiomatizable.
But there is a decomposition $\X =\Y_0 +\Y _1 +\Y _2$, where $\Y _0\cong 1 +\o ^*$, $\Y _1\cong \o +\o ^*$ and $\Y _2\cong \o$,
so $\Y _i$, $i<3$, are partial orders having a smallest element and, by Fact \ref{T950}(c), $\Y_1$ is not finitely axiomatizable.
\end{ex}
Clearly, for FLD$_1$-theories we have a dual of Theorem \ref{T932}.
\section{Sums of products of finitely axiomatizable rooted trees $\la\la\CC ^{\rm rt}_{\rm fa} \ra_\Pi\ra_\Sigma$}\label{S7}
Let $\CC ^{\rm rt}_{\rm fa}$ denote the class of finitely axiomatizable rooted trees
and let  $\la\la\CC ^{\rm rt}_{\rm fa} \ra_\Pi\ra_\Sigma$ be its closure described in Fact \ref{T944}.
Since $\CC ^{\rm rt}_{\rm fa}\subset\CC ^{\rm rt}$, by Theorem \ref{T907} Vaught's conjecture is true for each poset $\X\in \la\la\CC ^{\rm rt}_{\rm fa} \ra_\Pi\ra_\Sigma$.
Here we show that, in addition, such $\X$ is finitely axiomatizable and describe all models of $\Th (\X )$.
\begin{te}\label{T943}
If $\X\in \la\la\CC ^{\rm rt}_{\rm fa} \ra_\Pi\ra_\Sigma$
and $\X \cong \sum _\BI \prod _{j<m_i}\X_{i,j}$, where  $\BI \in \CC ^{\rm fin}$ and  $\X_{i,j}\in \CC ^{\rm rt}_{\rm fa}$, for $j<m_i$ and $i\in I$, then

(a) $\X$ is finitely axiomatizable;

(b) $\Y \equiv \X$ iff $\Y \cong \sum _{\BI }\prod _{j<m_i}\Y_{i,j}$, where $\Y_{i,j}\equiv \X_{i,j}$, for each $j<m_i$ and $i\in I$.

\noindent
In addition, $\CT :=\Th (\X)$ satisfies Vaught's conjecture and (a), (b) and (c) of Theorem \ref{T907} are true.
\end{te}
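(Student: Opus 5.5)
The plan is to reduce everything to Theorem \ref{T932} applied to the decomposition $\X=\sum_\BI\X_i$ with $\X_i:=\prod_{j<m_i}\X_{i,j}$. Since $\Th(\X)$ and the models of $\Th(\X)$ are isomorphism-invariant, I will assume w.l.o.g.\ that $\X=\sum_\BI\prod_{j<m_i}\X_{i,j}$. Each $\X_i$ has a smallest element, so $\CT$ is an FLD$_0$-theory and $\sum_\BI\X_i\in\CD(\CT)$. The key step is to show that every summand $\X_i$ is finitely axiomatizable, so that this decomposition is a finitely axiomatizable decomposition in the sense of Section \ref{S6}.

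For the key step, fix $i\in I$. If $|X_i|=1$, a sentence stating ``exactly one element'' axiomatizes $\Th(\X_i)$. Otherwise, I discard the one-element factors; a one-element rooted tree is trivially finitely axiomatizable, and removing it does not change $\X_i$ up to isomorphism. So I may assume $|X_{i,j}|>1$ for all $j<m_i=:n$. Let $\t_j$ be a finite axiomatization of $\Th(\X_{i,j})$. By Theorem \ref{T939} (with $k=n$), or by (\ref{EQ942}) when $n=1$, the class $\CC_n^{\t_0,\dots,\t_{n-1}}$ is defined by a sentence $\eta_i$. Clearly $\X_i\models\eta_i$. Conversely, suppose $\Z\models\eta_i$. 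Then $\Z\cong\prod_{j<n}\Z_j$ with $\Z_j\in\CC^{\rm rt}_{>1}$ and $\Z_j\models\t_j$, hence $\Z_j\equiv\X_{i,j}$. By Fact \ref{T043}(c) it follows that $\Z\equiv\X_i$. Thus $\eta_i$ axiomatizes $\Th(\X_i)$.

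With this in hand, Theorem \ref{T932}(a) immediately gives (a). For (b), Theorem \ref{T932}(b) says $\Y\equiv\X$ iff $\Y=\sum_\BI\Y_i$ with $\Y_i\equiv\X_i$. By Fact \ref{T544}(a), $\Y_i\equiv\prod_j\X_{i,j}$ iff $\Y_i\cong\prod_j\Y_{i,j}$ with $\Y_{i,j}\equiv\X_{i,j}$. Fact \ref{T200}(a) then converts this into $\Y\cong\sum_\BI\prod_j\Y_{i,j}$. For the converse, Fact \ref{T043}(c) and Fact \ref{T200}(b) give elementary equivalence directly. The only subtlety is to choose copies with pairwise disjoint domains so that the sums are well defined, as in the footnote before Fact \ref{T945}. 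The remaining claims (VC and parts (a)--(c) of Theorem \ref{T907}) hold because $\CC^{\rm rt}_{\rm fa}\subset\CC^{\rm rt}$, so Theorem \ref{T907} applies verbatim.

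The main obstacle is the key step, namely that a finite product of finitely axiomatizable rooted trees is finitely axiomatizable. This depends entirely on the definability result of Theorem \ref{T939}, together with the care needed for trivial factors and for the case $n=1$. Everything else is routine bookkeeping with the earlier facts.
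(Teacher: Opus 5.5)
Your proposal is correct and follows the paper's own proof. The paper likewise first shows (Claim \ref{T921}) that each product $\prod_{j<m_i}\X_{i,j}$ is finitely axiomatizable, using the sentence defining $\CC_n^{\t_0,\dots,\t_{n-1}}$ from Theorem \ref{T939}. It then gets (a) and (b) from Theorem \ref{T932}, Fact \ref{T544}(a), Fact \ref{T200} and Fact \ref{T043}(c), and inherits the Vaught-type claims from Theorem \ref{T907}. The only minor difference is in the axiomatization step: you deduce $\Z\equiv\X_i$ directly via Fact \ref{T043}(c), whereas the paper routes this through Fact \ref{T544}(a); both work.
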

\dok
First we consider the closure of $\CC ^{\rm rt}_{\rm fa}$ under products, $\la\CC ^{\rm rt}_{\rm fa} \ra_\Pi$ and prove the following claim.
\begin{cla}\label{T921}
If $\X =\prod _{i<n}\X_i$, where $\X _i\in \CC ^{\rm rt}_{\rm fa}$, for $i<n$, then $\X$ is finitely axiomatizable.
\end{cla}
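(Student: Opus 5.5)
We reduce to the first-order definable classes of Section \ref{S5}. Factors of size $1$ can be dropped, since $\prod_{i\in I}\X_i\cong\prod_{i\in J}\X_i$ with $J=\{i:|X_i|>1\}$. If all factors are trivial, $\X$ is a one-element poset and is axiomatized by $\forall u,v\,(u=v)$. So we may assume that $n\geq 1$ and $\X_i\in \CC^{\rm rt}_{>1}$ for all $i<n$. For $n=1$ the claim is the hypothesis $\X_0\in\CC^{\rm rt}_{\rm fa}$, so let $n\geq 2$.

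For each $i<n$ let $\t_i\in\Th(\X_i)$ be a sentence axiomatizing $\Th(\X_i)$. We may add to $\t_i$ the sentence $\t^{\rm rt}_{>1}$, since it holds in $\X_i$. Then every model of $\t_i$ is a rooted tree of size $>1$ elementarily equivalent to $\X_i$, and $\Con(\t^{\rm rt}_{>1}\land\t_i)$ holds. By Theorem \ref{T939} with $k=n$, there is a sentence $\t$ (explicitly $\t_{\CC_n}\land\p_{\s_{\t_0},\dots,\s_{\t_{n-1}}}$) defining the class $\CC_n^{\t_0,\dots,\t_{n-1}}$. We claim that $\t$ axiomatizes $\Th(\X)$.

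First, $\X\models\t$, since $\X=\prod_{i<n}\X_i$ with $\X_i\models\t_i$, so $\X\in\CC_n^{\t_0,\dots,\t_{n-1}}$. Conversely, let $\Y\models\t$. Then $\Y\cong\prod_{i<n}\Z_i$, where $\Z_i\in\CC^{\rm rt}_{>1}$ and $\Z_i\models\t_i$, hence $\Z_i\equiv\X_i$ for all $i<n$. By Fact \ref{T043}(c), $\prod_{i<n}\Z_i\equiv\prod_{i<n}\X_i$, so $\Y\equiv\X$. Thus $\Mod(\t)=\Mod(\Th(\X))$.

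The step needing the most care is the matching of factors. The sentence of Theorem \ref{T939} assigns $\t_i$ to the $i$-th factor only up to the permutation built into its proof, but the class in (\ref{EQ979}) is stated with a fixed indexing, so $\Z_i\models\t_i$ for each $i$ and Fact \ref{T043}(c) applies factorwise. The other point to check is that $\Con(\t^{\rm rt}_{>1}\land\t_i)$ holds, which is witnessed by $\X_i$ itself. Alternatively, Fact \ref{T544}(a) gives the converse direction directly: any model of $\Th(\X)$ is a product of models of the $\Th(\X_i)$. That fact is not needed for the claim, though.
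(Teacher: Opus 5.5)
Your proposal is correct and follows essentially the paper's proof: you drop trivial factors, take the sentence $\t_{\CC _n}\land\p_{\s_{\t_0},\dots,\s_{\t_{n-1}}}$ from Theorem \ref{T939}, and use Fact \ref{T043}(c) to show that every model of it is elementarily equivalent to $\X$. The only difference is the direction ``$\Y\equiv\X\Rightarrow\Y\models\t$'': you get it from the direct observation $\X\models\t$, whereas the paper invokes Fact \ref{T544}(a). Your version is slightly more economical, since it does not need the nontrivial half of that fact.
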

\dok
We first regard the case when $\X _i\in \CC ^{\rm rt}_{>1}$, for all $i<n$,
and assuming that $\t _i\in \Th (\X_i)$ is a finite axiomatization of $\Th (\X_i)$, for $i<n$,
prove that $\t_{\CC _n} \land \p _{\s_{\t_0},\dots, \s_{\t_{n-1}}}$  (see (\ref{EQ980})) is a finite axiomatization of $\Th (\prod _{i<n}\X_i)$.
So, we have to prove that for each $\Y \in \Mod _{L_b}$ we have $\Y\models \t_{\CC _n} \land \p _{\s_{\t_0},\dots, \s_{\t_{n-1}}}$ iff $\Y \equiv \prod _{i<n}\X_i$;
that is, by Theorem \ref{T939} and Fact \ref{T544}(a), that (i) $\Leftrightarrow$ (ii), where

(i) $\exists \Y _0,\dots,\Y _{n-1}\in \CC ^{\rm rt}_{>1}\;\;(\Y \cong \prod _{i<n}\Y _i \land \forall i<n \;\Y _i\models \t _i)$,

(ii) $\exists \Y _0,\dots,\Y _{n-1}\in \Mod _{L_b}\;\;(\Y \cong \prod _{i<n}\Y _i \land \forall i<n \;\Y _i\equiv \X _i)$.

\noindent
Assuming (i) for each $i<n$ we have $\Y _i\models \t _i$ and, hence, $\Y _i\equiv \X _i$; so, (ii) is true.
Assuming (ii) for each $i<n$ we have $\X _i\in \CC ^{\rm rt}_{>1}$ and $\Y _i\equiv \X_i$,
which gives $\Y _i\in \CC ^{\rm rt}_{>1}$ and $\Y _i\models \t _i$;
so, (i) is true.

Now we consider the general situation.
Let $J:=\{ i<n : |X_i|>1\}$.
If $J=\emptyset$, then $|X|=1$ and, clearly, $\X$ is finitely axiomatizable.
Otherwise we have $\X \cong \prod _{i\in J}\X _i$
and $\X_i \in \CC ^{\rm rt}_{>1}$, for $i\in J$.
As above, $\prod _{i\in J}\X _i$ is finitely axiomatizable
and, clearly, $\Th (\X)=\Th (\prod _{i\in J}\X _i)$.
\kdok
Now we prove the theorem.
Since $\X \cong \X ':=\sum _\BI \prod _{j<m_i}\X_{i,j}$, which implies $\Th (\X)=\Th (\X')$,
w.l.o.g.\ we assume that $\X = \sum _\BI \prod _{j<m_i}\X_{i,j}$.

(a) By Claim \ref{T921} the products $\X _i:=\prod _{j<m_i}\X_{i,j}$, $i<n$, are finitely axiomatizable.
Since $\min \X _i$ exists for each $i\in I$,
$\X =\sum _\BI \X_i$ is a FLD$_0$ poset
and by Theorem \ref{T932}(a) $\X$ is finitely axiomatizable.

(b) If $\Y \equiv \X$,
then by Theorem \ref{T932}(b) $\Y = \sum _{\BI }\Z_i$, where $\Z_i \equiv \X _i$, for $i<n$.
Thus for each $i<n$ we have $\Z_i \equiv \prod _{j<m_i}\X_{i,j}$
and, by Fact \ref{T544}(a), $\Z_i \cong \prod _{j<m_i}\Y_{i,j}$, where $\Y_{i,j}\equiv \X_{i,j}$, for each $j<m_i$.
So, by Fact \ref{T200}(a) we have $\Y = \sum _{\BI }\Z_i\cong \sum _{\BI }\prod _{j<m_i}\Y_{i,j}$.
Conversely, let $\Y \cong \sum _{\BI }\Y _i$,
where for each $i\in I$ we have $\Y _i=\prod _{j<m_i}\Y_{i,j}$ and $\Y_{i,j}\equiv \X_{i,j}$, for each $j<m_i$.
Then by Fact \ref{T043}(c) for $i\in I$ we have  $\Y _i\equiv\prod _{j<m_i}\X_{i,j}=:\X_i$
and, by Fact \ref{T200}(b),   $\Y \cong \sum _{\BI }\Y _i \equiv \sum _{\BI }\X _i=\X$.
\kdok
Concerning the applications of Theorem \ref{T943}, the following statement shows that $\X_r\in\CC ^{\rm rt}_{\rm fa}$, for each finitely axiomatizable tree $\X$.
\begin{prop}\label{T920}
If $\X \in \CC ^{\rm rt}_{>1}$, then $\Th (\X)$ is finitely axiomatizable iff $\Th (\X ^+)$ is finitely axiomatizable.
\end{prop}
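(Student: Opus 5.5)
The two implications go in opposite directions, and I will handle them separately. Throughout, $\X\in\CC ^{\rm rt}_{>1}$, $\t^{\rm rt}_{>1}$ is a sentence defining $\CC ^{\rm rt}_{>1}$, and $\f (v):=\exists u\,(\forall w\,(u\leq w)\land v>u)$, so that $\X^+=D_{\f (v),\X}$ for every rooted tree $\X$ of size $>1$ (as in the proof of Lemma \ref{T938}).

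($\Leftarrow$) Let $\s\in\Th (\X^+)$ axiomatize $\Th (\X^+)$. I claim that $\t^{\rm rt}_{>1}\land \s^{\f}$ axiomatizes $\Th (\X)$.

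First, $\X$ satisfies this sentence: $\X\models\t^{\rm rt}_{>1}$, and $\X\models \s^{\f}$ by Fact \ref{T704}.

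Conversely, let $\Y\models \t^{\rm rt}_{>1}\land\s^{\f}$. Then $\Y$ is a rooted tree of size $>1$, and by Fact \ref{T704} we get $\Y^+\models\s$, hence $\Y^+\equiv\X^+$. Now $\Y = 1+\Y^+$ and $\X=1+\X^+$ are lexicographic sums over the two-element chain. Fact \ref{T200}(b) therefore gives $\Y\equiv\X$. This is exactly the argument already used in Lemma \ref{T938}.

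($\Rightarrow$) Let $\t$ axiomatize $\Th (\X)$. Since $\t^{\rm rt}_{>1}\land\t$ is consistent (witnessed by $\X$), Lemma \ref{T938} provides $\s_\t$ such that, for every $\Z\in\CC ^{\rm rt}_{>1}$, we have $\Z\models\t$ iff $\Z^+\models\s_\t$.

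I would axiomatize $\Th (\X^+)$ by $\s_\t\land\t^{\rm tree}$, where $\t^{\rm tree}$ says the structure is a tree. This sentence is true in $\X^+$, because $\X^+$ is a tree and $\X\models\t$.

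Conversely, let $\W\models\s_\t\land\t^{\rm tree}$. Form $\Z:=1+\W$, i.e.\ add a new root below $\W$. Then $\Z$ is a rooted tree of size $>1$ with $\Z^+\cong\W$. Hence $\Z^+\models\s_\t$, so $\Z\models\t$ and therefore $\Z\equiv\X$. Fact \ref{T922}, applied to the formula $\f$, now yields $\W\cong\Z^+\equiv\X^+$.

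The only delicate point is this last step: we must check that an arbitrary model of the candidate axiom really arises as $\Z^+$ for some rooted tree $\Z$. This is why $\t^{\rm tree}$ has to be included; it guarantees that $1+\W$ is a tree. Nonemptiness of $\W$ is automatic, since $\L_b$-structures are nonempty. Beyond that, everything reduces to the cited facts, so I expect no serious obstacle; the main work was already done in Lemma \ref{T938}.
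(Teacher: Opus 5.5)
Your proof is correct and follows the paper's own argument essentially verbatim. The paper likewise axiomatizes $\Th(\X^+)$ by $\t^{\rm tree}\land\s_\t$, using Lemma \ref{T938}, the one-point extension $\Z_r$ and Fact \ref{T922}, and axiomatizes $\Th(\X)$ by $\t^{\rm rt}_{>1}\land\s^{\f}$, using Facts \ref{T704} and \ref{T200}(b). The only cosmetic difference is that you verify the candidate sentence in $\X^+$ (resp.\ $\X$) itself rather than in an arbitrary elementarily equivalent structure, which is equivalent since it is a sentence.
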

\dok
Assuming that $\Y\equiv \X$ iff $\Y\models \t$, for all $\Y\in \Mod _{L_b}$, we show that $\Z\equiv \X ^+$ iff $\Z\models \t ^{\rm tree} \land \s _\t$.
If $\Z\equiv \X ^+$, then $\Z\models \t ^{\rm tree}$
and, by Fact \ref{T200}(b), $\Z _r \equiv \X$,
which gives $\Z _r \models \t$ and $\Z_r \in \CC ^{\rm rt}_{>1}$;
so, by Lemma \ref{T938}, $\Z =(\Z_r)^+\models  \s _\t$.
Conversely, if $\Z\models \t ^{\rm tree} \land \s _\t$,
then $\Z_r \in \CC ^{\rm rt}_{>1}$ and $(\Z _r)^+ \models  \s _\t$,
by Lemma \ref{T938} we have $\Z _r \models  \t$,
by the assumption $\Z_r \equiv \X$
and, by Fact \ref{T922}, $\Z\equiv \X ^+$.

Assuming that $\Z\equiv \X ^+$ iff $\Z\models \s $, we show that $\Y\equiv \X$ iff $\Y\models \t^{\rm rt}_{>1}\land \s ^\f$.
If $\Y\equiv \X$, then $\Y\models \t^{\rm rt}_{>1}$
and, by Fact \ref{T922}, $\Y^+\equiv \X ^+$,
which gives $D_{\f,\Y}=\Y ^+\models \s$;
so, by Fact \ref{T704}, $\Y \models \s^\f$.
Conversely, if $\Y\models \t^{\rm rt}_{>1}\land \s ^\f$,
then $\Y^+\models \s $;
so $\Y ^+ \equiv \X ^+$
and, by Fact \ref{T200}(b), $\Y\equiv \X$.
\hfill $\Box$
\paragraph{Finitely axiomatizable $\o$-categorical theories}
The following application of Theorem \ref{T943} is related to a general question: Which $\o$-categorical theories are finitely axiomatizable?
We recall some classical results.
\begin{fac}\label{T946}
If $\X$ is an $\o$-categorical partial order, then we have

(a) (Rosenstein \cite{Rosen1}) If $\X$ is a linear order, then $\X$ is finitely axiomatizable;

(b) (Schmerl  \cite{Sch0}) If $\X$ is a tree, then $\X$ is finitely axiomatizable iff $\X$ is finite-branching;

(c) (Schmerl \cite{Sch1}) If $\X$ is a partial order of finite width, then $\X$ is finitely axiomatizable.
\end{fac}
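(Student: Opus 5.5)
The three items of Fact \ref{T946} are classical theorems, so in the paper they can be justified by citation. Still, here is how I would organise a proof, reusing the tools of Section \ref{S6} where possible.

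First, (a) is a special case of (c), since a linear order is a partial order of width $1$. If one wants (a) independently, I would use Rosenstein's classification of countable $\o$-categorical linear orders. These are exactly the orders obtained from $1$ and the rationals $\mathbb Q$ by finitely many applications of two operations: finite sums $\BL_0+\dots+\BL_{k-1}$, and finite shuffles, where $\mathbb Q$ is densely coloured by finitely many colours and each point of colour $c$ is replaced by a copy of $\BL_c$. The argument is then an induction on this construction.

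\begin{itemize}
\item The base cases $1$ and $\mathbb Q$ are finitely axiomatizable: the latter as a dense linear order without endpoints, by Cantor's theorem.
\item For a finite sum, Rosenstein's analysis shows that the summands can be taken to be uniformly definable convex pieces, for instance maximal convex pieces of a given definable type. One then writes a sentence in the style of (\ref{EQ972}). It asserts that the order splits into $k$ consecutive convex pieces, each satisfying the relativised axiom $\t_i^{\f_i}$ of its summand, and it is proved correct exactly as in Theorem \ref{T932}(a), using Fact \ref{T704} and Fact \ref{T200}(b).
\item For a shuffle, the sentence says two things. First, the order is partitioned by a definable convex equivalence relation into classes each satisfying one of the finitely many sentences $\t_c$. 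Second, the quotient is dense without endpoints and each colour class is dense in it. Elementary equivalence with the intended model then follows by a back-and-forth argument on the quotient.
\end{itemize}

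For (c) I would follow Schmerl. An $\o$-categorical partial order of finite width has only finitely many $1$-types. Finite width, combined with $\o$-categoricity, forces a decomposition into finitely many definable chains. Each chain is an $\o$-categorical linear order and hence finitely axiomatizable by (a). The remaining work is to show that the finitely many ways these chains interleave are captured by a single sentence.

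For (b), the direction ``finitely axiomatizable $\Rightarrow$ finite-branching'' is the part I would attack by compactness. If some node of the $\o$-categorical tree has infinitely many immediate branches, then any single sentence true in the tree also holds in a variant with finitely many branches at that node. This yields a non-isomorphic countable model, contradicting $\o$-categoricity of the axiomatized theory. The converse direction uses Schmerl's inductive description of $\o$-categorical finite-branching trees, again by sum-type arguments as in Theorem \ref{T932}.

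The main obstacle throughout is the precise structural classification step, Rosenstein's for (a) and Schmerl's for (b) and (c). I would not reprove these but cite \cite{Rosen1}, \cite{Sch0} and \cite{Sch1} for them.
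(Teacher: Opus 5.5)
The paper gives no proof of Fact \ref{T946}; it only cites Rosenstein and Schmerl. Your bottom line, to cite these results, therefore coincides with the paper, and your remark that (a) is the width-$1$ case of (c) is correct. The outline you offer, however, contains one false step and one unjustified step.

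In (c), the claim that finite width together with $\o$-categoricity forces a decomposition into finitely many definable chains is false. Let $\X$ be $\mathbb Q\times 2$ ordered by $\la p,i\ra<\la q,j\ra$ iff $p<q$, i.e.\ the lexicographic sum over $\mathbb Q$ of two-element antichains. It has width $2$. It is $\o$-categorical: every automorphism of $\mathbb Q$ lifts, and the two points of each fibre can be swapped independently, so there are finitely many orbits on $n$-tuples. Now suppose a chain $C$ is definable from parameters $\bar a$, and let $p$ be any rational that is not a first coordinate of an entry of $\bar a$. The swap of $\la p,0\ra$ and $\la p,1\ra$ fixes $\bar a$, hence preserves $C$. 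Since these two points are incomparable, $C$ contains neither of them, so $C$ is finite. Thus no covering by finitely many definable chains exists, and the plan ``axiomatize each chain, then their interleaving'' has no starting point. Any correct analysis must handle sum/shuffle-type constructions, and this $\X$ is itself a $\mathbb Q$-shuffle of one two-element antichain.

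A milder form of the same problem affects your sketch of (a). The pieces of an arbitrary sum or shuffle representation need not be definable: $\mathbb Q+\mathbb Q\cong\mathbb Q$, and a shuffle of $1$ and $\mathbb Q$ is again $\mathbb Q$. So the induction must run over a normal form in which the pieces are definable, and producing that normal form is the real content of Rosenstein's work.

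In (b), the direction ``finitely axiomatizable $\Rightarrow$ finite-branching'' is not established by your argument. First, finite-branching is a uniform bound that also covers the components of $\{z:z>(\cdot,x)\cap(\cdot,y)\}$ for incomparable $x,y$. By $\o$-categoricity, failure of the bound gives a node \emph{or such a cut} with infinitely many components, but you only treat nodes. Second, you assert without proof that a given sentence survives pruning at that node. Third, you assert that the pruned tree is not isomorphic to $\X$, which is unjustified: every other node of the same type is still infinitely branching, and nothing you say rules out an isomorphism.

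A repair goes as follows; the cut case is handled the same way.
\begin{itemize}
\item Let $\s$ have quantifier rank $q$. The tree is a lexicographic sum over $X\setminus(x,\cdot)$ whose only non-trivial summand is the cone $[x,\cdot)$, and this cone is a root followed by the disjoint union of the branches.
\item The $\equiv_q$-class of a disjoint union depends only on the multiplicities of the $\equiv_q$-classes of its components, truncated at $q$. By the $\equiv_q$ version of Fact \ref{T200}(b), you may therefore keep only finitely many branches without changing the $\equiv_q$-class.
\item Keep enough branches that their number $M$ exceeds every finite branching number occurring in $\X$; there are finitely many such numbers, by $\o$-categoricity.
\item Then the pruned tree satisfies $\s$ but also ``some node has exactly $M$ branches'', which fails in $\X$. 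So $\s$ does not axiomatize $\Th(\X)$.
\end{itemize}
The paper's Fact \ref{T949} gets this direction more cleanly and without $\o$-categoricity. By Schmerl's Corollary 2.6, an axiom of $\Th(\X)$ has a finite-branching, hence $\leq n$-branching, model, and the $\Pi_2$ sentence $\t_n$ then transfers back to $\X$.
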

We recall that a partial order $\X$ is of {\it finite width} iff there is $n>1$ such that $\X$ has no antichains of size $n$,
and that finite-branching trees were introduced by Schmerl in \cite{Sch0}.
Since in this paper we work with rooted trees we note that a tree $\X\in \CC ^{\rm rt}$ is finite-branching
iff there is $n\in \N$ such that whenever $x,y\in X$

- the subtree $(x,\cdot)$ has $\leq n$ connectivity components,\footnote{The connectivity components of a tree $\X$ are the equivalence classes
corresponding to the equivalence relation $\sim$ on the set $X$ defined by: $x\sim y$ iff there exists $z\leq x,y$.
The binary tree $2^{<\o}$ is finite-branching, but it is not of finite width.}

- the subtree $\{ z\in X : z>(\cdot,x)\cap (\cdot,y)\}$ has $\leq n$ components, if $x$ and $y$ are incomparable.

\noindent
By \cite{Sch0} the class of finite-branching trees is first-order axiomatizable and, hence, $\cong$-closed.
So, the same holds for the class $\CC^{\rm rt}_{\rm fb}$ of finite-branching rooted trees; let $\la \la\CC^{\rm rt}_{\rm fb}\ra_\Pi\ra_\Sigma$ be its closure from Fact \ref{T944}(b).
\begin{te}\label{T947}
For each  partial order $\X \in \la \la\CC^{\rm rt}_{\rm fb}\ra_\Pi\ra_\Sigma$ we have
$$
\X \mbox{ is $\o$-categorical} \Rightarrow \X  \mbox{ is finitely axiomatizable}.
$$
\end{te}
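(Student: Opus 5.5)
We reduce the statement to Theorem \ref{T943}(a) and Fact \ref{T946}(b). Let $\X\cong\sum_{\BI}\prod_{j<m_i}\X_{i,j}$, where $\BI\in\CC^{\rm fin}$ and $\X_{i,j}\in\CC^{\rm rt}_{\rm fb}$ for all indices, and assume that $\X$ is $\o$-categorical.

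First, by Theorem \ref{T907}(b), applicable since $\CC^{\rm rt}_{\rm fb}\subset\CC^{\rm rt}$, the theory $\Th(\X)$ is $\o$-categorical iff every $\Th(\X_{i,j})$ is $\o$-categorical. Hence each factor $\X_{i,j}$ is an $\o$-categorical tree. The convention $I(\CT)=1$ for theories with a finite model causes no trouble here: a finite factor is trivially finitely axiomatizable.

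Second, each $\X_{i,j}$ is an $\o$-categorical finite-branching tree, so Schmerl's result, Fact \ref{T946}(b), gives that $\X_{i,j}$ is finitely axiomatizable. Since $\X_{i,j}$ is also rooted, $\X_{i,j}\in\CC^{\rm rt}_{\rm fa}$. Thus $\X\in\la\la\CC^{\rm rt}_{\rm fa}\ra_\Pi\ra_\Sigma$ with the same representation, and Theorem \ref{T943}(a) yields that $\X$ is finitely axiomatizable.

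The one point needing care is the match between definitions. Schmerl's notion of finite-branching is stated for general (possibly unrooted) trees, whereas here it is formulated for rooted trees via the two component-counting conditions listed before the theorem. I would check that these coincide on rooted trees, or simply take $\CC^{\rm rt}_{\rm fb}$ to mean rooted trees that are finite-branching in Schmerl's sense. The remark in the text that the class is first-order axiomatizable and $\cong$-closed already presupposes this identification, so Fact \ref{T946}(b) applies verbatim.

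Beyond this, the argument is a direct composition of the cited results and involves no real obstacle.
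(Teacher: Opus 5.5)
Your proposal is correct and matches the paper's proof essentially step for step. You represent $\X$ via Fact~\ref{T944}(b), use Theorem~\ref{T907}(b) to get $\omega$-categoricity of each factor $\X_{i,j}$, apply Schmerl's Fact~\ref{T946}(b) to place each factor in $\CC^{\rm rt}_{\rm fa}$, and conclude with Theorem~\ref{T943}(a). Your side remarks about finite factors and about identifying the rooted-tree formulation of finite-branching with Schmerl's notion are harmless, since the paper takes the same identification for granted.
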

\dok
Since $\X \in \la \la\CC^{\rm rt}_{\rm fb}\ra_\Pi\ra_\Sigma$,
by Fact \ref{T944}(b) we have $\X \cong \sum _\BI \prod _{j<m_i}\X_{i,j}$,
where  $\BI \in \CC ^{\rm fin}$ and $\X_{i,j}\in \CC ^{\rm rt}_{\rm fb}$, for $j<m_i$ and $i\in I$.
Since $\X$ is $\o$-categorical,
$\sum _\BI \prod _{j<m_i}\X_{i,j}$ is $\o$-categorical too,
by Theorem \ref{T907}(b) for each $i\in I$ and $j<m_i$  the poset $\X_{i,j}$ is an $\o$-categorical tree,
and since $\X_{i,j}$ is finite-branching, by Fact \ref{T946}(b) $\X_{i,j}$ is finitely axiomatizable; that is $\X_{i,j}\in \CC ^{\rm rt}_{\rm fa}$.
By Theorem \ref{T943}(a) $\X$ is finitely axiomatizable.
\kdok
Concerning Theorems \ref{T943} and \ref{T947} we note that by Fact \ref{T946}(b) $\CC^{\rm rt}_{\rm fa}\cap\CC^{\rm rt}_{\o{\rm-cat}}=\CC^{\rm rt}_{\rm fb}\cap\CC^{\rm rt}_{\o{\rm-cat}}$,
where $\CC^{\rm rt}_{\o{\rm-cat}}$ is the class of $\o$-categorical rooted trees,
and that $\CC^{\rm rt}_{\rm fa}\subset \CC^{\rm rt}_{\rm fb}$, which follows from the next general consequence of Schmerl's results from \cite{Sch0}.
\begin{fac}\label{T949}
Each finitely axiomatizable tree $\X$ is finite-branching.
\end{fac}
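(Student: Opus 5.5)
The plan is to argue by contradiction, using Schmerl's analysis in \cite{Sch0} of how the finite-branching condition is expressed. Suppose $\X$ is a tree and $\Th (\X)$ is axiomatized by a single sentence $\t$ of quantifier rank $q$, but $\X$ is not finite-branching. For $m\in \N$ let $\beta _m$ be the first-order sentence saying that the number $m$ bounds the branching: for all $x,y$, the subtree $(x,\cdot)$ has $\leq m$ connectivity components, and, if $x,y$ are incomparable, the subtree $\{ z : z>(\cdot,x)\cap(\cdot,y)\}$ has $\leq m$ components. These are first-order, since ``$u\sim v$ iff $\exists z\leq u,v$'' is definable inside a definable subtree and ``at most $m$ classes'' is first-order. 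Since $\X$ is not finite-branching, $\X\models \neg\beta _m$ for every $m$. The aim is therefore to find some $m$ and a tree $\Y$ with $\Y\models \t\land \beta _m$. Such a $\Y$ would be $\equiv \X$, since $\t$ axiomatizes $\Th(\X)$, yet $\X\models\neg\beta_m$; this is the contradiction.

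To build $\Y$ I would use a Feferman--Vaught type counting lemma for disjoint unions. The $q$-theory of a disjoint union $\bcd _{j\in J}\Z _j$ depends only on the function sending each of the finitely many rank-$q$ complete types $p$ to $\min(|\{ j:\Z_j\models p\}|, q)$. The same holds for a rooted tree formed by putting a root below a disjoint union of trees. A small Ehrenfeucht--Fra\"iss\'e game argument gives this: Duplicator answers a move in one component by a move in a component of the same $q$-type, and at most $q$ components are ever touched. Let $N(q)$ be $q$ times the number of rank-$q$ types of trees, so $N(q)$ is finite. Then, at any node, all but $N(q)$ of the components above it can be deleted without changing the rank-$q$ theory: keep up to $q$ representatives of each type. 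The same applies to the components above a meet of incomparable points.

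The key step, and the main obstacle, is to carry out this trimming simultaneously at all (possibly infinitely many) nodes while controlling the quantifier rank. Deleting components at one node changes the types of subtrees below it, and a naive transfinite iteration need not preserve $\equiv _q$. My first attempt is to avoid iteration: define $\Y\subseteq \X$ in a single step. To do so, choose, for every node $x$ and every rank-$q'$ type with $q'\leq q$ decreasing along the game, a set of at most $q$ representatives among the components above $x$, in a coherent way. Then show directly by an Ehrenfeucht--Fra\"iss\'e game of length $q$ that $\X\equiv _q \Y$. Duplicator keeps the invariant that, after $k$ moves, corresponding played points lie in components of equal rank-$(q-k)$ type at every branching node relevant to the played configuration, since meets of played points are determined by the play. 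Each component of $\Y$ carries the same trimming inside it, so the invariant should propagate. Then $\Y\models \beta _{N(q)}$ and $\Y\equiv _q\X$, hence $\Y\models \t$, as required.

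If this direct construction gets stuck, the fallback is compactness. Show that $\{\t\}\cup\{\beta _m\}$ is finitely satisfiable by truncating $\X$ to finite height-bounded, width-bounded approximations that are $q$-equivalent, via the local counting lemma applied finitely often, and conclude in the same way.
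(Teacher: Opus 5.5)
Your outer reduction is exactly the paper's: exhibit a tree $\Y\models\t\land\beta_m$ for some $m$, note that $\Y\equiv\X$, and use that $\leq m$-branching is first-order (your $\beta_m$ plays the role of Schmerl's $\Pi_2$ sentence). But the existence of $\Y$ is the entire content of the statement, and the paper does not prove it. It cites Schmerl's Corollary~2.6 in \cite{Sch0}: a sentence true in some tree is true in some finite-branching tree. Your proposal does not establish this either, so there is a genuine gap. Your decisive step ends with ``the invariant should propagate'', and as formulated it does not.

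The disjoint-union counting lemma controls one branching point at a time. To apply it at all branchings simultaneously, you need that a retained component $C\cap Y$ of $\Y$ has the same rank-$(q-k)$ type as the component $C$ of $\X$. That is the very statement you are proving, applied to $C$. A tree need not be well-founded: chains may be dense, and branching may occur at cuts rather than at nodes. So there is no induction on height to break this circularity. Induction on rank does not close it either. A Spoiler move $a$ may have infinitely many branching points below it at which its component was not retained, and Duplicator's answer $b\in Y$ must be rerouted at all of them at once. Moreover, $(\X,a)\equiv_{q-1}(\Y,b)$ depends on the whole chain $(\cdot,a)$, labelled by the types of the bushes hanging off it. This is a generalized sum along a chain and needs a Shelah-type composition theorem. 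It is not determined by the types at the meets of played points, which is all your invariant tracks.

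Your fallback fails outright. Trees are in general not $\equiv_q$ to finite or bounded-height trees: $\la\mathbb Q,\leq\ra$ is a tree axiomatized by a single sentence with no finite model. Compactness contributes nothing here, since you need one model of the single sentence $\t\land\beta_m$ for one $m$. Also, $\beta_m\Rightarrow\beta_{m'}$ for $m\leq m'$, so the set $\{\beta_m:m\in\N\}$ is equivalent to $\beta_1$, i.e.\ to being a chain.

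To complete the argument, either cite Schmerl's Corollary~2.6, after which your proof coincides with the paper's, or give an actual proof of that result.
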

\dok
We use the following facts:
(a) If a sentence $\s$ is true in some tree then it is true in some finite-branching tree (\cite{Sch0}, Corollary 2.6);
(b) For each $n$ there is a $\Pi _2$-sentence $\t _n$ defining the class of $\leq n$-branching trees (\cite{Sch0}, p.\ 124).
So, if $\X$ is a tree and $\s$ is a sentence axiomatizing $\Th (\X)$,
then by (a) there is a finite-branching tree $\Y$ such that $\Y\models \s$;
so $\Y$ is $\leq n$-branching, for some $n$,
and by (b) we have $\Y \models \t _n$.
Since $\Y \equiv \X$ we have $\X \models \t _n$,
and by (b) $\X$ is finite-branching.
\kdok
Let $\CC^{\rm rt}_{\rm fw}$ denote the class of rooted trees of finite width and $\la \la\CC^{\rm rt}_{\rm fw}\ra_\Pi\ra_\Sigma$ its closure from Fact \ref{T944}(b).
The proof of the following statement is a copy of the proof of Theorem \ref{T947}; we use Fact \ref{T946}(c).
\begin{te}\label{T948}
Each $\o$-categorical partial order $\X \in \la \la\CC^{\rm rt}_{\rm fw}\ra_\Pi\ra_\Sigma$ is finitely axiomatizable.
\end{te}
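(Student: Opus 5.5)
We follow the proof of Theorem \ref{T947} almost verbatim, replacing Schmerl's result on trees (Fact \ref{T946}(b)) by his result on partial orders of finite width (Fact \ref{T946}(c)).

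First, by Fact \ref{T944}(b) we write $\X \cong \sum _\BI \prod _{j<m_i}\X_{i,j}$, where $\BI \in \CC ^{\rm fin}$ and $\X_{i,j}\in \CC^{\rm rt}_{\rm fw}$ for all indices. The class $\CC^{\rm rt}_{\rm fw}$ is $\cong$-closed, so this representation is legitimate. Since $\X$ is $\o$-categorical, so is the sum $\sum _\BI \prod _{j<m_i}\X_{i,j}$, because the two have the same theory.

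Since $\CC^{\rm rt}_{\rm fw}\subset\CC^{\rm rt}$, Theorem \ref{T907}(b) applies. It shows that each factor $\X_{i,j}$ is $\o$-categorical.

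Each $\X_{i,j}$ is then an $\o$-categorical partial order of finite width. Fact \ref{T946}(c) therefore makes $\Th(\X_{i,j})$ finitely axiomatizable, so $\X_{i,j}\in \CC ^{\rm rt}_{\rm fa}$. Hence $\X\in \la\la\CC ^{\rm rt}_{\rm fa} \ra_\Pi\ra_\Sigma$, and Theorem \ref{T943}(a) yields that $\X$ is finitely axiomatizable.

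The only point needing care is the degenerate case where some $\X_{i,j}$ has width $1$ or is finite. A rooted tree of width $1$ is a linear order with a minimum, and the definition of finite width (no antichain of size $n$ for some $n>1$) covers it directly. In any case, Fact \ref{T946}(a) also handles $\o$-categorical linear orders, and finite structures are trivially finitely axiomatizable. So no step is a real obstacle; all the substantive work is already contained in Theorems \ref{T907} and \ref{T943}.
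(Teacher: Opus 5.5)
Your proof is correct and matches the paper's argument: the paper says the proof of Theorem~\ref{T948} is a copy of that of Theorem~\ref{T947}, namely decompose via Fact~\ref{T944}(b), get $\o$-categoricity of the factors from Theorem~\ref{T907}(b), and conclude with Theorem~\ref{T943}(a), using Fact~\ref{T946}(c) in place of Fact~\ref{T946}(b). Your extra remarks on finite or width-$1$ factors are harmless but unnecessary: the paper's conventions already count finite structures as $\o$-categorical, and such structures are trivially finitely axiomatizable.
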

\section{Examples obtained from finitely axiomatizable chains}\label{S8}
In Theorem \ref{T943} the properties of the theory $\Th (\X)$, where $\X=\sum _\BI \prod _{j<m_i}\X_{i,j}\in \la\la\CC ^{\rm rt}_{\rm fa} \ra_\Pi\ra_\Sigma$
are described via the properties of the theories of the rooted trees $\X_{i,j}$.
Here we isolate a subclass $\CC \subset \CC ^{\rm rt}_{\rm fa}$ such that for $\X \in \la\la\CC \ra_\Pi\ra_\Sigma$
the same properties of $\Th (\X)$ are described via the properties of the theories of the finitely axiomatizable linear orders composing the trees $\X_{i,j}$.
Let $\CC ^{\rm lo}$ denote the class of linear orders.
\begin{fac}\label{T950}\rm
We recall basic facts concerning the class $\CC ^{\rm lo}_{\rm fa}$ of finitely axiomatizable linear orders.

(a) Let $\M $ be the smallest class of linear order types such that
(c1) $1\in \M $,
(c2) $\t +\t'\in \M $, if $\t ,\t'\in \M $,
(c3) $\t \o ,\t\o^* \in \M $, if $\t \in \M $,
(c4) $\s (F)\in \M$, if $F\in [\M]^{<\o}$.
Then for each $\X \in \CC ^{\rm lo}$ and each $n$ there is $\Y\in \M \cap \CC ^{\rm lo}_{\rm fa}$
such that $\X\equiv _n \Y$ (Amit and Shelah \cite{Amit}, Schmerl (unpublished) Myers \cite{My}, see also \cite{Rosen}, p.\ 256).
Thus, if $\X \in \CC ^{\rm lo}_{\rm fa}$, then $\X\equiv\Y$, for some $\Y\in \M$.

(b) $\M _2\subset \CC ^{\rm lo}_{\rm fa}$,
where $\M _2$ is the closure under (c1), (c2), (c4) and (c3') $\t \Z \in \M$, if $\t \in \M$;
(Rubin \cite{Rub}, see also \cite{Rosen}, p.\ 312).
The subclass  $\M _1$ of $\M _2$ obtained from (c1), (c2) and (c4)
is exactly the class of countable $\o$-categorical linear orders
(Rosenstein \cite{Rosen1}, see also \cite{Rosen}, p.\ 299).
In addition, $\a,\a^* \in \M \cap \CC ^{\rm lo}_{\rm fa}$, for each ordinal $\a <\o ^\o$,
while $\o ^\o \not\in \CC ^{\rm lo}_{\rm fa}$ (see \cite{Rosen}, p.\ 262, 257).

(c) $\o +\o ^* \not\in \CC ^{\rm lo}_{\rm fa}$ (see \cite{Rosen}, p.\ 253).
Moreover, if there are $a<b\in X$ such that $[a,b]\cong \o +\Z\Y +\o^*$,
where $\Y\in\CC ^{\rm lo}$ and $a$ has no immediate predecessor and $b$ has no immediate successor,
then $\X \not\in \CC ^{\rm lo}_{\rm fa}$ (see \cite{Rosen}, p.\ 337).
Thus the class $\CC ^{\rm lo}_{\rm fa}$ is not closed under $+$.
\end{fac}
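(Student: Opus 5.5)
Since Fact \ref{T950} collects classical results on linear orders, I do not plan to reprove the deep parts: the Amit--Shelah--Schmerl--Myers approximation theorem in (a), Rubin's theorem $\M_2\subset\CC^{\rm lo}_{\rm fa}$ and Rosenstein's characterization of $\M_1$ in (b), and the non-axiomatizability criteria in (c). These I will quote from the cited sources, as the statement does (\cite{Amit}, \cite{My}, \cite{Rub}, \cite{Rosen1}, and the indicated pages of \cite{Rosen}). What remains are the three ``thus'' consequences, which follow by short arguments from the cited material.

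For the last sentence of (a), I take $\X\in\CC^{\rm lo}_{\rm fa}$ and a sentence $\s$ axiomatizing $\Th(\X)$, and let $n$ be the quantifier rank of $\s$. By the approximation theorem there is $\Y\in\M\cap\CC^{\rm lo}_{\rm fa}$ with $\X\equiv_n\Y$. Since $\X\models\s$ and $\s$ has rank $n$, we get $\Y\models\s$. Because $\s$ axiomatizes $\Th(\X)$, this gives $\Y\equiv\X$, with $\Y\in\M$ as required.

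For (b), I will note that every ordinal $\a<\o^\o$ and its inverse are built from $1$ by finitely many applications of (c2) and (c3), so they lie in $\M$. Their finite axiomatizability and the failure for $\o^\o$ are quoted from \cite{Rosen} (pp.\ 257, 262).

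For the final clause of (c), I will use (b): $\o$ and $\o^*$ are ordinals below $\o^\o$ and their inverses, so both lie in $\CC^{\rm lo}_{\rm fa}$. Their sum $\o+\o^*$, however, is not in $\CC^{\rm lo}_{\rm fa}$ by the first sentence of (c). Hence $\CC^{\rm lo}_{\rm fa}$ is not closed under $+$.

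The only point needing care is the use of $\equiv_n$ in (a): it must mean agreement on all sentences of quantifier rank at most $n$, which is the convention in \cite{Rosen}. Everything else is bookkeeping with the cited theorems.
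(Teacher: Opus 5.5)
Your proposal is correct and agrees with the paper, which gives no proof of this Fact beyond the cited sources and leaves the ``thus'' clauses implicit. Your quantifier-rank argument for the last sentence of (a) (which in fact only needs $\Y\in\M$) and your use of $\omega,\omega^*\in\CC^{\rm lo}_{\rm fa}$ against $\omega+\omega^*\notin\CC^{\rm lo}_{\rm fa}$ for the final clause of (c) are exactly the intended reasoning.
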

Let $\CC ^{\rm rlo}_{\rm fa}$ denote the class of finitely axiomatizable linear orders with a smallest element.
Recall that  by $\X _r$ we denote a tree obtained from a tree $\X$ by adding a new element $r$ such that $r<x$, for all $x\in X$.
In particular, let $\emptyset _r\cong 1$. The following claim gives an alternative definition of the class $\CC ^{\rm rlo}_{\rm fa}$.
\begin{fac}\label{T941}
$\CC ^{\rm rlo}_{\rm fa} =\{ \BL _r : \BL \in \CC ^{\rm lo}_{\rm fa}\;\mbox{ or }\;\BL=\emptyset\}$.
\end{fac}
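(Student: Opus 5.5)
We prove the two inclusions separately. The tools are Proposition \ref{T920}, which gives the equivalence of finite axiomatizability of $\X$ and of $\X^+$ for $\X\in \CC^{\rm rt}_{>1}$, and the elementary observation that for a linear order $\BL$ the order $\BL_r$ is a linear order with a smallest element and $(\BL_r)^+=\BL$.

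First take $\X\in \CC^{\rm rlo}_{\rm fa}$ and let $0=\min \X$. If $|X|=1$, then $\X\cong 1\cong\emptyset_r$, and we take $\BL=\emptyset$. Otherwise $\X\in\CC^{\rm rt}_{>1}$, since a linear order with a minimum is a rooted tree. Put $\BL:=\X^+$, which is a linear order. Then $\X=\BL_r$ with $r=0$, because $0<x$ for all $x\in X^+$. By Proposition \ref{T920}, $\Th(\X)$ finitely axiomatizable implies $\Th(\X^+)=\Th(\BL)$ finitely axiomatizable, so $\BL\in\CC^{\rm lo}_{\rm fa}$.

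Conversely, if $\BL=\emptyset$, then $\BL_r\cong 1$. This is a linear order with a smallest element, and it is finitely axiomatizable by the sentence $\forall u,v\,(u=v)$, which determines the unique one-element order up to isomorphism. Now let $\BL\in\CC^{\rm lo}_{\rm fa}$ be nonempty. Then $\BL_r$ is a linear order with minimum $r$, and it is a rooted tree of size $>1$ with $(\BL_r)^+=\BL$. Since $\Th(\BL)$ is finitely axiomatizable, Proposition \ref{T920} gives that $\Th(\BL_r)$ is finitely axiomatizable. Hence $\BL_r\in\CC^{\rm rlo}_{\rm fa}$.

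None of the steps is really difficult. The only point that needs care is checking the hypotheses of Proposition \ref{T920}: it requires $\X\in\CC^{\rm rt}_{>1}$, which is why the one-element case is split off. We also have to check that in the proposition $\X^+$ is read as a structure in its own right, so that "finitely axiomatizable" refers to its complete theory, exactly as for $\BL$. If one prefers to avoid the proposition, the backward direction can be done directly. Given an axiom $\s$ for $\Th(\BL)$, the sentence "linear order with a minimum, of size $>1$" $\land\,\s^{\f}$, with $\f(v)$ saying $v$ is not the minimum, axiomatizes $\Th(\BL_r)$ by Facts \ref{T704} and \ref{T200}(b). This is precisely the second half of the proof of Proposition \ref{T920}.
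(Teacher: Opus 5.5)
Your proof is correct and is essentially the paper's own argument: you split off the one-element case and apply Proposition \ref{T920} in each direction, using $\X=(\X^+)_r$ and $\BL=(\BL_r)^+$. One small slip: $\forall u,v\,(u=v)$ also holds in the one-element $L_b$-structure with empty relation, so to axiomatize $1$ in the paper's sense you should add $\forall u\,(u\leq u)$.
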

\dok
Let $\BL \in\CC ^{\rm rlo}_{\rm fa}$ and $r=\min \BL$. If $\BL \cong 1$, then $\BL =\emptyset _r$.
Otherwise $\BL ^+$ is a linear order, $\BL =(\BL ^+)_r$ is finitely axiomatizable
and, by Proposition \ref{T920}, $\BL ^+$ is finitely axiomatizable.
Conversely, $1\in \CC ^{\rm rlo}_{\rm fa}$ is evident
and if $\BL$ is a finitely axiomatizable linear order, then $\BL= (\BL _r)^+$,
and by Proposition \ref{T920}, $\BL _r$ is finitely axiomatizable;
so, $\BL _r\in \CC ^{\rm rlo}_{\rm fa}$.
\kdok
Clearly the class $\CC ^{\rm rlo}_{\rm fa}$ is $\cong$-closed and $ \CC ^{\rm rlo}_{\rm fa}\subset \CC ^{\rm rt}_{\rm fa}\cap \CC ^{\rm rt}_{\rm fmd}$,
where $\CC ^{\rm rt}_{\rm fmd}$ is the class of rooted FMD trees.
Let $\CC$ be the class of lexicographic $\BK$-sums of linear orders from $\CC ^{\rm rlo}_{\rm fa}$, where $\BK$ is a finite rooted tree
$$\textstyle
\CC :=\{ \sum _\BK \BL _k: \BK \in \CC ^{\rm rt}\cap \CC ^{\rm fin} \land \forall k\in K \;\BL _k\in\CC ^{\rm rlo}_{\rm fa}\}.
$$
The class $\CC$ is $\cong$-closed;
namely, if $f:\sum _\BK \BL _k \rightarrow \Y$ is an isomorphism,
then as in Fact \ref{T945} we have $\Y =\sum _\BK \Y _k$,
where $\Y _k\cong \BL _k $ and, hence, $\Y _k \in \CC ^{\rm rlo}_{\rm fa}$, for each $k\in K$.
By Fact \ref{T944}(b) we have
$$\textstyle
\X \in \la\la\CC\ra_\Pi\ra_\Sigma \;\;\mbox{ iff }\;\;\X \cong \sum _{\BI }\prod _{j<m_i}\sum _{\BK_{i,j}}\BL _{i,j,\,k}
$$
where $\BI \in \CC^{\rm fin}$, $m_i\in \N$, $\BK_{i,j}\in \CC ^{\rm fin}\cap \CC^{\rm rt}$ and $\BL _{i,j,\,k}\in \CC ^{\rm rlo}_{\rm fa}$ are pairwise disjoint.
\begin{te}\label{T940}
For each partial order $\X \in \la\la\CC\ra_\Pi\ra_\Sigma$, where $\X \cong \sum _{\BI }\prod _{j<m_i}\sum _{\BK_{i,j}}\BL _{i,j,\,k}$, we have

(a) $\X$ is finitely axiomatizable;

(b) $\Y \equiv \X$ iff $\Y \cong \sum _{\BI }\prod _{j<m_i}\sum _{\BK_{i,j}}\BL _{i,j,\,k}'$, where $\BL _{i,j,\,k}'\equiv \BL _{i,j,\,k}$, for all indices $i,j,k$;

(c) $\X$ is $\o$-categorical iff $\BL _{i,j,\,k}$ is $\o$-categorical, for all indices $i,j,k$; otherwise $I(\X)=\c$.
\end{te}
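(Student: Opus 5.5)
The plan is to reduce Theorem \ref{T940} to Theorem \ref{T943} (and Theorem \ref{T907}) by showing that the trees $\X_{i,j}:=\sum_{\BK_{i,j}}\BL_{i,j,\,k}$ are finitely axiomatizable rooted trees, and then to descend one more level using Theorem \ref{T932} for the FLD$_0$ decomposition $\X_{i,j}=\sum_{\BK_{i,j}}\BL_{i,j,\,k}$.

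First I will check that each $\X_{i,j}\in \CC^{\rm rt}$. Since $\BK_{i,j}$ is a finite rooted tree and every $\BL_{i,j,\,k}$ is a linear order with a minimum, for $x\in \BL_{i,j,\,k}$ the set $(\cdot,x]$ is the sum, along the chain $(\cdot,k]_{\BK_{i,j}}$, of the linear orders $\BL_{i,j,\,k'}$ for $k'<k$, followed by $(\cdot,x]_{\BL_{i,j,\,k}}$. This set is linear. Also $\min \BL_{i,j,\,k_0}$ is the root, where $k_0=\min\BK_{i,j}$. Moreover $\sum_{\BK_{i,j}}\BL_{i,j,\,k}\in\CD(\Th(\X_{i,j}))$ is a finitely axiomatizable decomposition, because $\BL_{i,j,\,k}\in\CC^{\rm rlo}_{\rm fa}$. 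So Theorem \ref{T932}(a) gives that $\X_{i,j}$ is finitely axiomatizable, that is, $\X_{i,j}\in\CC^{\rm rt}_{\rm fa}$. Theorem \ref{T932}(b) gives $\Z\equiv\X_{i,j}$ iff $\Z=\sum_{\BK_{i,j}}\Z_k$ with $\Z_k\equiv\BL_{i,j,\,k}$.

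Part (a) then follows directly from Theorem \ref{T943}(a). For (b), I combine Theorem \ref{T943}(b) with the previous characterization. If $\Y\equiv\X$, then $\Y\cong\sum_\BI\prod_{j<m_i}\Y_{i,j}$ with $\Y_{i,j}\equiv\X_{i,j}$, and each $\Y_{i,j}=\sum_{\BK_{i,j}}\BL'_{i,j,\,k}$ with $\BL'_{i,j,\,k}\equiv\BL_{i,j,\,k}$. For the converse, Fact \ref{T200}(b) gives $\sum_{\BK_{i,j}}\BL'_{i,j,\,k}\equiv\X_{i,j}$, and Theorem \ref{T943}(b) finishes. The factors must be taken with pairwise disjoint domains, which is handled as in Fact \ref{T944} via Fact \ref{T200}(a) and Fact \ref{T043}(b).

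For (c), I use Theorem \ref{T907}(b) and (c). By (b) there, $\X$ is $\o$-categorical iff every $\X_{i,j}$ is. Applying Theorem \ref{T932}(d), or directly the categoricity criterion quoted in the proof of Theorem \ref{T907}(b), to $\X_{i,j}=\sum_{\BK_{i,j}}\BL_{i,j,\,k}$ shows that this holds iff every $\BL_{i,j,\,k}$ is $\o$-categorical. For the ``otherwise'' clause I need VC$^\sharp$ for the factors, so that Theorem \ref{T907}(c) applies. The $\X_{i,j}$ are rooted FMD trees in the sense of Example \ref{EX102}: all summands are chains and $\min\BL_{i,j,\,k_0}$ exists. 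By Theorem 5.4 of \cite{Ksharp} we get $I(\X_{i,j})\in\{1,\c\}$, with value $1$ iff all $I(\BL_{i,j,\,k})=1$. Alternatively, Rubin's theorem gives VC$^\sharp$ for linear orders, and Theorem \ref{T932}(d) then gives it for $\X_{i,j}$. In either case Theorem \ref{T907}(c) yields $I(\X)\in\{1,\c\}$, so if some $\BL_{i,j,\,k}$ is not $\o$-categorical, then $I(\X)=\c$.

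The main obstacle I expect is the bookkeeping in (b). One has to ensure that the decomposition of a model $\Y_{i,j}$ returned by Theorem \ref{T932}(b) is literally indexed by $\BK_{i,j}$ (identifying $\BK_{i,j}$ with some $\la n,\le\ra$). One also has to handle degenerate cases: a factor $\X_{i,j}$ of size $1$, which is allowed in $\la\la\CC\ra_\Pi\ra_\Sigma$ but was stripped in the proof of Claim \ref{T921}, and summands $\BL_{i,j,\,k}\cong 1$. Both are harmless, since elementary equivalence with a one-element structure forces isomorphism.
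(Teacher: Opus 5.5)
Your proposal is correct and follows essentially the same route as the paper. You show $\CC\subset\CC^{\rm rt}_{\rm fa}$ via Theorem \ref{T932}(a), get (a) and (b) from Theorem \ref{T943} combined with Theorem \ref{T932}(b) at the inner level, and obtain (c) from Theorem \ref{T907}(b),(c) together with Theorem \ref{T932}(d) applied to the linear summands, which satisfy VC$^\sharp$ by Rubin; this is the second of your two options for (c) and is exactly the paper's route, while your FMD alternative via Example \ref{EX102} would also work.
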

\dok
In order to use Theorem \ref{T943} we show that $\CC \subset \CC ^{\rm rt}_{\rm fa}$.
Let $\T:=\sum _\BK \BL _k \in \CC$.
Since $\min \BL _k$ exists, for each $k\in K$,
and $\BK$ is a rooted tree, $\T$ is a rooted tree.
Also, $\T $ is an FLD$_0$ partial order
and since the summands $\BL _k$ are finitely axiomatizable,
by Theorem \ref{T932}(a) $\T$ is finitely axiomatizable.
Thus $\T\in \CC ^{\rm rt}_{\rm fa}$ and $\CC \subset \CC ^{\rm rt}_{\rm fa}$ indeed.
Consequently, $\la\la\CC\ra_\Pi\ra_\Sigma\subset\la\la\CC ^{\rm rt}_{\rm fa}\ra_\Pi\ra_\Sigma$
and, hence $\X \in \la\la\CC ^{\rm rt}_{\rm fa}\ra_\Pi\ra_\Sigma$.

(a) Since $\X \in \la\la\CC ^{\rm rt}_{\rm fa}\ra_\Pi\ra_\Sigma$ by Theorem \ref{T943}(a) $\X$ is finitely axiomatizable.

(b) For $i\in I$ and $j<m_i$ the tree $\T _{i,j}:=\sum _{\BK_{i,j}}\BL _{i,j,k}\in \CC$ is an FLD$_0$ partial order.
Since the summands $\BL _{i,j,k}$, are finitely axiomatizable,
by Theorem \ref{T932}(b) we have
\begin{equation}\label{EQ984}\textstyle
\T \equiv \T _{i,j} \;\;\mbox{ iff }\;\; \T=\sum _{\BK_{i,j}}\BL _{i,j,\,k}', \mbox{ where }\BL _{i,j,\,k}'\equiv \BL _{i,j,\,k}, \mbox{ for each }k\in K_{i,j}.
\end{equation}
Since $\X \cong \sum _{\BI }\prod _{j<m_i}\T _{i,j}$
we have $\Y \equiv \X$
iff (by Theorem \ref{T943}(b)) $\Y \cong \sum _{\BI }\prod _{j<m_i}\Y_{i,j}$, where $\Y_{i,j}\equiv \T_{i,j}$, for each $j<m_i$ and $i\in I$,
which by (\ref{EQ984}) means that $\Y_{i,j}=\sum _{\BK_{i,j}}\BL _{i,j,\,k}'$, where $\BL _{i,j,\,k}'\equiv \BL _{i,j,\,k}$, for each $k\in K_{i,j}$.
Thus $\Y \cong \sum _{\BI }\prod _{j<m_i}\sum _{\BK_{i,j}}\BL _{i,j,\,k}'$ and (b) is true.

(c) For $i\in I$ and $j<m_i$ the summands $\BL _{i,j,k}$ of the sum $\T _{i,j}=\sum _{\BK_{i,j}}\BL _{i,j,k}$
are linear orders and, hence, satisfy VC$^\sharp$.
So, by Theorem \ref{T932}(d), $I(\T _{i,j})=1$ iff $I(\BL _{i,j,k})=1$, for all $k\in K_{i,j}$; otherwise we have $I(\T _{i,j})=\c$.
Since $\X \cong \sum _{\BI }\prod _{j<m_i}\T _{i,j}$,
by Theorem \ref{T907}(b) we have $I(\X )=1$ iff $I(\T _{i,j})=1$, for all $i\in I$ and $j<m_i$,
iff $I(\BL _{i,j,k})=1$, for all indices.
Otherwise, by Theorem \ref{T907}(c), $I(\X )=\c$.
\hfill $\Box$
\begin{ex}\label{EX103}
Application of Theorem \ref{T940}. \rm
By Facts \ref{T950}(b) and \ref{T941}
taking arbitrary summands $\BL_{i,j,k}$ from the class $\M _2\cup\Ord _{<\o^\o} \cup \Ord _{<\o^\o}^*$
we obtain finite axiomatizability, representation of models and VC$^\sharp$ for $\Th (\X)$,
where  $\X \cong \sum _{\BI }\prod _{j<m_i}\sum _{\BK_{i,j}}(\BL _{i,j,\,k})_r$.
In particular, we can take $\BL_{i,j,k}$-s from the class $\M _1$ of countable  $\o$-categorical linear orders.
\end{ex}

{\footnotesize

\end{document}